\numberwithin{equation}{section}
\newtheorem{theorem}{Theorem}[section]
\newtheorem{lemma}[theorem]{Lemma}
\newtheorem{definition}{Definition}
\theoremstyle{remark}
\newtheorem{remark}{Remark}
\newcommand{\be}{\begin{equation}}
\newcommand{\ee}{\end{equation}}
\newcommand{\bm}{\boldsymbol}
\DeclareMathOperator*{\argmin}{\arg\min}
\def\calA{\mathcal{A}}
\def\calC{\mathcal{C}}
\def\calM{\mathcal{M}}
\def\calP{\mathcal{P}}
\def\calV{\mathcal{V}}
\def\Conv{\mbox{Conv}}
\def\LMO{\mbox{LMO}}
\def\SFW{\mbox{SFW}}
\def\rSFW{\mbox{rSFW}}
\def\SLMO{\mbox{SLMO}}
\def\bfzero{{\bm 0}}
\def\bfone{{\bm 1}}
\def\bfb{{\bm b}}
\def\bfc{{\bm c}}
\def\bfd{{\bm d}}
\def\bfe{{\bm e}}
\def\bfp{{\bm p}}
\def\bfg{{\bm g}}
\def\bfr{{\bm r}}
\def\bfu{{\bm u}}
\def\bfv{{\bm v}}
\def\bfx{{\bm x}}
\def\bfy{{\bm y}}
\def\bfz{{\bm z}}
\def\bflambda{\bm{\lambda}}
\begin{document}

\title{Simplex Frank-Wolfe: Linear Convergence and Its Numerical Efficiency for Convex Optimization over Polytopes 
%\thanks{Grants or other notes 
%about the article that should go on the front page should be
%placed here. General acknowledgments should be placed at the end of the article.}
}

\author[1]{Haoning Wang}
\author[2]{Houduo Qi \thanks{Corresponding author: 
houduo.qi@polyu.edu.hk}}
\author[1]{Liping Zhang}
\affil[1]{Department of Mathematical Sciences, Tsinghua University, Beijing 
100084, China}
\affil[2]{Department of Data Science and Artificial Intelligence, and Department of Applied Mathematics, The Hong Kong Polytechnic University, Hong Kong}

\date{}
% The correct dates will be entered by the editor

\maketitle

\begin{abstract}
We investigate variants of the Frank-Wolfe (FW) algorithm for smoothing and strongly convex optimization over polyhedral sets, with the goal of designing algorithms that achieve linear convergence while minimizing per-iteration complexity as much as possible.
Starting from the simple yet fundamental unit simplex, and based on geometrically intuitive motivations, we introduce a novel oracle called Simplex Linear Minimization Oracle (SLMO), which can be implemented with the same complexity as the standard FW oracle.
We then present two FW variants based on SLMO: Simplex Frank-Wolfe and the refined Simplex Frank-Wolfe (rSFW).
Both variants achieve a linear convergence rate for all three common step-size rules.
Finally, we generalize the entire framework from the unit simplex to arbitrary polytopes.
Furthermore, the refinement step in rSFW can accommodate any existing FW strategies such as the well-known away-step and pairwise-step, leading to outstanding numerical performance.
We emphasize that the oracle used in our rSFW method requires only one more vector addition compared to the standard LMO, resulting in the lowest per-iteration computational overhead among all known Frank-Wolfe variants with linear convergence.

{\bf keywords:} Frank-Wolfe algorithm, conditional gradient methods, linear convergence, convex optimization, first-order methods, linear programming
\end{abstract}

\section{Introduction}

Over the past decades, Frank-Wolfe (FW) algorithms \cite{frank1956algorithm} (a.k.a. conditional gradients \cite{levitin1966constrained}) have been extensively investigated due to its lower per-iteration complexity compared to projected or proximal gradient-based methods, in particular for large-scale machine learning applications and sparse optimization.
This topic has been comprehensively covered in several recent publications including
\cite{bomze2021frank,braun2022conditional,pokutta2024frank} and
\cite[Chapter~7]{lan2020first},\cite[Chapter~10]{gartner2023optimization}, to just name a few.
The key step in FW algorithms is Linear Minimization Oracle (LMO).
%which is supposed to be
%less expensive than the corresponding projected/proximal gradient methods.
We refer to \cite{lan2013complexity}
for (worst-case) complexity analysis for general LMOs.
One of the most often cited examples is LMO over the unit Simplex
$S_n := \left\{ \bfx \in \mathbb{R}^n \vert \ \sum x_i = 1, \bfx\ge 0  \right\}$.
Projection onto $S_n$ is much expensive than LMO over $S_n$.
Research effort has been on developing LMOs that may lead to linear convergence while
keeping the computation of each LMO as low as possible. Therefore, the total
computational complexity of a typical FW-type algorithm can be calculated as follows.
\[
  \mbox{Total Computation}
  = (\# \mbox{Iterations}) \times \mbox{(Computation of LMOs per iteration)}.
\]
Note that some existing algorithms may require more than one LMO each iteration.
The purpose of this paper is to propose a new LMO, whose computational complexity is probably the
cheapest among all existing algorithms. Furthermore, it also guarantees a linear convergence rate
comparable to the known ones for the convex optimization over a polytope:
\be \label{Problem: general}
%\begin{aligned}
	\min \ f(\bfx) \qquad
	\mbox{s.t.} \quad  \bfx\in\calP = \Conv(\mathcal{V}),
%\end{aligned}
\ee
where $\mathcal{V}\subseteq \mathbb{R}^n$ is a \textit{finite} set of vectors that we call \textit{atoms}.
%\footnote{The atoms \textit{do not} have to be extreme points of $\calP$.}.
For the moment, we only assume $f: \calC \mapsto \mathbb{R}$ is convex and differentiable for the
convenience of discussion below.
Here, $\calC$ is an open set containing $\calP$. Later, we will enforce strong convexity as well
as other properties for our analysis.

\subsection{Related Work}\label{Section: Related_work}

There are a large number of publications that directly or remotely motivated this work.
We are only able to list a few of them below with some critical analysis.
Given an LMO, the original FW algorithm \cite{frank1956algorithm} states as:
\[
 %\mbox{(FW Method)} \quad
 \left\{
   \begin{array}{l}
   		 \bfy_k = \LMO(\nabla f(\bfx_{k-1}), \calP) \in \argmin\left\{
   		  \langle \nabla f(\bfx_{k-1}), \bfy \rangle \ \vert \ \bfy \in \calP
   		 \right\}, \\ [0.2ex]
   		 \bfx_k = (1-\delta_k)\bfx_{k-1} + \delta_k \bfy_k, \ \
   		 \delta_k \in (0, 1],
   	\end{array}
 \right .
\]
where $\delta_k$ is a steplength often satisfying certain conditions \cite{clarkson2010coresets,hazan2008sparse,jaggi2013revisiting}.
One of the key advantages of the FW method over the well-known projected gradient method is its lower cost per iteration in many common scenarios, such as the simplex \cite{clarkson2010coresets}, flow polytope \cite{combettes2021complexity,joulin2014colocalization}, spectrahedron \cite{hazan2008sparse,garber2016faster}, and nuclear norm ball \cite{jaggi2010simple}. This efficiency makes the FW method particularly advantageous for large-scale problems.
Numerous studies \cite{jaggi2013revisiting,lan2013complexity,freund2016new} have demonstrated that the convergence rate of the FW method is $\mathcal{O}(\frac{1}{k})$ and that this rate is generally not improvable,
except for some special cases, e.g., when the optimal solution lies in the interior of the constraint set \cite{guelat1986some}.
In fact, there exist examples for which the convergence rate of the FW method does not improve even when the objective function is strongly convex, see \cite{jaggi2013revisiting,lan2013complexity}.
%In contrast, it is well known that \cite{nesterov2013introductory} the projected gradient method can achieve linear convergence under the assumption of a strongly convex objective function.

Therefore, modifications on the original FW method must be made in order to achieve linear convergence
rate. Significant advances have been made alone this line of research and
there exist a large number of variants of FW methods that enjoy linear convergence rate, see
\cite[Chapter 3]{braun2022conditional}.
The well-known ones include FW-method with away-step (AFW)
 and the pairwise FW (PFW) \cite{lacoste2015global,damla2008linear,garber2013playing}.
Most of those modified methods can be cast in the following framework:
\be \label{Modified-FW}
\left\{
 \begin{array}{l}
 	 \bfy_k = \LMO(\nabla f(\bfx_{k-1}), \calP_k) \in \argmin\left\{
 	 \langle \nabla f(\bfx_{k-1}), \bfy \rangle \ \vert \ \bfy \in \calP_k
 	 \right\}, \\ [0.2ex]
 	 \bfg_k = \mbox{direction-correction satisfying certain conditions}, \\ [0.2ex]
 	 \bfx_k = (1-\delta_k)\bfx_{k-1} + \delta_k ( \bfy_k + \bfg_k), \ \
 	 \delta_k \in (0, 1],
 \end{array}
\right .
\ee
where $\calP_k \subseteq \calP$ is a well-constructed convex subset of $\calP$ at the current iterate $\bfx_{k-1}$.
This framework has a flexibility for more technical strategies to be added. For example,
one may mix $\bfy_k$ and $\bfg_k$ through certain combinations with some linesearch strategies.
Both AFW and PFW make use of such flexibility.
One major concern is that the computation of LMO over $\calP_k$ may be significantly higher than
that over $\calP$. % as $\calP_k$ is a subset. 
This is the case when $\calP$ is
Simplex-like polytopes including $S_n$.

In a significant development aiming to address this issue, Garber and Hazan \cite{garber2016linearly}
proposed the methodology of LLOO (Local Linear Optimization Oracle), where $\calP_k$ is
the intersection of $S_n$ and a $\ell_1$-ball: % if $\calP=S_n$:
\[
  \calP_k = S_n \cap B_1(\bfx_{k-1}, d_k), \quad \mbox{with}\quad
  B_1(\bfx, d) = \left\{
   \bfy \ | \ \| \bfx - \bfy \|_1 \le d
  \right\}.
\]
A key result is that LMO over this $\calP_k$ is LLOO, which is referred to as $\ell_1$-LMO.
Hence, linear convergence follows when
the radius is exponentially reduced at each iteration under the strongly convex setting.
We note that the framework of LLOO can be cast as a special case of
Shrinking Conditional Gradient Methods (sCndG) by Lan \cite[Eq.~3.34]{lan2013complexity} and \cite[Alg.~7.2]{lan2020first},
where an arbitrary norm is used.
The LLOO framework does not require
the step of $\bfg_k$ in \eqref{Modified-FW}.
%A practical implementation is also provided in \cite{garber2016linearly} and we refer to it as
%$\ell_1$-LMO for easy reference.
To understand its actual performance, Fig.~\ref{Fig: LMO_compare_Simplex} in Sect.~\ref{Section-Numerical} illustrates its computational time
in comparison to the LMO over the unit simplex as well a projection algorithm.

It can be clearly observed that the time taken by
$\ell_1$-LMO is roughly same as the projection method, but is significantly slower (e.g.,$100\times$ slower when $n$ gets big) than $\LMO(\bfc, S_n)$.
There is a deep reason behind this performance and it can be best appreciated from
the perspective of geometric intuition by considering the situation of $n=3$.
Fig.~\ref{Fig: geometric_l1} illustrates the intersection of $\ell_1$-ball with the unit simplex.
Note that for any point $\bfx\in S_3$, the intersection of $\ell_1$-ball with the hyperplane containing $S_3$ forms a regular hexagon.
As the center $\bfx$ and radius $d$ vary, the shape corresponding to the intersection of this regular hexagon and unit simplex becomes more complex, as shown by the blue region in Fig.~\ref{Fig: geometric_l1}.
This increased complexity of the constraint set makes solving $\ell_1$-LMO more challenging.
From a computational point of view, $\ell_1$-LMO requires a sorting procedure \cite{garber2016linearly} to handle the complexity and hence takes up
too much time.
%For the practical LMO provided by \cite{garber2016linearly} over the $\ell_1$-ball, although a single
%call of LMO over the simplex $S_n$ is needed, it is its sorting procedure that takes up too
%much time.

%We mention in passing that, aside from establishing linear convergence guarantees, the work of \cite{garber2021frank} also proposed a Frank-Wolfe variant called NEP-FW, which utilizes a modified LMO known as the Nearest Extreme Point Oracle (NEP) and achieves sublinear convergence.
%While this method can attain linear convergence when combined with a \textit{fully-corrective} strategy, it comes at the cost of solving an additional, sophisticated multivariate step-size optimization problem at each iteration.

We also observe that LLOO/sCndG framework was largely omitted from the recent surveys \cite{bomze2021frank,braun2022conditional,pokutta2024frank} probably due to the following two reasons.
One is on the concern of computational cost per iteration discussed above.
The other is that there lacks flexibility of incorporating existing accelerating strategies such as Away-steps.
In this paper, we propose a new framework of constructing the subset $\calP_k$ that is not based on any norms.
In the meantime, the computational cost per iteration is reduced probably to minimum and there is flexibility
to include various acceleration strategies. We explain our framework below.

\begin{figure}[t]
    \centering
    \subfloat[The intersection of $\ell_1$-norm ball with the unit simplex.]{
        \label{Fig: geometric_l1}
        \includegraphics[width=0.45\linewidth]{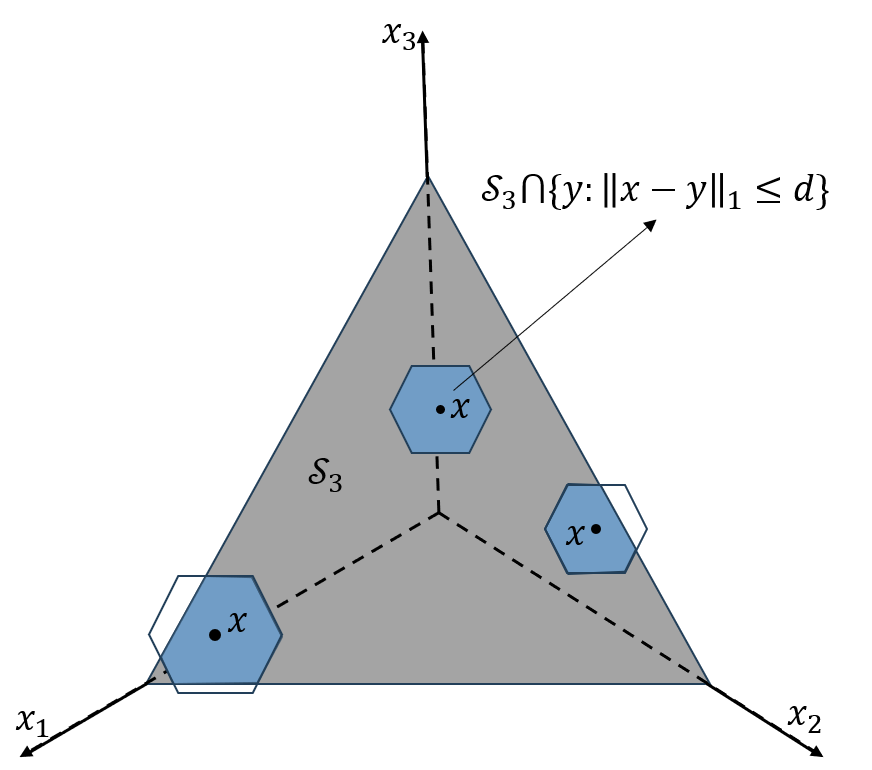}
    }
    ~
    \subfloat[The intersection of the simplex ball with the unit simplex.]{
        \label{Fig: geometric_simplex}
        \includegraphics[width=0.45\linewidth]{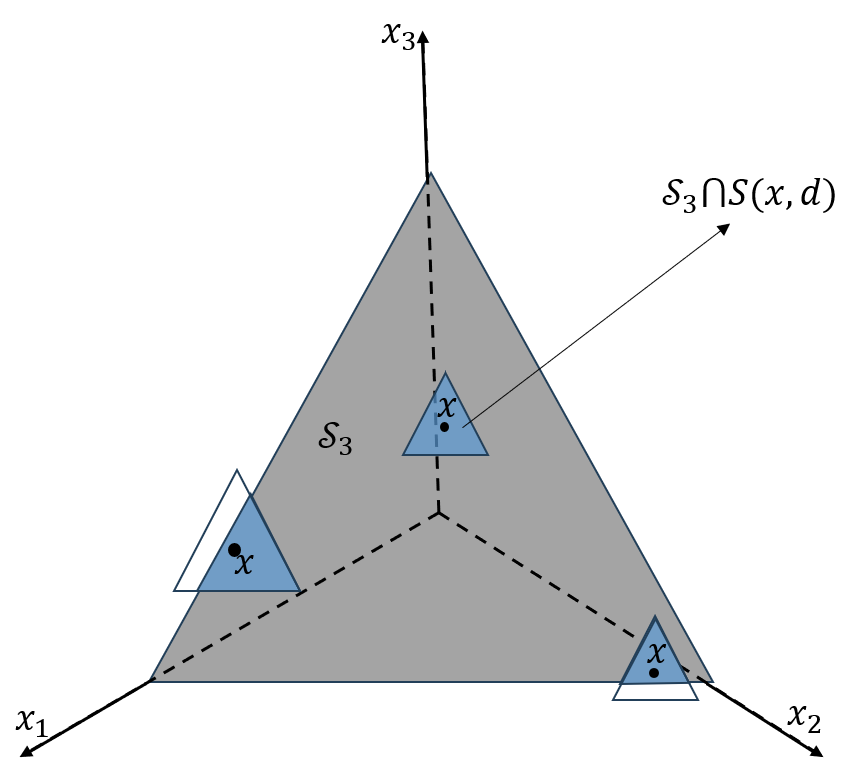}
    }
    \caption{Schematic diagram of the feasible sets for solving $\ell_1$-LMO and SLMO when $n=3$.}
    \label{Fig: geometric}
\end{figure}

%%%%%%%%%%%%%%%%%%%%%%%%
\subsection{Simplex LMO and Simplex FW: a New Proposal}

Ideally, we would like our subset $\calP_k$ to be like the simplex $S_n$ so that linear optimization
over it can be fast executed.
We here introduce the {\em simplex ball} $S(\bfx, d)$ with centroid $\bfx$ and radius $d>0$ (detailed definition later).
It coincides with the \textit{atom norm} of the unit simplex as introduced by \cite{Chandrasekaran2012}. Moreover, the unit simplex $S_n$ is a simplex ball.
A very useful property is that the intersection of two simplex balls is again a simplex ball:
\[
  S(\bfx_1, d_1) \cap S(\bfx_2, d_2) = S(\bfx_3, d_3),
\]
where $\bfx_3$ and $d_3$ can be cheaply computed from $(\bfx_i, d_i)$, $i=1,2$.
This property is illustrated in  Fig.~\ref{Fig: geometric_simplex}.
Consequently, given $\bfx \in S_n$, a radius $d>0$ and $\bfc \in \mathbb{R}^n$, we define the Simplex Linear Minimization Oracle $\SLMO(\bfx, d, \bfc)$ by
\[
  \SLMO(\bfx, d, \bfc) = \argmin_\bfy \Big\{
   \langle \bfc, \bfy \rangle \ \vert \ \bfy \in S_n \cap S(\bfx, d)
  \Big\} .
\]
Since the constraint is again a simplex ball, $\SLMO$ has a closed-form formula
(see Alg.~\ref{Alg: SLMO}) and its
complexity is roughly same as $\LMO(\bfc, S_n)$.
Furthermore, we prove that $\SLMO$ is LLOO in Lemma \ref{Lemma-SLMO-LLOO}.
The consequence is that linearly convergent algorithm can be developed by following the template in
\cite{garber2016linearly}.
This part forms the first contribution of the paper.

Casting SLMO as an instance of LLOO does not benefit too much in terms of computational efficiency
because, as a common practice in FW methods, direction-correction step in \eqref{Modified-FW}
is essential in improving numerical performance.
To accommodate this need, we make two additions. 
The first one is on a flexible rule to update the radius of the simplex ball.
Any lower-bound for the objective function $f$ is permitted and the lower-bound by SLMO 
is a choice. We opt for the use of the best lower-bound available at the current iterate.   
This leads to the Simplex Frank-Wolfe (SFW) method  in Alg.~\ref{Alg: SFW},
which is proved to be linearly convergent in Thm.~\ref{Thm: Convergence-SL1}.

%As it is well-known, FW methods generate what is called FW gap each iteration. In other words,
%a lower bound for the optimal function value is a by-product of FW methods.
%Those quantities of FW gaps and the lower bounds play an important role in their convergence
%analysis \cite{freund2016new}. In this paper, we use the best lower bound available to reduce the
%radius of the simplex ball. This leads to our method Simplex Frank-Wolfe (SFM) in Alg.~\ref{Alg: SFW},
%which is proved to be linearly convergent in Thm.~\ref{Thm: Convergence-SL1}.
%This may be regarded as our second contribution.

This second addition makes use of an important observation that SLMO can be split into two parts.
The first part is the construction of the simplex ball and the second part is linear optimization over the
simplex ball. Linear optimization is much cheaper than construction of the simplex ball.
It would be much economical if we perform linear optimization a few more times for each simplex ball:
%Since we have established that SLMO is LLOO, the updating scheme on the radius $d$ based on the
%FW gaps computed so far ensures that the optimal solution $\bfx^* \in S(\bfx_{k-1}, d_{k-1})$.
%We note that the constrained region $S_n \cap S(\bfx_{k-1}, d_{k-1})$ is another simplex ball.
%Instead of applying just one call of SLMO to this region, we may compute an approximate solution over this simplex ball (e.g., by calling a few more SLMO):
\[
  \bfp_k \approx \argmin_{\bfy} \Big\{
    f(\bfy) \ | \ \bfy \in S_n \cap S(\bfx_{k-1}, d_{k-1})
  \Big\} .
\]
This $\bfp_k$ functions like the direction-correction used in the general framework \eqref{Modified-FW}.
This allows us to take advantage of existing FW algorithms. For example,
AFW and PFW can be used for this part.
This leads to our refined SFW method (rSFW) (see Alg.~\ref{Alg: rSFW} and Alg.~\ref{Alg: rSFW-P}).
%We will work out the number of steps needed for this subproblem to ensure the overall linear complexity of the method, which is denoted as rSFM (refined SFW, see Alg.~\ref{Alg: rSFW} and Alg.~\ref{Alg: rSFW-P}).
We emphasize that the oracle used in our rSFW method requires only one additional vector addition compared to the standard LMO, whose computational complexity is probably the cheapest among all existing FW-type algorithms.
Our numerical experiments show that rSFW with Away step and Pairwise steps improves its performance
significantly.
The resulting algorithmic scheme is hence different from LLOO scheme and we provide complete convergence analysis.
This part may be treated as our second contribution.

Our third contribution is on extending the simplex case to general polytope case.
We will make use of some
fundamental connections between them
established in \cite{garber2016linearly}.
Since the simplex ball is not defined from any norm, some part of the extension
is highly non-trivial. In particular, the iteration complexity of the extended SFW depends only on the 
problem dimension $n$ instead of the number of extreme points $N$ of $\calP$, see Thm.~\ref{Thm: Convergence_sFW_P}.
Computationally, this can all be achieved for three popular polytopes: Hypercube, Flow polytope and $\ell_1$-ball.

Our final part is to address the implementation issues including adaptive backtracking techniques on choosing the
problem prameters $L$ and $\mu$, and incorporating Away-FW and Pairwise-FW steps to SFW methods. 
Numerical experiments demonstrate that our SFW methods are highly competitive.

%%%%%%%%%%%%%%%%%%%%%%%%%%%%%%%%%%
\subsection{Organization}

In the preceding discussion, we only focus on the framework \eqref{Modified-FW} that may lead to
linearly convergent FW methods. We avoid specifying the actual conditions on $f$ because various conditions can ensure such linear rate.
In Section~\ref{Section:Background}, we describe such conditions as well as some background on polytopes. We will explain the key concept of LLOO proposed in \cite{garber2016linearly}.
Section~\ref{Section-SFW} contains the detailed development of SLMO and the resulting Simplex FW methods
(SFW and rSFW) for the case $\calP= S_n$.
The extension to the general polytope case is conducted in Section~\ref{Section: gen_P}.
Lengthy proofs are moved to the Supplement for the benefit of readability of the paper.
Section~\ref{Section-Numerical} reports some illustrative examples to demonstrate the
advantage of SFW methods over some existing algorithms.
We conclude the paper in Section~\ref{Section-Conclusion}.

\section{Notation and Background} \label{Section:Background}

\subsection{Notation}
We employ lower-case letters, bold lower-case letters, and capital letters to denote scalars, vectors, and matrices, respectively (e.g., $x,\bfx$ and $X$).
For two column vectors $\bfx, \bfy \in \mathbb{R}^n$, $\max\{\bfx, \bfy\}$ is a new
column vector that takes the component-wise maximum of $\bfx$ and $\bfy$.
The vector $\min\{\bfx, \bfy\}$ is similarly defined.
For vectors, we denote the standard Euclidean norm by $\|\cdot\|$ and the standard inner-product by $\langle\cdot,\cdot\rangle$.
For  a vector $\bfx \in \mathbb{R}^n$, a subset $C \subseteq \mathbb{R}^n$, and $\tau >0$, we define
\[
  \bfx + C := \left\{
   \bfx + \bfy \ | \ \bfy \in C
  \right\} \qquad \mbox{and} \qquad
  \tau C := \left\{
   \tau \bfy  \ | \ \bfy \in C
  \right\},
\]
where ``$:=$'' means ``define''.

We let $\mathbb{B}(\bfx, r)$ to denote the Euclidean ball of radius $r$ centered at $\bfx$.
For matrices, we let $\|\cdot\|$ denote the spectral norm.
For a vector $\bfx\in\mathbb{R}^n$, we use $x_i$ or $x(i)$ to denote the $i$-th component.
For a matrix $A$, we use $A(i)$ to denote the $i$-th row of $A$.
The vector $\bfone_n$ represents a vector with all entries equal to 1, and
 $\bfe_i$ is the standard $i$th unit vector in $\mathbb{R}^n$ which takes value $1$ at its $i$th position and
 $0$ elsewhere.
Given a set $\mathcal{V}$, we denotes its convex hull as $\Conv\{\mathcal{V}\}$.
For any positive integer $n$, we use the notation $[n]$ to represent the set $\{1,\dots,n\}$.
We use $S_n:=\{\bfx\in\mathbb{R}^n\vert \sum_{i=1}^nx_i=1, \bfx\geq 0 \}$ to denote the unit simplex.

\subsection{Smoothness, Strong Convexity and Stepsizes}

Throughout the paper, we will assume $L$-smoothness and $\mu$-strong convexity of $f$.

\begin{definition}[Smooth function]
    We say that a function $f(\bfx):\mathbb{R}^{n}\to \mathbb{R}$ is $L$-smooth over a convex set $\calP\subset \mathbb{R}^n$, if for every $\bfx,\bfy\in\calP$ there holds
    \begin{equation*}
        f(\bfy)\leq f(\bfx)+\langle \bfy-\bfx,\nabla f(\bfx)\rangle + \frac{L}{2}\|\bfx-\bfy\|^2.
    \end{equation*}
\end{definition}
\begin{definition}[Strongly convex function]
    We say that a function $f(\bfx):\mathbb{R}^{n}\to \mathbb{R}$ is $\mu$-strongly convex over a convex set $\calP\subset \mathbb{R}^n$, if for every $\bfx,\bfy\in\calP$ there holds
    \begin{equation*}
        f(\bfy)\geq f(\bfx)+\langle \bfy-\bfx,\nabla f(\bfx)\rangle + \frac{\mu}{2}\|\bfx-\bfy\|^2.
    \end{equation*}
\end{definition}
The above definition combined with first order optimality conditions imply that for a $\mu$-strongly convex function $f$, if $\bfx^*=\argmin_{\bfx\in\calP}f(\bfx)$, then for any $\bfx\in\calP$ we have
\begin{equation}\label{Eq: strongly_convex_property}
  f(\bfx)-f^* \ge \frac{\mu}2  \|\bfx-\bfx^*\|^2 .
\end{equation}
This property, while weaker than strong convexity, is essential for demonstrating linear convergence rather than relying solely on strong convexity. A natural generalization of this property is known as the quadratic growth property.

There are three popular step size strategies:
\begin{enumerate}
    \item Simple step size:
        \begin{equation}\label{Eq: step size_const}
            \delta_k = 2/(k+1),\quad k=1,\dots.
        \end{equation}
    \item Line-search step size:
        \begin{equation}\label{Eq: step size_line}
            \delta_k = \operatorname*{arg\,min}_{\delta\in[0,1]}f((1-\delta)\bfx_{k-1}+\delta\bfy_{k}),\quad k=1,\dots.
        \end{equation}
    \item Short step size:
        \begin{equation}\label{Eq: step size_smooth}
            \delta_k = \min\left\{1, \frac{\langle \nabla f(\bfx_{k-1}) ,\bfx_{k-1}-\bfy_k\rangle}{L\|\bfx_{k-1}-\bfy_k\|^2}\right\},\quad k=1,\dots.
        \end{equation}
\end{enumerate}

For the three step size strategies described above, it can be shown that the standard Frank-Wolfe method exhibits the following convergence rates. For a detailed proof, refer to the modern surveys by \cite{jaggi2013revisiting}, \cite{lan2013complexity} or \cite{freund2016new}.

\begin{theorem}\label{Thm: FW}
    Let $\{\bfx_k\}$ be the sequences generated by standard FW method with step size policy for $\{\delta_k\}$ in \eqref{Eq: step size_const}, \eqref{Eq: step size_line}, or \eqref{Eq: step size_smooth}. Then, for $k\geq 1$, we have
    \begin{equation}\label{Eq: convergence_FW}
        f(\bfx_k) -f^*\leq \langle\nabla f(\bfx_k),\bfx_k-\bfy_{k+1} \rangle\leq 2L D^2/(k+1),
    \end{equation}
    where $\bfy_{k+1}=\text{LMO}(\nabla f(\bfx_k),\calP)$ and $D$ is the diameter of $\calP$.
\end{theorem}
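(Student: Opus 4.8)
The plan is to prove the two inequalities separately: the left one is an immediate consequence of convexity and the defining optimality of the LMO, while the right one is obtained from a smoothness-based descent recursion that is then closed by induction for each of the three step-size rules. Throughout I write $h_k := f(\bfx_k) - f^*$ for the primal suboptimality and $g_k := \langle \nabla f(\bfx_k), \bfx_k - \bfy_{k+1} \rangle$ for the Frank-Wolfe gap.

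For the first inequality, convexity of $f$ gives $f^* = f(\bfx^*) \ge f(\bfx_k) + \langle \nabla f(\bfx_k), \bfx^* - \bfx_k \rangle$, hence $h_k \le \langle \nabla f(\bfx_k), \bfx_k - \bfx^* \rangle$. Since $\bfy_{k+1}$ minimizes $\langle \nabla f(\bfx_k), \cdot \rangle$ over $\calP \ni \bfx^*$, we have $\langle \nabla f(\bfx_k), \bfy_{k+1} \rangle \le \langle \nabla f(\bfx_k), \bfx^* \rangle$, so $\langle \nabla f(\bfx_k), \bfx_k - \bfx^* \rangle \le g_k$. Chaining these two facts yields $h_k \le g_k$, the left inequality.

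For the right inequality, the core tool is the descent estimate obtained by applying $L$-smoothness to the update $\bfx_{k+1} = \bfx_k + \delta_{k+1}(\bfy_{k+1} - \bfx_k)$ and bounding $\|\bfy_{k+1} - \bfx_k\| \le D$:
\[
h_{k+1} \le h_k - \delta_{k+1} g_k + \tfrac{L}{2}\delta_{k+1}^2 D^2 \le (1-\delta_{k+1}) h_k + \tfrac{L}{2}\delta_{k+1}^2 D^2,
\]
where the last step uses $g_k \ge h_k$. For the simple step $\delta_{k+1} = 2/(k+2)$, I would induct: the base case $k=1$ follows from $\delta_1 = 1$ (so $h_1 \le \tfrac{L}{2}D^2 \le LD^2$), and assuming $h_k \le 2LD^2/(k+1)$ the recursion gives $h_{k+1} \le 2LD^2/(k+2)$ after elementary estimates such as $\tfrac{1}{k+2} \le \tfrac{1}{k+1}$. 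For line-search, the actual decrease dominates that of any fixed $\delta$, so the simple-step bound transfers verbatim; for the short step I would instead use its interior closed form $\delta_{k+1} = g_k/(L\|\bfy_{k+1}-\bfx_k\|^2)$, which yields the quadratic recursion $h_{k+1} \le h_k - g_k^2/(2LD^2) \le h_k - h_k^2/(2LD^2)$, closed by inverting to $1/h_{k+1} \ge 1/h_k + 1/(2LD^2)$.

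The hard part will be the passage from the primal rate to the gap bound in the middle term, since the Frank-Wolfe gap $g_k$ is not monotone and cannot be controlled pointwise by a naive induction. I would handle it by rearranging the descent inequality into $\delta_{k+1} g_k \le h_k - h_{k+1} + \tfrac{L}{2}\delta_{k+1}^2 D^2$ and telescoping over a window of iterations, then combining with the already-established bound $h_k \le 2LD^2/(k+1)$ to extract the claimed $O(1/(k+1))$ control on the gap. This is precisely the step where the step-size-dependent constants enter and where care is needed to match the stated constant $2LD^2$, so I expect it to be the main technical obstacle.
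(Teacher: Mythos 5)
The paper offers no proof of this theorem at all: it states the result and defers to the cited surveys (Jaggi; Lan; Freund--Grigas), so your attempt can only be judged against the standard arguments in those references. Against that benchmark, your first two components are correct: the left inequality $h_k \le g_k$ via convexity plus LMO optimality is exactly the standard argument, and your primal-rate induction $h_k \le 2LD^2/(k+1)$ (including the observation that exact line search dominates any fixed step, and the reciprocal recursion for the short step) is sound; the constants check out.

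The genuine gap is precisely where you flagged it, and your proposed fix does not close it. The right inequality of \eqref{Eq: convergence_FW} is a \emph{pointwise} bound on the Frank--Wolfe gap at every iteration $k$, but telescoping $\delta_{k+1} g_k \le h_k - h_{k+1} + \tfrac{L}{2}\delta_{k+1}^2 D^2$ over a window only controls a weighted average of the gaps, hence at best $\min_{j \in \text{window}} g_j$. That is the Jaggi-style statement ``there exists $\hat{k}\le K$ with $g_{\hat{k}} \le C\,LD^2/(K+1)$'' (and the constant $C$ one gets this way exceeds $2$), not a bound on $g_k$ for each $k$. Nothing in your outline converts the window minimum into a pointwise statement, and the natural pointwise estimate from the one-step descent inequality, $g_k^2 \le 2LD^2(h_k - h_{k+1}) \le 2LD^2 h_k$, only yields $g_k = O\bigl(LD^2/\sqrt{k}\bigr)$, which is too weak. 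You should also be aware that the references the paper cites prove exactly the min-over-window version (with constants worse than $2LD^2$), not the pointwise inequality written in \eqref{Eq: convergence_FW}; so the middle term of the theorem, read literally for all three step-size rules (in particular the agnostic rule \eqref{Eq: step size_const}, under which $f(\bfx_k)$ is not even monotone), is stronger than what the literature supplies, and no amount of care with constants in your telescoping step will ``extract'' it. A defensible completion of your proof would be to establish the left inequality and the primal rate as you did, and then prove the gap bound in the min-over-window form --- which is also all that the paper actually uses downstream (in the inner-loop termination arguments), at least for the monotone step-size rules.
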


%For the three step size strategies described above, it can be shown that the standard Frank-Wolfe method exhibits the following convergence rates. For a detailed proof, refer to the modern surveys by \citet{jaggi2013revisiting}, \citet{lan2013complexity} or \citet{freund2016new}. We present a short proof of this theorem in Appendix \ref{Section: app_proof_FW} for completeness.
%\begin{theorem}\label{Thm: FW}
%    Let $\{\bfx_k\}$ be the sequences generated by Alg.~\ref{Alg: FW} with step size policy for $\{\delta_k\}$ in \eqref{Eq: step size_const}, \eqref{Eq: step size_line}, or \eqref{Eq: step size_smooth}. Then, for $k\geq 1$, we have
%    \begin{equation}\label{Eq: convergence_FW}
%        f(\bfx_k) -f^*\leq f(\bfx_k) -B_k\leq \frac{2LD^2}{k+1},
%    \end{equation}
%    where $D$ is the diameter of $\calP$.
%\end{theorem}

\subsection{Quantities of Polytope}\label{Subsection: Quantity_Polytope}

The quantities reviewed in this part are well defined and investigated in \cite{garber2016linearly}
and they are mainly used in the extension of \SFW\ to polytopes.

Let $\calP$ be a polytope described by linear equations and inequalities, specifically $\calP=\{\bfx\in\mathbb{R}^n| A_1\bfx=\bfb_1, A_2\bfx\leq \bfb_2\}$, where $A_2\in\mathbb{R}^{m\times n}$.
Without loss of generality, we assume that all rows of $A_2$ have been scaled to possess a unit $l_2$ norm.
We denote the set of vertices of $\calP$ as $\mathcal{V}(\calP)$ and let $N=|\mathcal{V}(\calP)|$ represent the number of vertices.

Next, we introduce several geometric parameters related to $\calP$ that will naturally arise in our analysis.
The Euclidean diameter of $\calP$ is defined as $D(\calP)=\max_{\bfx,\bfy\in\calP}\lVert \bfx-\bfy\rVert$.
We define
\begin{equation*}
    \xi(\calP)=\min_{\bfv \in \mathcal{V}(\calP)}\left(\min \left\{{b}_2(j)-A_2(j) \bfv \mid j \in[m], A_2(j) \bfv<b_2(j)\right\}\right).
\end{equation*}
This means that for any inequality constraint defining the polytope and for a given vertex, that vertex either satisfies the constraint with equality or is at least $\xi(\calP)$ units away from satisfying it with equality.
Let $r(A_2)$ denote the row rank of $A_2$, and let $\mathbb{A}(\calP)$ represent the set of all $r(A_2)\times n$ matrices with linearly independent rows selected from the rows of $A_2$.
We then define $\psi(\calP)=\max _{M \in \mathbb{A}(\calP)}\|M\|$. Finally, we introduce condition number of $\calP$ as
\begin{equation}\label{eq: eta_def}
    \eta(\calP)=\psi(\calP)D(\calP)/\xi(\calP).
\end{equation}
It is important to note that the translation, rotation and scaling of the polytope $\calP$ are invariant to $\eta(\calP)$.
For convenience we use $\mathcal{V},D,\xi,\psi,\eta$ without explicitly mentioning the polytope when $\calP$ is clear from context.
It is worth noting that in many relevant scenarios---particularly in cases where efficient algorithms exist for linear optimization over the given polytope---estimating the parameters $D,\xi,\psi$ is often straightforward.
This is particularly true in convex domains encountered in combinatorial optimization, such as flow polytopes, matching polytopes, and matroid polytopes, among others.
Furthermore, our algorithm relies primarily on the parameter $\eta$ and $D$.

%%%%%%%%%%%%%%%%%%%%%%%%%%%%%%%
\subsection{LLOO}

A major concept proposed by Garber and Hazan \cite[Def.~2.5]{garber2016linearly} is LLOO.
Consider the problem \eqref{Problem: general}. We say a procedure $\calA(\bfx, d, \bfc)$, where
$\bfx \in \calP$, $d>0$, $\bfc \in \mathbb{R}^n$, is an LLOO with parameter $\rho\ge 1$ for polytope
$\calP$ if $\calA(\bfx, d, \bfc)$ returns a feasible point $\bfp \in \calP$ such that
\begin{itemize}
	\item[(i)] $\langle \bfy, \bfc \rangle \ge \langle \bfp, \bfc \rangle$
	for all $\bfy \in \mathbb{B}(\bfx, d) \cap \calP$, and
	\item[(ii)] $\| \bfx - \bfp \| \le \rho d$.
\end{itemize}

Suppose the optimal solution $\bfx^*$ of \eqref{Problem: general} is contained in
$\mathbb{B}(\bfx, d)$ and LLOO $\calA(\bfx, d, \nabla f(\bfx))$ return a
feasible point $\bfp \in \calP$. The convexity of $f$ implies the following.
\begin{align*}
	f(\bfx^*) & \ge f(\bfx) + \langle \nabla f(\bfx), \bfx^* - \bfx \rangle \\
	% &= f(\bfx) + \langle \nabla f(\bfx), \bfx^* \rangle  - \langle \nabla f(\bfx), \bfx \rangle\\
	&\ge f(\bfx)  + \langle \nabla f(\bfx), \bfp - \bfx \rangle  \quad \mbox{(by $\bfx^* \in \mathbb{B}(\bfx, d)$ and Property LLOO(i))}
\end{align*}
That is, LLOO naturally provides a lower bound for the optimal objective.
Such lower bounds will be used in our updating scheme of the radius $d$.
We also note that LLOO $\calA(\bfx, d, \nabla f(\bfx))$ often return an optimal
solution over a subset $\calP_k$, which should be constructed in \eqref{Modified-FW} bearing in
mind of its solution efficiency.

Given an LLOO procedure available, a general FW framework can be developed for it to enjoy
a linear convergence rate over general polytope $\calP$ provided $f$ being $L$-smooth and $\mu$-strongly convex, see \cite[Thm.~4.1]{garber2016linearly}.
It is proved that $\ell_1$-LMO is an LLOO over the simplex polytope $S_n$.
The framework is then extended to general polytope.
As we discussed in Introduction, $\ell_1$-LMO is much less efficient than
the original LMO over the simplex polytope $S_n$.
This is the one of the motivations for us to develop the simplex LMO below.

%%%%%%%%%%%%%%%%%%%%%%%%%%%%%%%%%%%%%%%%%%%%%%%%%%
\section{Simplex FW Method}\label{Section-SFW}

This section is solely devoted to the case of simplex polytope: Problem \eqref{Problem: general} with 
$\calP  = S_n$.
%In Sections \ref{Section: SLMO}-\ref{Section: nonstrongly_convex_generalization}, we focus on the following fundamental optimization problem:
%\begin{equation}\label{Problem: simplex}
%    \begin{aligned}
%        \min \quad  & f(\bfx) \quad
%        \mbox{s.t.} \quad & \bfx\in S_n.
%    \end{aligned}
%\end{equation}
We will then extend the obtained results to general polytopes in the next section.
We start with the introduction of simplex ball.

\subsection{Simplex Ball and Simplex-based Linear Minimization Oracle}\label{Section: Simplex ball}

In this subsection, we will formally define the concept of the simplex ball and present some of its useful properties. Following this, we will introduce the Simplex-based Linear Minimization Oracle (SLMO) and provide an efficient algorithm for solving it.

%We first introduce the concept of the simplex ball as follows.

\begin{definition}[Simplex ball]\label{Def: simplex_ball}
	Let
	$
	  S_0 := S_n - \frac 1n \bfone_n .
	$    
	For any $\bfx\in\mathbb{R}^n$ and $d>0$, we define $S(\bfx, d)$ as the simplex ball of radius $d$ centered at $\bfx$ by
    \begin{equation}\label{SimplexBall-New}
    	S(\bfx, d) := \bfx + (nd) S_0 = \Big\{ (\bfx - d\bfone_n) + nd\bflambda \mid \bflambda \in S_n  \Big\}.
       % S(\bfx,d):=\{\bfx+d\bfr\vert \bfr\in\text{conv}\{n\bfe_i-\bfone_n:i\in [n] \} \}.
    \end{equation}
\end{definition}

The following properties of the simplex ball are crucial to our development.
The proof is moved to the Supplement \ref{Section: app_proof_simplexball}.

\begin{lemma}\label{Lemma: Simplex ball}
    Given $\bfx\in S_n$ and $d>0$, we have
    \begin{enumerate}
        \item[(1)] The unit simplex is a simplex ball, i.e., $S_n = S(\frac{1}{n}\bfone_n, \frac{1}{n})$.
        Moreover, we have
        \begin{equation}\label{SimplexBall}
        	 S(\bfx,d) =\Big\{\bfx+d\bfr\vert \bfr\in \rm{Conv}\{n\bfe_i-\bfone_n:i\in [n] \} \Big\}.
        \end{equation}

        \item[(2)]
        The intersection of two simplex balls, if nonempty, is again a simplex ball. In particular,
        \be \label{Eq: definition_d}
         S_n\cap S(\bfx,d)=S(\widehat{\bfx},\widehat{d}) \ \
         \mbox{where} \ \
         \left\{
         \begin{array}{l}
         	\widehat{d} = \frac{\sum_{i=1}^n\min\{d,\; x_i\}}{n} \\
         	\widehat{x}_i = \max\{x_i, d\}+\widehat{d}-d, \quad i \in [n] .
         \end{array}
         \right .
        \ee

        Moreover, for $\bfx_1,\bfx_2\in S_n$ and radius $d_1,d_2>0$ such that
         $S(\bfx_1,d_1)\cap S(\bfx_2,d_2)\neq\emptyset$, it holds
         \[
         S(\bfx_1, d_1)\cap S(\bfx_2,d_2)=S(\bfx_3, d_3),
         \]
         where
         \begin{equation}\label{Eq: intsect_simplex_balls}
         	\begin{aligned}
         		d_3   &= \frac{1+\sum_{i=1}^n\min\{d_1-x_1(i),d_2-x_2(i) \}}{n},\\
         		x_3(i)  &= \max\{x_1(i)-d_1,x_2(i)-d_2 \}+ d_3, \ \ i\in[n]
         	\end{aligned}
         \end{equation}
         Consequently, we have $d_3 \le \min\{d_1, d_2\}$.

        \item[(3)] % The calculation of the linear minimization over the simplex ball is exactly the same as over the unit simplex.
        The linear optimization over a simplex ball has the following closed-form solution:
        \begin{align*}
          & \bfy^* := \bfx+ (nd) \Big( \bfe_{i^*}- \frac{\bfone_n}n \Big) \in  \argmin_{\bfy\in S(\bfx,d)}\ \langle \bfc, \bfy\rangle \ \ \mbox{with} \ \ i^*=\argmin_{i\in [n]} c_i.
        \end{align*}
        % where $i^*=\argmin_{i\in [n]} c_i$.

%        The optimal solution of $\min_{\bfy\in S(\bfx,d)}\langle \bfc, \bfy\rangle$ is $\bfy^*=\bfx+d(n\bfe_{i^*}-\bfone_n)$, where $i^*=\argmin_{i\in [n]} c_i$.

        \item[(4)] The diameter of the simplex ball $S(\bfx,d)$ is $\sqrt{2}nd$, $\mbox{i.e.}$, $\max_{\bfy_1,\bfy_2\in S(\bfx,d)}\lVert \bfy_1-\bfy_2 \rVert = \sqrt{2}nd.$
        %\[        \max_{\bfy_1,\bfy_2\in S(\bfx,d)}\lVert \bfy_1-\bfy_2 \rVert = \sqrt{2}nd.        \]

        \item[(5)] For any point $\bfy\in S_n$, if $\lVert \bfx-\bfy\rVert\leq d$, then $\bfy\in S(\bfx,d)$. Moreover, for any point $\bfy\in S(\bfx,d)$, we have $\lVert \bfy-\bfx\rVert \leq nd$.
    \end{enumerate}
\end{lemma}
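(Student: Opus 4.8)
The plan is to derive, as the single linchpin of the whole lemma, the inequality (half-space) description of the simplex ball,
\[
  S(\bfx, d) = \Big\{\, \bfy \in \mathbb{R}^n \ \Big|\ \textstyle\sum_{i=1}^n y_i = \sum_{i=1}^n x_i, \ \ y_i \ge x_i - d \ \ \forall i \in [n] \,\Big\},
\]
and then read off every item from it. To obtain this description I would start from $S(\bfx,d)=\{(\bfx-d\bfone_n)+nd\bflambda \mid \bflambda\in S_n\}$ and substitute the two defining relations of $S_n$. Writing $\bfy=(\bfx-d\bfone_n)+nd\bflambda$, so that $\lambda_i=(y_i-x_i+d)/(nd)$, the constraint $\sum_i\lambda_i=1$ becomes $\sum_i y_i=\sum_i x_i$, while $\lambda_i\ge 0$ becomes $y_i\ge x_i-d$; conversely any $\bfy$ meeting the right-hand side yields a legitimate $\bflambda\in S_n$. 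With this characterization in hand the remaining parts are short.

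For part (1) I would simply substitute $\bfx=\frac1n\bfone_n$, $d=\frac1n$ into the definition and use $S_0=S_n-\frac1n\bfone_n$; the vertex form follows from $S_n=\Conv\{\bfe_i\}$, hence $S_0=\Conv\{\bfe_i-\frac1n\bfone_n\}$ and $S(\bfx,d)=\bfx+d\,\Conv\{n\bfe_i-\bfone_n\}$. Part (3) uses that a linear objective over a polytope is minimized at a vertex: the vertices are $\bfv_i=\bfx+d(n\bfe_i-\bfone_n)$, and $\langle\bfc,\bfv_i\rangle=\langle\bfc,\bfx\rangle+d\,(n c_i-\sum_j c_j)$ is minimized exactly at $i^*=\argmin_i c_i$. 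Part (4) combines the fact that the diameter of a convex body is attained between two vertices with $\bfv_i-\bfv_j=nd(\bfe_i-\bfe_j)$, giving $\|\bfv_i-\bfv_j\|=\sqrt2\,nd$ for $i\ne j$. Part (5) again uses the characterization: if $\bfy\in S_n$ and $\|\bfx-\bfy\|\le d$, then $\sum_i y_i=1=\sum_i x_i$ and $x_i-y_i\le\|\bfx-\bfy\|_\infty\le\|\bfx-\bfy\|\le d$, so $\bfy\in S(\bfx,d)$; the reverse bound comes from $\bfy-\bfx=nd(\bflambda-\frac1n\bfone_n)$ with $\|\bflambda-\frac1n\bfone_n\|^2=\sum_i\lambda_i^2-\frac1n\le 1-\frac1n$, since $\lambda_i^2\le\lambda_i$ on $S_n$.

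The core of the work, and the step I expect to be the main obstacle, is part (2). Here I would first record the hidden compatibility requirement: the characterization shows the two supporting hyperplanes are $\sum_i y_i=\sum_i x_1(i)$ and $\sum_i y_i=\sum_i x_2(i)$, so a nonempty intersection forces $\sum_i x_1(i)=\sum_i x_2(i)$ — consistent with the stated hypothesis $\bfx_1,\bfx_2\in S_n$, which is precisely where the constant $1$ in the formula for $d_3$ originates. Intersecting two sets of this inequality form then merges the common hyperplane with the componentwise maximum of the two lower-bound vectors, $\ell_i=\max\{x_1(i)-d_1,\,x_2(i)-d_2\}$, which is once more of the same form; reading off the canonical parameters via $nd_3=\big(\sum_i x_1(i)\big)-\sum_i\ell_i$ and $x_3(i)=\ell_i+d_3$ reproduces \eqref{Eq: intsect_simplex_balls} after the identity $\max\{a,b\}=-\min\{-a,-b\}$, and the special case \eqref{Eq: definition_d} follows by taking $S_n=S(\frac1n\bfone_n,\frac1n)$ and using $\max\{0,x_i-d\}=\max\{x_i,d\}-d$ together with $\sum_i\max\{0,x_i-d\}=1-\sum_i\min\{x_i,d\}$. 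Finally, $d_3\le\min\{d_1,d_2\}$ drops out of $\ell_i\ge x_1(i)-d_1$, which gives $\sum_i\ell_i\ge\sum_i x_1(i)-nd_1$ and hence $nd_3\le nd_1$ (symmetrically for index $2$). The genuinely delicate points are verifying that the merged inequality system still describes a true simplex ball rather than a degenerate or empty set — i.e. confirming $\sum_i\ell_i<\sum_i x_1(i)$ so that $d_3>0$ — and carefully matching signs through the $\max/\min$ conversions so that the closed forms align exactly with the stated formulas.
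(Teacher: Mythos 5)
Your proposal is correct, and for the core item (2) it takes a genuinely different route from the paper. The paper never writes down your linchpin, the half-space description $S(\bfx,d)=\{\bfy \mid \sum_i y_i=\sum_i x_i,\ y_i\ge x_i-d \ \forall i\}$; it works with the parametric description $\bfy=(\bfx-d\bfone_n)+nd\bflambda$, $\bflambda\in S_n$, throughout. Concretely, the paper first proves the special case \eqref{Eq: definition_d} by a two-way inclusion argument in which the convex coefficients $\widehat{\bflambda}$ (resp.\ $\bflambda$) are constructed explicitly and verified entry by entry, and then obtains the general two-ball formula \eqref{Eq: intsect_simplex_balls} from this special case via the rescaling identity $S(\bfx,d)=(nd)S_n+(\bfx-d\bfone_n)$, which lets it write $S(\bfx_1,d_1)\cap S(\bfx_2,d_2)$ as an affine image of an intersection of the form $S_n\cap S(\cdot,\cdot)$. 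You reverse this order: the inequality description makes the general intersection immediate (keep the common hyperplane, take the componentwise maximum of the lower bounds $\ell_i=\max\{x_1(i)-d_1,\,x_2(i)-d_2\}$, and read off $d_3=(1-\sum_i\ell_i)/n$, $x_3(i)=\ell_i+d_3$), and \eqref{Eq: definition_d} then drops out by specializing $S_n=S(\tfrac1n\bfone_n,\tfrac1n)$. Your route is shorter and more transparent, replacing the paper's coefficient bookkeeping and rescaling step by a one-line merge of inequality systems; your derivation of $d_3\le\min\{d_1,d_2\}$ is sound (the paper asserts this consequence without spelling it out), and items (1), (3), (4), (5) are essentially the same in both treatments --- e.g.\ the paper proves (5) by exhibiting $\bflambda=\tfrac1n(\bfone_n+(\bfy-\bfx)/d)$, which is your argument in parametric clothing. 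One small caveat: nonemptiness of the intersection only forces $d_3\ge 0$, not $d_3>0$; two simplex balls can touch at a single point, in which case $S(\bfx_3,d_3)$ degenerates to the singleton $\{\bfx_3\}$. So the ``delicate point'' you flag cannot be settled in the direction you hope, but this degeneracy is inherent in the lemma as stated (the definition requires $d>0$) and is equally glossed over by the paper's proof, so it is not a gap relative to the paper.
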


We now give a formal definition of our LMO based on simplex ball.

\begin{definition}[SLMO]\label{Def: SLMO}
    Given a linear objective $\bfc\in\mathbb{R}^n$, radius $d>0$ and a point $\bfx\in S_n$, a solution $\bfy^*\in \rm{SLMO}(\bfx,d,\bfc)$ is called simplex-based linear minimization oracle if it solves the following optimization problem
    \begin{equation}\label{Problem: SLMO}
    \begin{aligned}
        &\min\ \langle \bfy,\bfc\rangle \quad
        &\rm{s.t.} \quad  \bfy\in S(\bfx,d)\cap S_n.
    \end{aligned}
    \end{equation}
\end{definition}

We have proved in Lemma~\ref{Lemma: Simplex ball}(5) that $\mathbb{B}(\bfx, d) \subseteq S(\bfx, d) \subseteq \mathbb{B}(\bfx, nd)$.
Therefore, for any $\bfy \in \mathbb{B}(\bfx, d) \cap S_n$, we must have
$
 \langle \bfy,\bfc\rangle \ge \langle \bfy^*,\bfc\rangle .
$
This is the first property of LLOO. Moreover,
since both $\bfx, \bfy^* \in S(\bfx, d)$, we must have $\| \bfx - \bfy^*\| \le \rho d$ with $\rho = n$.
This leads to the following key result.

\begin{lemma} \label{Lemma-SLMO-LLOO}
Given $\bfx \in \calP$, $d>0$ and $\bfc \in \mathbb{R}^n$
such that $S(\bfx,d)\cap {S}_n \not= \emptyset$, then
$\rm{SLMO}(\bfx, d, \bfc)$ is an \rm{LLOO} $\calA(\bfx, d, \bfc)$ with $\rho=n$.
\end{lemma}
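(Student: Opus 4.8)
The plan is to verify directly the three requirements in the definition of LLOO for the returned point $\bfy^* \in \SLMO(\bfx, d, \bfc)$, namely feasibility together with properties (i) and (ii), taking $\rho = n$. The entire argument hinges on the two-sided containment recorded in Lemma~\ref{Lemma: Simplex ball}(5), which I would restate as the sandwich
\[
  \mathbb{B}(\bfx, d) \cap S_n \subseteq S(\bfx, d) \subseteq \mathbb{B}(\bfx, nd).
\]
Feasibility is immediate: the nonemptiness hypothesis $S(\bfx,d)\cap S_n \neq \emptyset$, together with compactness of the feasible set and continuity of the linear objective, guarantees a minimizer of \eqref{Problem: SLMO} exists, and since that feasible set lies in $S_n = \calP$, every returned $\bfy^*$ lies in $\calP$.

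For property (i), I would take an arbitrary $\bfy \in \mathbb{B}(\bfx, d) \cap \calP = \mathbb{B}(\bfx, d) \cap S_n$. The left inclusion of the sandwich places $\bfy$ in $S(\bfx, d) \cap S_n$, which is precisely the feasible region over which $\bfy^*$ minimizes $\langle \bfc, \cdot \rangle$ in \eqref{Problem: SLMO}. Optimality of $\bfy^*$ then gives $\langle \bfy, \bfc \rangle \ge \langle \bfy^*, \bfc \rangle$, as required. For property (ii), I would use that $\bfy^* \in S(\bfx, d)$ by construction, so the right inclusion of the sandwich yields $\|\bfx - \bfy^*\| \le nd$; this is exactly condition (ii) with $\rho = n$.

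I expect no genuine obstacle at the level of this lemma: it is a direct corollary once the sandwich is in hand. All the substance sits in Lemma~\ref{Lemma: Simplex ball}(5), and there the delicate half is the inner containment $\mathbb{B}(\bfx, d) \cap S_n \subseteq S(\bfx, d)$ — that every simplex point within Euclidean distance $d$ of the center already lies in the (generally narrower) simplex ball — rather than the routine outer bound $\|\bfy - \bfx\| \le nd$. Consequently the only thing to be careful about here is invoking the \emph{correct} half of the sandwich for each of the two LLOO properties: the inner inclusion drives the optimality estimate (i), while the outer inclusion controls the radius in (ii).
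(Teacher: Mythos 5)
Your proof is correct and takes essentially the same approach as the paper: the paper's argument (given in the paragraph preceding the lemma) likewise verifies LLOO properties (i) and (ii) directly from Lemma~\ref{Lemma: Simplex ball}(5), using the inner inclusion together with optimality of $\bfy^*$ over $S(\bfx,d)\cap S_n$ for (i), and the outer inclusion $S(\bfx,d)\subseteq\mathbb{B}(\bfx,nd)$ for (ii). Your restatement of the sandwich as $\mathbb{B}(\bfx,d)\cap S_n\subseteq S(\bfx,d)\subseteq\mathbb{B}(\bfx,nd)$ is in fact slightly more precise than the paper's phrasing $\mathbb{B}(\bfx,d)\subseteq S(\bfx,d)$, since $S(\bfx,d)$ lies in the hyperplane $\sum_i y_i=1$ and hence cannot contain a full Euclidean ball.
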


The implication of this result is far-reaching because the framework developed in
\cite{garber2016linearly} can be followed to get a linearly convergent algorithm
with SLMO.
An even more important result is that SLMO problem
\eqref{Problem: SLMO} can be solved by the following simple algorithm.

\begin{algorithm}[H]
\footnotesize
	\renewcommand{\algorithmicrequire}{\textbf{Input:}}
	\renewcommand{\algorithmicensure}{\textbf{Output:}}
	\caption{$\SLMO(\bfx, d, \bfc)$}
	\label{Alg: SLMO}
	\begin{algorithmic}[1]
        \REQUIRE point $\bfx\in S_n$, linear objective $\bfc\in\mathbb{R}^n$, radius $d>0$.
        \STATE $\widehat{d}\gets\frac{\sum_{i=1}^n\min\{d, x_i\}}{n}$
        \STATE $\widehat{\bfx}\gets \bfx-\min\{\bfx,d\bfone_n\}+\widehat{d}\bfone_n$
        \STATE $\bfy_+\gets \widehat{\bfx}-\widehat{d}\bfone_n$
        \STATE $i^* \gets \argmin_{i\in [n]}c_i$
        \STATE $\bfy^*\gets\bfy_++n\widehat{d}\;\bfe_{i^*}$
        \ENSURE $\bfy^*$.
	\end{algorithmic}
\end{algorithm}

The algorithm follows these basic steps.
Firstly, it represents the constraint set as a single simplex ball:
$
 S(\widehat{\bfx}, \widehat{d}).
$
% and solves for its centre $\widehat{\bfx}$ and radius $\widehat{d}$.
Secondly, it uses the existing theoretical results of linear programming over the simplex ball to find the optimal solution.
%This approach also forms the framework for proving the validity of our algorithm.

\begin{lemma} \label{Lemma-SLMO}
    Alg.~\ref{Alg: SLMO} finds an optimal solution to Problem \eqref{Problem: SLMO}.
\end{lemma}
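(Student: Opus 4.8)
The plan is to reduce Alg.~\ref{Alg: SLMO} to the two structural results already established in Lemma~\ref{Lemma: Simplex ball}: part (2), which rewrites the intersection $S(\bfx,d)\cap S_n$ as a single simplex ball $S(\widehat{\bfx},\widehat{d})$, and part (3), which gives the closed-form minimizer of a linear objective over any simplex ball. Since Problem~\eqref{Problem: SLMO} minimizes $\langle\bfc,\bfy\rangle$ over exactly this intersection, it suffices to check that the quantities produced by the algorithm agree line-by-line with these two lemmas.

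First I would verify that lines~1--2 reproduce the representation in Lemma~\ref{Lemma: Simplex ball}(2). The radius $\widehat{d}=\frac{1}{n}\sum_{i=1}^n\min\{d,x_i\}$ is copied verbatim, so only the centroid needs checking. Working component-wise, the algorithm sets $\widehat{x}_i = x_i-\min\{x_i,d\}+\widehat{d}$, and I would invoke the elementary identity $x_i-\min\{x_i,d\}=\max\{x_i,d\}-d$ (verified by splitting into the cases $x_i\le d$ and $x_i>d$) to rewrite this as $\widehat{x}_i=\max\{x_i,d\}+\widehat{d}-d$, which is precisely the formula in \eqref{Eq: definition_d}. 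Hence $S(\bfx,d)\cap S_n=S(\widehat{\bfx},\widehat{d})$, and Problem~\eqref{Problem: SLMO} is equivalent to minimizing $\langle\bfc,\bfy\rangle$ over the single simplex ball $S(\widehat{\bfx},\widehat{d})$.

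Next I would apply Lemma~\ref{Lemma: Simplex ball}(3) with center $\widehat{\bfx}$ and radius $\widehat{d}$: its minimizer is $\bfy^* = \widehat{\bfx} + (n\widehat{d})\big(\bfe_{i^*}-\frac{1}{n}\bfone_n\big)$ with $i^*=\argmin_{i\in[n]} c_i$, which I would expand as $\bfy^* = (\widehat{\bfx}-\widehat{d}\,\bfone_n)+n\widehat{d}\,\bfe_{i^*}$. Comparing with lines~3--5, the algorithm sets $\bfy_+=\widehat{\bfx}-\widehat{d}\,\bfone_n$ and then $\bfy^*=\bfy_++n\widehat{d}\,\bfe_{i^*}$, so the returned point coincides exactly with the lemma's minimizer. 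This establishes optimality and completes the argument.

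I do not anticipate a genuine obstacle here; the substance of the proof is entirely contained in the two parts of Lemma~\ref{Lemma: Simplex ball}, and the only mildly delicate point is the min--max identity needed to match the centroid formula. The argument is insensitive to the possibly small value of $\widehat{d}$, since Lemma~\ref{Lemma: Simplex ball}(3) applies to any simplex ball, and feasibility is guaranteed by the standing hypothesis that $S(\bfx,d)\cap S_n\neq\emptyset$.
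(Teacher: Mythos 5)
Your proposal is correct and follows essentially the same route as the paper's proof: invoke Lemma~\ref{Lemma: Simplex ball}(2) to collapse $S(\bfx,d)\cap S_n$ into the single simplex ball $S(\widehat{\bfx},\widehat{d})$, then apply the closed-form linear minimizer of Lemma~\ref{Lemma: Simplex ball}(3) and match it to the algorithm's output. Your explicit component-wise check via the identity $x_i-\min\{x_i,d\}=\max\{x_i,d\}-d$ is a detail the paper leaves implicit, but it changes nothing in substance.
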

\begin{proof}
    First, by Lemma \ref{Lemma: Simplex ball}(2), we have \(S(\bfx, d) \cap S_n = S(\widehat{\bfx}, \widehat{d})\), where the definitions of \(\widehat{\bfx}\) and \(\widehat{d}\) are given in \eqref{Eq: definition_d}. Consequently, Problem \eqref{Problem: SLMO} is equivalent to \(\min_{\bfy \in S(\widehat{\bfx}, \widehat{d})} \langle \bfy, \bfc \rangle\). Note that this is the same form as the problem in Lemma \ref{Lemma: Simplex ball}(3). Thus, we have
    \[
    \bfy^* = \widehat{\bfx} + \widehat{d}(n\bfe_{i^*} - \bfone_n) =
    \max\{ \bfx, d\bfone_n\}
   % (\max\{x_1, d\}, \dots, \max\{x_n, d\})'
   - d\bfone_n + n\widehat{d}\;\bfe_{i^*}.
    \]
    Consequently, Alg.~\ref{Alg: SLMO} solves Problem \eqref{Problem: SLMO}.
%    It is now a simple observation that the vector \(\bfy^*\) computed in Alg.~\ref{Alg: SLMO} is exactly the optimal solution of problem \eqref{Problem: SLMO}. Hence, the lemma follows.
\end{proof}

\begin{remark} (Comparison with $\LMO(\bfc, S_n)$ and $\ell_1$-$\LMO(\bfx, d, \bfc)$)
If we treat the element-wise minimum between two vectors as a basic operation, then $\SLMO$ requires only one extra basic operation, one more vector summation, and one more vector addition compared to the the original $\LMO$ over the simplex $S_n$.
Therefore, its total exact complexity is $4n$ {\em flops}, making it nearly as efficient as $\LMO(\bfc, S_n)$.
However, $\ell_1$-$\LMO(\bfx, d, \bfc)$ involves a sorting operation, whose
overall complexity is usually $O(n\log(n)$), It also involves a few more vector additions.
As will be illustrated in Fig.~\ref{Fig: LMO_compare_L1}, it is far less efficient than the original $\LMO(\bfc, S_n)$ and \SLMO.
Therefore, we expect that a linearly convergent algorithm with $\SLMO$ should be
efficient as well. We develop it below.
\end{remark}

%%%%%%%%%%%%%%%%%%%%%%%%%%%%%%%%%%%%%%%%%%%%%%%%%%%%%%%%%%%%%
\subsection{SFW: Simplex Frank-Wolfe Method}\label{Section: sFW}

%Our goal in this section is to present a few variants of Frank-Wolfe based on SLMO over the unit simplex $\mathcal{S}_n$.
%\subsection{The Simplex Frank-Wolfe Method}
In this subsection, we present a new variant of Frank-Wolfe method called Simplex Frank-Wolfe (abbreviated as SFW), obtained by replacing the LMO with SLMO. The algorithm is formally described as follows.

\begin{algorithm}[H]
\footnotesize
\renewcommand{\algorithmicrequire}{\textbf{Input:}}
\renewcommand{\algorithmicensure}{\textbf{Output:}}
\caption{Simplex Frank-Wolfe Method: SFW}
\label{Alg: SFW}
\begin{algorithmic}[1]
    \REQUIRE $\bfx_0\in S_n$, initial lower bound $B_0$.
    \STATE Set: $d_0\gets\sqrt{\frac{2(f(\bfx_0)-B_0)}{\mu}}.$%, \bar{\bfx}_0\gets\bfx_0,\bar{d}_0\gets d_0$.
    \FOR{$k=1,\dots$}

        \STATE Compute $\bfy_k\in \SLMO({\bfx}_{k-1},{d}_{k-1},\nabla f(\bfx_{k-1}))$.

        \STATE Compute the working lower bound: $B_k^w\gets f(\bfx_{k-1})+\langle\nabla f(\bfx_{k-1}),\bfy_k-\bfx_{k-1}\rangle$.

        \STATE Update best bound $B_k\gets \max\{B_{k-1}, B_k^w \}$.

        \STATE Set $\bfx_k\gets (1-\delta_k)\bfx_{k-1}+\delta_k\bfy_k$ for some $\delta_k\in [0,1]$.

        % \STATE Set: $d_k\gets \min\{e^{-\frac{\mu}{8Ln^2}}{d}_{k-1}, \sqrt{\frac{2(f(\bfx_k)-B_k)}{\mu}}\}$.

        \STATE Set: $d_k\gets \sqrt{\frac{2(f(\bfx_k)-B_k)}{\mu}}$.
        % \STATE Find $(\bar{\bfx}_k,\bar{d}_k)$ such that $S(\bar{\bfx}_k,\bar{d}_k)=S(\bfx_k,d_k)\cap S(\bar{\bfx}_{k-1},\bar{d}_{k-1})$ by using \eqref{Eq: intsect_simplex_balls}.
    \ENDFOR
\end{algorithmic}
\end{algorithm}

Before stating its convergence rate result, we make the following remarks regarding Alg.~\ref{Alg: SFW}.

\begin{remark} \label{Remark-Alg-SFW}
(Choice of $d_k$ update strategy)
We could follow the linear shrinking rule of $d_k$ in \cite{garber2016linearly,lan2020first}:
$d_k = \gamma d_{k-1}$ with properly chosen $\gamma <1$. The linear convergent rate would be guaranteed by invoking the LLOO property of \SLMO\; (Lemma~\ref{Lemma-SLMO-LLOO}).
We do not take this route for convergence analysis because of the following two reasons.
One is that the key property $\mathbb{B}(\bfx, d) \subseteq S(\bfx, d) \subseteq \mathbb{B}(\bfx, nd)$ ensuring LLOO will have to be modified when it comes to the general polytope as our simplex ball is not based on any norm.
The corresponding relationship becomes $\mathbb{B}(\bfx, dD/\eta) \subseteq S_{\calP}(\bfx, d) \subseteq \mathbb{B}(\bfx, (n+1)dD)$, see Lemma~\ref{Lemma: SLMO_P}, where $S_\calP$ is defined.
At least at a technical level, the original LLOO will have to be generalized to suit this extension and the corresponding proofs have also to be reproduced. The proof we provided below is more direct.
The other reason is that we use the best lower bound $B_k$ provided by the algorithm to define $d_k$.
This choice is important because we are going to incorporate other accelerating strategies to \SFW\ resulting in \rSFW.
The stopping criterion used there will be also based on the best lower bounds obtained.	
Therefore, the convergence analysis for \SFW\ will naturally be adapted to \rSFW.
\end{remark}

%This process, known as the \textit{neighborhood technique}, was first introduced by \citet{garber2016linearly}.
% Importantly, throughout the iteration process, the optimal point $\bfx^*$ consistently remains within $S(\bfx_{k},d_{k})$, which is crucial for establishing the convergence speed of the algorithm.
At the $k$-th iteration, Alg.~\ref{Alg: SFW} first invokes SLMO to find the minimum point $\bfy_k$ of the first-order approximate expansion of the objective function within the region $S(\bfx_{k-1},d_{k-1})\cap S_n$.
Subsequently, using a suitable step size $\delta_k$, a convex combination of $\bfy_k$ and $\bfx_{k-1}$ is computed to update the iteration point to $\bfx_{k+1}$.
% Finally, the algorithm proportionally reduces the radius $d$, ensuring that the optimal solution $\bfx^*$ progressively falls within a smaller neighborhood $S(\bfx_{k},d_{k})$.
Finally, the algorithm updates the radius $d$, ensuring that the optimal solution $\bfx^*$ progressively falls within a smaller neighborhood $S(\bfx_{k},d_{k})$.
For Alg.~\ref{Alg: SFW}, we propose the following simple step size, as an alternative to the simple step size selection in the original Frank-Wolfe algorithm:
\begin{equation}\label{Eq: step size_constant2}
    \delta_k=\frac{\mu}{2Ln^2}.
\end{equation}
%By properly choosing the step size parameter $\delta_k$, we have the following convergence results for the SFW method described above.
We have the following linear convergence rate result. The induction technique in the proof below was taken from \cite[Lemma~4.3]{garber2016linearly}.

\begin{theorem}\label{Thm: Convergence-SL1}
    Let $\{\bfx_k\}$ be the sequences generated by Alg.~\ref{Alg: SFW} with step size policy for $\{\delta_k\}$ in \eqref{Eq: step size_line}, \eqref{Eq: step size_smooth}, or \eqref{Eq: step size_constant2}. Then, for $k\geq 0$, we have
    \begin{equation}\label{Eq: Convergence_sfw}
        f(\bfx_k) -f^*\leq f(\bfx_k) - B_k\leq \frac{\mu d_0^2}{2}e^{-\frac{\mu}{4Ln^2}k}.
    \end{equation}
\end{theorem}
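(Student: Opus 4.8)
The plan is to prove the two inequalities separately: the left one, $f(\bfx_k)-f^*\le f(\bfx_k)-B_k$, reduces to showing $B_k\le f^*$ for all $k$, while the right one follows from a one-step contraction of the gap $h_k:=f(\bfx_k)-B_k$, which by the definition of $d_k$ in Alg.~\ref{Alg: SFW} equals $\tfrac{\mu}{2}d_k^2$.

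For $B_k\le f^*$, I would prove by induction the joint statement that $B_{k-1}\le f^*$ forces $\bfx^*\in S(\bfx_{k-1},d_{k-1})\cap S_n$ and hence $B_k\le f^*$. The base case $B_0\le f^*$ is the standing assumption on the initial lower bound. For the inductive step, combine the quadratic-growth property \eqref{Eq: strongly_convex_property}, $\tfrac{\mu}{2}\|\bfx_{k-1}-\bfx^*\|^2\le f(\bfx_{k-1})-f^*$, with $d_{k-1}^2=2(f(\bfx_{k-1})-B_{k-1})/\mu$ and $B_{k-1}\le f^*$ to obtain $\|\bfx_{k-1}-\bfx^*\|\le d_{k-1}$; since $\bfx^*\in S_n$, Lemma~\ref{Lemma: Simplex ball}(5) places $\bfx^*$ in $S(\bfx_{k-1},d_{k-1})\cap S_n$. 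Feasibility of $\bfx^*$ for the SLMO subproblem gives $\langle\nabla f(\bfx_{k-1}),\bfy_k\rangle\le\langle\nabla f(\bfx_{k-1}),\bfx^*\rangle$, and convexity then yields $B_k^w=f(\bfx_{k-1})+\langle\nabla f(\bfx_{k-1}),\bfy_k-\bfx_{k-1}\rangle\le f^*$, so $B_k=\max\{B_{k-1},B_k^w\}\le f^*$, completing both the left inequality and the induction.

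For the rate, set $g_k:=\langle\nabla f(\bfx_{k-1}),\bfx_{k-1}-\bfy_k\rangle\ge 0$, nonnegative because $\bfx_{k-1}$ is feasible for the SLMO subproblem and $\bfy_k$ minimizes the linear objective there. The crucial observation is that, since $B_k=\max\{B_{k-1},B_k^w\}$, the gap $h_k$ is bounded above by both $f(\bfx_k)-B_{k-1}$ and $f(\bfx_k)-B_k^w$ at once. With the fixed step $\bar\delta=\mu/(2Ln^2)\le 1$, $L$-smoothness gives $f(\bfx_k)-f(\bfx_{k-1})\le -\bar\delta g_k+\tfrac{L\bar\delta^2}{2}\|\bfy_k-\bfx_{k-1}\|^2$; this is immediate for the simple step \eqref{Eq: step size_constant2}, and for line-search \eqref{Eq: step size_line} and short step \eqref{Eq: step size_smooth} it holds because those rules return an iterate whose objective is no larger than that of the fixed step $\bar\delta$. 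Substituting into the two bounds for $h_k$ produces an increasing and a decreasing affine function of $g_k$ that coincide at $g_k=h_{k-1}$; as the minimum of an increasing and a decreasing function is largest at their crossing point, $h_k\le(1-\bar\delta)h_{k-1}+\tfrac{L\bar\delta^2}{2}\|\bfy_k-\bfx_{k-1}\|^2$ no matter which bound is active. Bounding $\|\bfy_k-\bfx_{k-1}\|\le n d_{k-1}$ via Lemma~\ref{Lemma: Simplex ball}(5) and using $d_{k-1}^2=2h_{k-1}/\mu$ gives $h_k\le\big(1-\bar\delta+\tfrac{Ln^2}{\mu}\bar\delta^2\big)h_{k-1}$, and $\bar\delta=\mu/(2Ln^2)$ minimizes the bracket to $1-\mu/(4Ln^2)$.

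I would then unroll the recursion $h_k\le(1-\tfrac{\mu}{4Ln^2})h_{k-1}$ and apply $1-t\le e^{-t}$ to get $h_k\le e^{-\frac{\mu}{4Ln^2}k}h_0$ with $h_0=f(\bfx_0)-B_0=\tfrac{\mu}{2}d_0^2$, which is exactly \eqref{Eq: Convergence_sfw}. The main obstacle I anticipate is the bookkeeping around the best bound $B_k=\max\{B_{k-1},B_k^w\}$: a naive recursion through $B_k^w$ alone breaks down whenever the previous bound $B_{k-1}$ is tighter, and the clean remedy is the simultaneous-bound / crossing-point argument above. The conceptual core on which everything rests is the feasibility of $\bfx^*$ for the SLMO subproblem, which is precisely what links the quadratic-growth radius control through $d_{k-1}$ to the simplex-ball geometry of Lemma~\ref{Lemma: Simplex ball}(5).
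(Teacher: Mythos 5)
Your proposal is correct and follows essentially the same route as the paper's own proof: the same induction that keeps $\bfx^*$ inside the shrinking ball $S(\bfx_{k-1},d_{k-1})$ via the quadratic-growth property \eqref{Eq: strongly_convex_property} and Lemma~\ref{Lemma: Simplex ball}(5) (hence $B_k\le f^*$), the same comparison of all three step-size rules against the fixed step $\bar\delta=\mu/(2Ln^2)$, the same diameter bound $\|\bfy_k-\bfx_{k-1}\|\le n d_{k-1}$, and the same contraction factor $1-\mu/(4Ln^2)$. The only cosmetic difference is how $B_k=\max\{B_{k-1},B_k^w\}$ is absorbed: the paper uses the convex-combination bound $(1-\bar\delta)B_{k-1}+\bar\delta B_k^w\le B_k$, whereas you use the equivalent crossing-point argument on the two affine bounds in $g_k$.
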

\begin{proof}
    We first claim that $\bfx^*\in S(\bfx_k,d_k)$ and that $f(\bfx_k)-B_k\leq \frac{\mu d_k^2}{2}$. We prove this by induction.
    First, we have
    \begin{equation*}
        \frac{\mu d_0^2}{2}=f(\bfx_0)-B_0\geq f(\bfx_0)-f^*\stackrel{(a)}{\geq} \frac{\mu}{2}\lVert \bfx_{0}-\bfx^* \rVert^2,
    \end{equation*}
    where $(a)$ comes from \eqref{Eq: strongly_convex_property}.
    This implies that $\lVert \bfx_{0}-\bfx^* \rVert\leq d_0$, and by Lemma \ref{Lemma: Simplex ball}(5), we have $\bfx^*\in S(\bfx_0,d_0)$.
    Therefore, the claim holds for $k=0$.

    Now suppose that $\bfx^*\in S(\bfx_t,d_t)$ and $f(\bfx_t)-B_t\leq \frac{\mu d_t^2}{2}$ for all $t\leq k-1$.
    % Since $S(\widehat{\bfx}_{k-1},\widehat{d}_{k-1})=\cap_{t=0}^{k-1}S(\widehat{\bfx}_t,\widehat{d}_t)$, we have $\bfx^*\inS(\widehat{\bfx}_{k-1},\widehat{d}_{k-1})$.
    Let $\gamma :=\frac{\mu}{2Ln^2} \le 1$. For step size policy $\delta_k$ in \eqref{Eq: step size_line}
    (exact line search stepsize)
     or \eqref{Eq: step size_constant2}, we both have
    \begin{eqnarray}\label{Eq: detail2_1}
    %\begin{aligned}
        f(\bfx_k)
        &= & f(\bfx_{k-1}+\delta_k (\bfy_k-\bfx_{k-1}))   \le f(\bfx_{k-1}+\gamma (\bfy_k-\bfx_{k-1})) \nonumber \\
        &\leq & f(\bfx_{k-1})+\gamma \langle \nabla f(\bfx_{k-1}),\bfy_k-\bfx_{k-1}\rangle+\frac{L\gamma^2}{2}\lVert \bfy_k-\bfx_{k-1} \rVert^2.
    %\end{aligned}
    \end{eqnarray}
    Similarly, for the step size policy \eqref{Eq: step size_smooth} (short stepsize), we have
    \begin{eqnarray}\label{Eq: detail2_2}
        f(\bfx_k)
        &\leq & f(\bfx_{k-1})+\delta_k\langle \nabla f(\bfx_{k-1}),\bfy_k-\bfx_{k-1}\rangle+\frac{L\delta_k^2}{2}\lVert \bfy_k-\bfx_{k-1} \rVert^2 \nonumber \\
        &\leq & f(\bfx_{k-1})+\gamma \langle \nabla f(\bfx_{k-1}),\bfy_k-\bfx_{k-1}\rangle+\frac{L\gamma^2}{2}\lVert \bfy_k-\bfx_{k-1} \rVert^2.
    \end{eqnarray}
    Combining \eqref{Eq: detail2_1} and \eqref{Eq: detail2_2}, we have
    \begin{equation*}
    \begin{aligned}
        f(\bfx_k)
        \leq & f(\bfx_{k-1})+\gamma \langle \nabla f(\bfx_{k-1}),\bfy_k-\bfx_{k-1}\rangle+\frac{L\gamma^2}{2}\lVert \bfy_k-\bfx_{k-1} \rVert^2 \\
        \stackrel{(b)}{\leq} & (1-\gamma)f(\bfx_{k-1})+\gamma B_k^w+\frac{L\gamma^2}{2}\lVert \bfy_k-\bfx_{k-1} \rVert^2 \\
        \stackrel{(c)}{\leq}\ & (1-\gamma)(f(\bfx_{k-1})-B_{k-1})+B_k+\frac{L\gamma^2}{2}\lVert \bfy_k-\bfx_{k-1} \rVert^2
    \end{aligned}
    \end{equation*}
    holds for step size policy \eqref{Eq: step size_line}, \eqref{Eq: step size_smooth}, or \eqref{Eq: step size_constant2}, where $(b)$ comes from the definition of $B_k^w$, and $(c)$ is due to $B_k\geq \max\{B_{k-1},B_k^w\}$.
    Subtracting $B_k$ from the both sides of the above inequality, we obtain
    \begin{equation*}
    \begin{aligned}
        f(\bfx_k)-B_k\leq & (1-\gamma)(f(\bfx_{k-1})-B_{k-1})+\frac{L\gamma^2}{2}\lVert \bfy_k-\bfx_{k-1} \rVert^2 \\
        \stackrel{(d)}{\leq} & (1-\gamma)\frac{\mu}{2}d_{k-1}^2+\frac{L\gamma^2}{2}n^2d_{k-1}^2 = \left[ (1-\gamma)\frac{\mu}{2} + \frac{L\gamma^2n^2}{2} \right]d_{k-1}^2,
    \end{aligned}
    \end{equation*}
    where $(d)$ is due to our inductive hypothesis and Lemma \ref{Lemma: Simplex ball}(5).
    By plugging in the value of $\gamma$, and using $1-x\leq e^{-x}$, we have that
    \begin{equation} \label{Gap-Bound}
        f(\bfx_k)-B_k\leq \frac{\mu}{2} \left(1-\frac{\mu}{4Ln^2} \right)d_{k-1}^2\leq \frac{\mu}{2}e^{-\frac{\mu}{4Ln^2}}d_{k-1}^2.
    \end{equation}
    With the definition of $d_k$, we have $f(\bfx_k)- B_k = \frac{\mu}2 d_k^2$. The bound in \eqref{Gap-Bound}
    implies
    \be \label{d-LinearReduction}
      d_k \le e^{-\frac{\mu}{8Ln^2}}{d}_{k-1} .
    \ee
    %Combining the above inequality with the fact that $f(\bfx_k)-B_k=\frac{\mu}{2}(\sqrt{\frac{2(f(\bfx_k)-B_k)}{\mu}})^2$, and by the definition of $d_k$, we conclude that $f(\bfx_k)-B_k\leq \frac{\mu d_k^2}{2}$ and $d_k\leq e^{-\frac{\mu}{8Ln^2}}{d}_{k-1}$.
    By the inductive hypothesis, we know that $\bfx^*\in S(\bfx_t,d_t)$ holds for all $t\leq k-1$. Thus $B_{t+1}^w$ is a valid lower bound of $f^*$, and consequently, $B_k$ is also a lower bound of $f^*$. Now by  \eqref{Eq: strongly_convex_property}, we have
    \begin{equation*}
        \lVert \bfx_k-\bfx^*\rVert^2\leq \frac{2}{\mu}(f(\bfx_k)-f^*)\leq \frac{2}{\mu}(f(\bfx_k)-B_k)= d_k^2.
    \end{equation*}
    This implies that $\bfx^*\in S(\bfx_k,d_k)$ by Lemma \ref{Lemma: Simplex ball}(5). Therefore, we have completed the proof of the claim.

    We now start to prove the conclusion in Theorem \ref{Thm: Convergence-SL1}.
    From the earlier proof, we know that $B_k\leq f^*$, thus confirming the first part of the inequality.
    By the definition of $d_k$ and the established reduction inequality \eqref{d-LinearReduction},
    we have
    $$
    f(\bfx_k)-B_k = \frac{\mu d_k^2}{2} \leq \frac{\mu d_0^2}{2}e^{-\frac{\mu}{4Ln^2}k}.
    $$
    The proof is thus completed.
\end{proof}

\begin{remark} \label{Remark-ComputationComplexity}
(Iteration complexity of SFW)
If we skip Lines 4–5 and replace Line 7 in Alg.~\ref{Alg: SFW} with $\sqrt{\frac{2\langle\nabla f(\bfx_{k-1}),\bfx_{k-1}-\bfy_k\rangle}{\mu}}$, the algorithm remains correct and preserves its convergence guarantee.
This modification avoids computing the objective value $f(\bfx_k)$, making the algorithm more practical and simple when the objective is expensive or difficult to evaluate. 
In this case, assuming that there is an oracle to obtain the gradient information $\nabla f(\bfx)$ at each iteration, we are able to give the exact number of {\em flops} operations to compute the next iterate.
The SLMO part to get $\bfy_k$ is $4n$ {\em flops}, and computing the direction $d_k$ requires an additional $3n$ {\em flops}.
For the short step size strategy, evaluating the step size $\delta_k$ incurs $2n$ {\em flops}, and updating the iterate $\bfx_k$ takes another $3n$ {\em flops}.
Thus, SFW needs $10n$ flops with a simple step size, or $12n$ flops with the short step size—--yet still achieves linear convergence for the simplex.
To our knowledge, this is the lowest per-iteration cost among FW variants with linear convergence.
\end{remark}

%%%%%%%%%%%%%%%%%%%%%%%%%%%%%%%%%%%%%%%%%%%%%
\subsection{Refining SFW}

In this subsection, we aim to further reduce the computational overhead of the proposed oracle as much as possible, while retaining the linear convergence rate.
The motivation stems from an important observation.
$\SLMO(\bfx, d, \bfc)$ can be split into two parts.
The first part is to construct a new Simplex-ball $S(\widehat{\bfx}, \widehat{d})=
S(\bfx, d)\cap S_n$.
For easy reference, we call it $\SLMO$-1, which corresponds to Lines 1 in Alg.~\ref{Alg: SLMO}.
The second part, which finds an optimal solution over $S(\widehat{\bfx}, \widehat{d})$, is referred to as $\SLMO$-2 and corresponds to Lines 4-5 in Alg.~\ref{Alg: SLMO}.
It is easy to see that the computation of $\SLMO$-2 requires only one more vector addition compared to the standard LMO over $S_n$ and hence its computation is already kept minimum.
The extra computation of $\SLMO$ is from $\SLMO$-1.
Since the new Simplex ball is already constructed, we like to carry out a few more times of the $\SLMO$-2 part.
This is roughly to find an approximate solution $\bfp$ to the problem
\[
  \min \; f(\bfy) \quad \mbox{s.t.} \quad \bfy \in S(\widehat{\bfx}, \widehat{d})
\]
with starting point $\bfx$. Our control of this refinement step is for
the overall computation to remain $O(n)$ and the overall convergence rate to remain linear.
The overall algorithm is given in Alg.~\ref{Alg: rSFW} and it is called Refined SFW.

% SLMO-1 and SLMO-2, with the former contributing the majority of the computational cost.
%Based on this insight, we design Alg.~\ref{Alg: rSFW}, referred to as refined Simplex Frank-Wolfe (abbreviated as rSFW), which efficiently addresses this issue.

\begin{algorithm}[t]
\footnotesize
\renewcommand{\algorithmicrequire}{\textbf{Input:}}
\renewcommand{\algorithmicensure}{\textbf{Output:}}
\caption{rSFW: Refined Simplex Frank-Wolfe Method}
\label{Alg: rSFW}
\begin{algorithmic}[1]
    \REQUIRE Radius contraction ratio $\rho>1$, initial lower bound $B_0$.
    \STATE Set: $d_0\gets\frac{1}{n}, \bfx_0\gets\frac{\bfone_n}{n},J\gets\frac{8\rho^2n^2L}{\mu},\bar{\bfx}_0\gets\bfx_0,\bar{d}_0\gets d_0$.
    \FOR{$k=1,\dots$}
        \STATE Set: $\bfp_0\gets\bfx_{k-1},C_0\gets B_{k-1}$.
        \STATE ($\SLMO$-1) construct the new Simplex ball: $S(\widehat{\bfx}_{k-1},\widehat{d}_{k-1})=S({\bar{\bfx}}_{k-1},\bar{d}_{k-1})\cap S_n$.
        \FOR{$j=1,\dots,J$}
            \STATE ($\SLMO$-2):  Compute $\bfy_j={\argmin}_{\bfy\in S(\widehat{\bfx}_{k-1},\widehat{d}_{k-1})}\langle \nabla f(\bfp_{j-1}),\bfy \rangle$.
            \STATE Compute the current lower bound: $C_j^w\gets f(\bfp_{j-1})+\langle \nabla f(\bfp_{j-1}),\bfy_j-\bfp_{j-1} \rangle$.
            \STATE Update the best lower bound
            $C_j\gets \max\{C_{j-1}, C_j^w\}$.
            \IF{$f(\bfp_j)-C_j\leq \frac{\mu}{2\rho^2}\widehat{d}_{k-1}^2$}
                \STATE Break the inner loop.
            \ENDIF
            \STATE Set $\bfp_j\gets(1-\delta_j)\bfp_{j-1}+\delta_j\bfy_j$ for some $\delta_j\in [0,1]$.
        \ENDFOR
        \STATE Set: $\bfx_k\gets\bfp_j,d_k\gets\frac{\widehat{d}_{k-1}}{\rho},B_k\gets C_j$.
        \STATE Find $(\bar{\bfx}_k,\bar{d}_k)$ such that $S(\bar{\bfx}_k,\bar{d}_k)=S(\bfx_k,d_k)\cap S(\widehat{\bfx}_{k-1},\widehat{d}_{k-1})$ by using \eqref{Eq: intsect_simplex_balls}.
    \ENDFOR
\end{algorithmic}
\end{algorithm}

Alg.~\ref{Alg: rSFW} keeps three sequences
$
 \{ (\bfx_k, d_k)\}
$,
$
\{ (\bar{\bfx}_k, \bar{d}_k)\}
$, and
$
\{ (\widehat{\bfx}_k, \widehat{d}_k)\}
$, each associated with a simplex ball, namely $S(\bfx_k, d_k)$,
$S(\bar{\bfx}_k, \bar{d}_k)$, and $S(\widehat{\bfx}_k, \widehat{d}_k)$.
Starting with
$
(\bfx_0, d_0) = (\bar{\bfx}_0, \bar{d}_0)
$,
we define $(\widehat{\bfx}_0, \widehat{d}_0)$ such that $S(\widehat{\bfx}_0, \widehat{d}_0) = S(\bar{\bfx}_0, \bar{d}_0) \cap S_n$.
At the $k$th iteration, we first compute
\[
  \bfx_k \approx \argmin \left\{
   f(\bfy) \ | \ \bfy \in S(\widehat{\bfx}_{k-1}, \widehat{d}_{k-1})
  \right\} \quad \mbox{and} \quad
  d_k = \widehat{d}_{k-1}/\rho.
\]
We then define $(\bar{\bfx}_k, \bar{d}_k)$ by its simplex ball, which satisfies $S(\bar{\bfx}_k,\bar{d}_k)=S(\bfx_k,d_k)\cap S(\widehat{\bfx}_{k-1},\widehat{d}_{k-1})$.
We further define the iterate $(\widehat{\bfx}_k, \widehat{d}_k)$ by its simplex ball satisfying $S (\widehat{\bfx}_k, \widehat{d}_k) = S(\bar{\bfx}_k,\bar{d}_k) \cap S_n$.
They can all be efficiently computed via the formula \eqref{Eq: intsect_simplex_balls}.

A great advantage of Alg.~\ref{Alg: rSFW} is its computation of $\bfx_k$.
The oracle we call is SLMO-2, which ensures that the iteration complexity remains the same as the original FW algorithm.
It is also important to highlight that the inner loop of our algorithm (Lines 5-13) follows the standard FW algorithm.
Its primary goal is to find a solution $\bfp_j$ that satisfies $f(\bfp_j) - C_j \leq \frac{\mu}{2\rho^2} \widehat{d}_{k-1}^2$. Therefore, various speedup techniques for the classical FW algorithm can be directly applied to this inner loop without interfering with the outer loop of the algorithm. These include approaches such as the `away-step' and `pairwise' variants of FW proposed by \cite{lacoste2015global}, `fully-corrective' variant of FW proposed by \cite{jaggi2013revisiting},  as well as the warm start technique suggested by \cite{freund2016new}.

The following theorem summarizes the convergence result for this algorithm.

\begin{theorem}\label{Thm: Convergence-SL2}
    Let $\{\bfx_k\}$ be the sequences generated by Alg.~\ref{Alg: rSFW} with step size policy for $\{\delta_j\}$ in \eqref{Eq: step size_const}-\eqref{Eq: step size_smooth}. Then, for $k\geq 1$, we have
    \begin{equation*}
        f(\bfx_k) -f^*\leq f(\bfx_k)-B_k\leq \frac{\mu}{2n^2\rho^{2k}}.
    \end{equation*}
\end{theorem}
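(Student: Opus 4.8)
The plan is to prove the result by induction on the outer index $k$, carrying a compound invariant that simultaneously controls the geometry (the optimum stays inside the current working simplex ball) and the validity of the lower bounds. Concretely, I would show that for every $k$: (a) $\bfx^*\in S(\widehat{\bfx}_k,\widehat{d}_k)$; (b) $B_k\le f^*$; and (c) the purely geometric contraction $\widehat{d}_k\le \widehat{d}_{k-1}/\rho$, which with $\widehat{d}_0=1/n$ gives $\widehat{d}_k\le 1/(n\rho^k)$. Claim (c) follows directly from Lemma~\ref{Lemma: Simplex ball}(2): since $S(\bar{\bfx}_k,\bar{d}_k)=S(\bfx_k,d_k)\cap S(\widehat{\bfx}_{k-1},\widehat{d}_{k-1})$, the intersection radius obeys $\bar{d}_k\le\min\{d_k,\widehat{d}_{k-1}\}=d_k=\widehat{d}_{k-1}/\rho$ (using $\rho>1$), and intersecting with $S_n$ only shrinks it further, so $\widehat{d}_k\le\bar{d}_k\le\widehat{d}_{k-1}/\rho$. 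The base case is immediate: Lemma~\ref{Lemma: Simplex ball}(1) gives $S(\widehat{\bfx}_0,\widehat{d}_0)=S_n$ with $\widehat{d}_0=1/n$, so $\bfx^*\in S_n$ yields (a), while (b) is the input assumption on $B_0$.

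For the inductive step I would first show the inner loop returns $\bfx_k$ with $f(\bfx_k)-B_k\le\frac{\mu}{2\rho^2}\widehat{d}_{k-1}^2$, i.e. the break condition is reached within the budget $J$. The inner loop (Lines~5--13) is precisely standard Frank-Wolfe applied to $\min f$ over the fixed simplex ball $S(\widehat{\bfx}_{k-1},\widehat{d}_{k-1})$, whose Euclidean diameter is $D=\sqrt{2}\,n\,\widehat{d}_{k-1}$ by Lemma~\ref{Lemma: Simplex ball}(4). By Theorem~\ref{Thm: FW}, the Frank-Wolfe gap at the $j$th inner iterate, $f(\bfp_{j-1})-C_j^w=\langle\nabla f(\bfp_{j-1}),\bfp_{j-1}-\bfy_j\rangle$, is at most $2LD^2/j=4Ln^2\widehat{d}_{k-1}^2/j$. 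Since $C_j\ge C_j^w$, the quantity tested in the break condition is no larger than this gap, so it drops below $\frac{\mu}{2\rho^2}\widehat{d}_{k-1}^2$ as soon as $j\ge 8\rho^2n^2L/\mu=J$; hence the loop terminates in at most $J$ steps with the advertised accuracy.

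Next I would verify $B_k\le f^*$, which is what makes the first displayed inequality $f(\bfx_k)-f^*\le f(\bfx_k)-B_k$ meaningful. Here I invoke hypothesis (a) at index $k-1$: because $\bfx^*\in S(\widehat{\bfx}_{k-1},\widehat{d}_{k-1})$ and $\bfy_j$ minimizes $\langle\nabla f(\bfp_{j-1}),\cdot\rangle$ over that same ball, $\langle\nabla f(\bfp_{j-1}),\bfy_j\rangle\le\langle\nabla f(\bfp_{j-1}),\bfx^*\rangle$, so convexity gives $C_j^w=f(\bfp_{j-1})+\langle\nabla f(\bfp_{j-1}),\bfy_j-\bfp_{j-1}\rangle\le f(\bfp_{j-1})+\langle\nabla f(\bfp_{j-1}),\bfx^*-\bfp_{j-1}\rangle\le f^*$. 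Since $C_0=B_{k-1}\le f^*$ by (b), taking running maxima preserves the bound, so $B_k=C_j\le f^*$.

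Finally I would close the induction on (a) and read off the rate. Combining the last two facts, $f(\bfx_k)-f^*\le f(\bfx_k)-B_k\le\frac{\mu}{2\rho^2}\widehat{d}_{k-1}^2$, so the quadratic-growth inequality \eqref{Eq: strongly_convex_property} yields $\|\bfx_k-\bfx^*\|^2\le\frac{2}{\mu}(f(\bfx_k)-f^*)\le \widehat{d}_{k-1}^2/\rho^2=d_k^2$; as $\bfx^*\in S_n$, Lemma~\ref{Lemma: Simplex ball}(5) gives $\bfx^*\in S(\bfx_k,d_k)$. Together with $\bfx^*\in S(\widehat{\bfx}_{k-1},\widehat{d}_{k-1})\cap S_n$ from the hypothesis, the intersection formula \eqref{Eq: intsect_simplex_balls} behind the construction of $(\bar{\bfx}_k,\bar{d}_k)$ and $(\widehat{\bfx}_k,\widehat{d}_k)$ gives $\bfx^*\in S(\bfx_k,d_k)\cap S(\widehat{\bfx}_{k-1},\widehat{d}_{k-1})\cap S_n=S(\widehat{\bfx}_k,\widehat{d}_k)$, establishing (a) at step $k$. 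Substituting $\widehat{d}_{k-1}\le 1/(n\rho^{k-1})$ from (c) into $f(\bfx_k)-B_k\le\frac{\mu}{2\rho^2}\widehat{d}_{k-1}^2$ gives the claimed $f(\bfx_k)-B_k\le\frac{\mu}{2n^2\rho^{2k}}$. I expect the main obstacle to be the inner-loop termination argument: one must confirm that the tested gap (not the raw Frank-Wolfe gap) falls below threshold within exactly $J$ steps, which relies on the dominance $C_j\ge C_j^w$, and one must be careful to feed Theorem~\ref{Thm: FW} the shrinking diameter $\sqrt{2}\,n\,\widehat{d}_{k-1}$ of the working ball rather than that of $S_n$, since it is precisely this dependence that calibrates the choice $J=8\rho^2n^2L/\mu$.
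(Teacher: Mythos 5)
Your proposal is correct and follows essentially the same route as the paper's own proof: induction on the invariant $\bfx^*\in S(\widehat{\bfx}_k,\widehat{d}_k)$, Theorem~\ref{Thm: FW} applied to the inner loop over the ball of diameter $\sqrt{2}\,n\,\widehat{d}_{k-1}$ (Lemma~\ref{Lemma: Simplex ball}(4)), quadratic growth \eqref{Eq: strongly_convex_property} plus Lemma~\ref{Lemma: Simplex ball}(5) to re-capture $\bfx^*$ in $S(\bfx_k,d_k)$, and the contraction $\widehat{d}_k\le d_k=\widehat{d}_{k-1}/\rho$ to read off the rate. If anything, your write-up is slightly more explicit than the paper's on two points it leaves implicit, namely carrying $B_k\le f^*$ as part of the induction and tracking the inner-loop bookkeeping via $f(\bfp_{j-1})-C_j\le f(\bfp_{j-1})-C_j^w$, which is the correct reading of the break test given the algorithm's index convention.
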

\begin{proof}
    We first claim that $\bfx^*\in S(\widehat{\bfx}_k,\widehat{d}_k)$ for any $k\geq 0$
    and we prove this by induction.
For $k=0$, we have $(\bar{\bfx}_0, \bar{d}_0) = (\bfx_0, d_0) = (\bfone_n/n, 1/n)$.
By the definition of $S(\widehat{\bfx}_0, \widehat{d}_0)$, we have
\[
  S(\widehat{\bfx}_0, \widehat{d}_0) = S(\bar{\bfx}_0, \bar{d}_0) \cap S_n = S(\bfone_n/n, 1/n) \cap S_n = S_n.
\]
Hence, $\bfx^* \in  S(\widehat{\bfx}_0, \widehat{d}_0)$.
%  By Lemma \ref{Lemma: Simplex ball}(1), we have $S(\widehat{\bfx}_0,\widehat{d}_0)=\mathcal{S}_n=S({\bfx}_0,{d}_0)$, this is true for $k=0$.
  Now suppose that $\bfx^*\in S(\widehat{\bfx}_{k-1},\widehat{d}_{k-1})$ for some $k\geq 1$.
    Note that the inner loop of Alg.~\ref{Alg: rSFW} corresponds to the standard Frank-Wolfe algorithm.
    By Theorem \ref{Thm: FW} and Lemma \ref{Lemma: Simplex ball}(4), which implies that the diameter of
    $S(\widehat{\bfx}_{k-1},\widehat{d}_{k-1})$ is $\sqrt{2}n \widehat{d}_{k-1}$, we have
    \begin{equation*}
        f(\bfp_j)-f^*\leq \frac{2L}{j+1}\left(\sqrt{2}n\widehat{d}_{k-1}\right)^2 = \frac{4Ln^2\widehat{d}_{k-1}^2}{j+1}
    \end{equation*}
    hold for all $j\in [J]$. In the case where the inner loop terminates at $j = J$, we obtain
    \begin{equation*}
        f(\bfx_k)-f^*=f(\bfp_J)-f^*\leq f(\bfp_J)-C_J\leq \frac{2L}{\frac{8\rho^2n^2L}{\mu}}2n^2\widehat{d}_{k-1}^2 = \frac{\mu}{2\rho^2}\widehat{d}_{k-1}^2 = \frac{\mu}{2}d_{k}^2.
    \end{equation*}
    Similarly, if the inner loop is interrupted due to lines 9-11 of the algorithm, we still have
    $$
    f(\bfx_k)-f^*\leq f(\bfp_j)-C_j\leq \frac{\mu}{2\rho^2}\widehat{d}_{k-1}^2= \frac{\mu}{2}d_{k}^2.
    $$
    Using the fact that $f(\bfx_k)-f^*\geq \frac{\mu}{2}\lVert \bfx_k-\bfx^*\rVert^2$, we have $\lVert \bfx_k-\bfx^*\rVert^2\leq d_{k}^2$, which implies via Lemma \ref{Lemma: Simplex ball}(5) that $\bfx^*\in S({\bfx}_k,{d}_k)$. This implies
   \begin{eqnarray*}
     \bfx^* &\in&  S({\bfx}_k,{d}_k)  \cap S(\widehat{\bfx}_{k-1},\widehat{d}_{k-1})
       \qquad (\mbox{as} \ \bfx^* \in S(\widehat{\bfx}_{k-1},\widehat{d}_{k-1}) \ \mbox{by induction})\\
     &=& S({\bfx}_k,{d}_k)  \cap S(\widehat{\bfx}_{k-1},\widehat{d}_{k-1}) \cap S_n  \qquad (\mbox{as}\ \bfx^* \in S_n)\\
     &=& S(\bar{\bfx}_k, \bar{d}_k) \cap S_n
     = S(\widehat{\bfx}_k,\widehat{d}_k) .
   \end{eqnarray*}
    % and thus $\bfx^*\in S(\widehat{\bfx}_k,\widehat{d}_k)=\mathcal{S}_n\cap S({\bfx}_k,{d}_k)$
    %\cap S(\widehat{\bfx}_{k-1},\widehat{d}_{k-1})$.

    We now start to prove the conclusion in Theorem \ref{Thm: Convergence-SL2}. Since $d_0=\frac{1}{n}$ and $\widehat{d}_{k}\leq d_k=\frac{\widehat{d}_{k-1}}{\rho}\leq \frac{d_{k-1}}{\rho}$, we have $\widehat{d}_{k}\leq \frac{1}{n\rho^k}$ and thus
    \begin{equation*}
        f(\bfx_k)-f^*\leq f(\bfx_k)-B_k \leq \frac{\mu}{2}{d}_{k}^2 \leq\frac{\mu}{2n^2\rho^{2k}}.
    \end{equation*}
    We complete the proof.
\end{proof}

\begin{remark}\label{Remark-warm-start}
(Warm-start Strategy)
For rSFW and rSFW${}_\calP$ in the upcoming Section~\ref{Section: gen_P}, when using the simple step size, the initial steps of each inner loop may perform poorly, causing the iteration point $\bfp_j$ far away from the optimal solution.
We found that the following heuristic warm-start strategy is effective in practice.
Let $J_k \ll J$ denote the actual number of inner loop iterations during the $k$-th outer loop iteration.
When initiating the $(k+1)$-th outer loop, instead of starting the inner loop from $j = 1$, begin from either $j = \frac{J_k}{\rho'}$ or $j = \sqrt{\frac{\bar{d}{k}}{\bar{d}{k-1}}} J_k$.
Here $\rho'>1$ is a hyperparameter, with $\rho' = 2$ serving as a reasonable default value.
\end{remark}

\begin{remark}\label{Remark-Growth-Condition}
(Extension to Quadratic Growth Condition)
Although the two main Thms.~\ref{Thm: Convergence-SL1} and
\ref{Thm: Convergence-SL2} are established under the strong convexity of $f(\cdot)$,
we would like to point out that the assumption can be weakened to quadratic growth condition:
\[
  (f(\bfx) - f(\bfx^*))^{1/2} \ge c~\mbox{dist}(\bfx, X^*), \qquad \forall\
  \bfx \in \calP,
\]
where $c>0$ and $X^*$ is the solution set of Problem
\eqref{Problem: general}.
This includes the well-known case $f(\bfx)=g(A\bfx) + \langle \bfb, \bfx\rangle$ with
$g$ strongly convex, $A \in \mathbb{R}^{m\times n}$ and $\bfb \in \mathbb{R}^n$,
for a detailed investigation of this class of functions with FW methods, see
\cite{beck2004conditional}.
In this paper, we did not make effort for such extension as our main purpose
is to introduce SLMO under the strong convexity setting for the sake of simplicity.

\end{remark}

%%%%%%%%%%%%%%%%%%%%%%%%%%%%%%%%%%%%%%%%%%%%%%%%%%%%%%%
\section{Generalization to Arbitrary Polytopes}\label{Section: gen_P}

This section extends the previous results for the unit simplex to arbitrary polytopes. This generalization allows for a broader application of our findings, facilitating their relevance to a wider range of optimization problems.
Important properties between the standard simplex $S_n$ and general polytopes have been established in \cite{garber2016linearly}. Our extension heavily relies on some of those results.
%In particular, LLOO stated in Lemma~\ref{Lemma-SLMO-LLOO} will be extended to the polytope case.
This section is patterned after Section~\ref{Section-SFW} with some details omitted to avoid repeating.
We first define the simplex ball for general polytope and the corresponding
SLMO. We then describe the Simplex Frank-Wolfe for general polytope, followed by
its refined version.

%%%%%%%%%%%%%%%%%%%%%%%%%%%%%%%%%%%%%%
\subsection{Simplex Ball and SLMO for Arbitrary Polytopes} \label{Subsection-SLMO-P}

Consider Problem \eqref{Problem: general} with $\calP=\Conv(\mathcal{V})$ and $\mathcal{V}=\{\bfv_1,\dots,\bfv_N\}$.
Therefore, any given $\bfx \in \calP$ can be represented as a convex combination
of those atoms $\bfv_i$. However, the convex combination may not be unique.
We define a set-valued mapping $\calM$ from $\calP$ to the following set:
\[
 \calM (\bfx) := \left\{
  \bflambda \in S_N \ \left|  \ \bfx = \sum_{i=1}^N \lambda_i \bfv_i  \right.
 \right\} .
\]
Recall that for $\bflambda \in \mathbb{R}^N$ and $d>0$, $S(\bflambda, d)$ is the
simplex ball defined in \eqref{SimplexBall}.
The idea of defining a similar Simplex ball over the polytope can be summarized as follows:
\be \label{SimplexBall-P}
 \bfx \in \calP \ \Longrightarrow \
 \bflambda_x \in \calM(\bfx) \ \Longrightarrow \
 S(\bflambda_x, d) \ \Longrightarrow \
 S_{\calP}(\bfx, d) := \left\{ V\bflambda \ | \ \bflambda \in S(\bflambda_x, d) \right\} ,
\ee
where $V$ consists of the columns $\bfv_i$, $i\in [N]$.
It seems that the Simplex ball $S_{\calP}(\bfx, d)$ depends on a particular choice
of $\bflambda_x \in \calM(\bfx)$.
The following result dismisses this dependence.

\begin{lemma} \label{Lemma-TwoSimplexBall}
	Given $\bfx\in\calP$ and $d>0$, let $\bflambda_x, \bflambda_x'
	\in \calM(\bfx)$. Then for any $\bflambda\in S(\bflambda_x,d)$, there exist $\bflambda'\in S(\bflambda_x',d)$ such that
	\begin{equation*}
		\sum_{i=1}^N\lambda(i)\bfv_i=\sum_{i=1}^N\lambda'(i)\bfv_i.
	\end{equation*}
\end{lemma}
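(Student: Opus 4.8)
The plan is to exploit the fact that the displacement set defining a simplex ball is the same for every center, so that a point of $S(\bflambda_x,d)$ and its intended partner in $S(\bflambda_x',d)$ differ from their respective centers by \emph{identical} vectors; linearity of the atom map $\bflambda\mapsto\sum_i\lambda(i)\bfv_i$ then forces the two images to coincide. No norm or uniqueness of representation is needed.

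Concretely, I would first invoke the characterization \eqref{SimplexBall} (applied in $\mathbb{R}^N$, with $n$ replaced by $N$), which says that every element of a simplex ball is its center plus $d\bfr$ for some $\bfr\in\Conv\{N\bfe_i-\bfone_N:i\in[N]\}$, a set that does \emph{not} depend on the center. Thus, given $\bflambda\in S(\bflambda_x,d)$, I can write $\bflambda=\bflambda_x+d\bfr$ for some such $\bfr$, and then \emph{define} $\bflambda':=\bflambda_x'+d\bfr$ using the very same $\bfr$. By the same characterization $\bflambda'\in S(\bflambda_x',d)$, so $\bflambda'$ is a legitimate candidate in the target simplex ball.

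It then remains to check the required equality. Writing $V=[\bfv_1,\dots,\bfv_N]$ for the matrix of atoms as in \eqref{SimplexBall-P}, the claim $\sum_i\lambda(i)\bfv_i=\sum_i\lambda'(i)\bfv_i$ reads $V\bflambda=V\bflambda'$. Since $V$ is linear and the displacements cancel,
\[
  V\bflambda-V\bflambda'=V\big((\bflambda_x+d\bfr)-(\bflambda_x'+d\bfr)\big)=V(\bflambda_x-\bflambda_x')=\bfx-\bfx=\bfzero,
\]
where the last step uses $\bflambda_x,\bflambda_x'\in\calM(\bfx)$, i.e.\ $V\bflambda_x=V\bflambda_x'=\bfx$. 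This yields $V\bflambda=V\bflambda'$ and completes the argument.

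There is no serious obstacle here: the whole proof hinges on the single observation that the ``shape'' $\{d\bfr\}$ of the simplex ball is common to all centers, after which the conclusion is immediate from linearity. The only point demanding a line of care is verifying that the reconstructed $\bflambda'$ indeed lies in $S(\bflambda_x',d)$, but this is exactly what \eqref{SimplexBall} guarantees, since $\bfr$ ranges over a center-independent set. Equivalently, one may phrase the whole thing as $S_{\calP}(\bfx,d)=\bfx+d\,V\,\Conv\{N\bfe_i-\bfone_N:i\in[N]\}$, a description manifestly independent of the chosen representation $\bflambda_x\in\calM(\bfx)$.
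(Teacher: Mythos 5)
Your proof is correct and is essentially the same as the paper's: your choice $\bflambda' := \bflambda_x' + d\bfr$ coincides exactly with the paper's $\bflambda' := \bflambda - \bflambda_x + \bflambda_x'$, since $d\bfr = \bflambda - \bflambda_x$. Both arguments rest on the same two ingredients---the simplex ball is a translate of a fixed, center-independent set (the paper phrases this as $S(\bflambda_x',d) = S(\bflambda_x,d) - \bflambda_x + \bflambda_x'$, you via the displacement set in \eqref{SimplexBall}), and linearity of $\bflambda \mapsto V\bflambda$ combined with $V\bflambda_x = V\bflambda_x' = \bfx$.
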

\begin{proof}
By definition of $\calM(\bfx)$, we know
\be \label{Eq-lambda}
    \sum_{i=1}^N \lambda_x(i) \bfv_i = \sum_{i=1}^N \lambda_x'(i) \bfv_i.
\ee

It follows from $\bflambda_x' = \bflambda_x - \bflambda_x + \bflambda_x'$ and the definition of Simplex ball \eqref{SimplexBall-New} that
\be \label{Eq-S}
S(\bflambda_x',d)=S(\bflambda_x,d)-\bflambda_x+\bflambda_x'.
\ee
Let $\bflambda' :=\bflambda-\bflambda_x+\bflambda_x'$.
Since $\bflambda \in S(\bflambda_x, d)$, the identity \eqref{Eq-S} implies
$\bflambda'\in S(\bflambda_x',d)$. Moreover, we have
	\begin{equation*}
		\sum_{i=1}^N\lambda'(i)\bfv_i =
		\sum_{i=1}^N(\lambda(i)-\lambda_x(i)+\lambda_x'(i))\bfv_i
		= \sum_{i=1}^N\lambda(i)\bfv_i,
	\end{equation*}
where the last equation used \eqref{Eq-lambda}. This
completes the proof.
\end{proof}

%We formally state the definition of the Simplex ball for $\calP$.

%\begin{definition}[Generalized simplex ball]\label{Def: gen_simplex_ball}
%    For any point $\bfx\in\calP$ and radius $d>0$,
%    choose $\bflambda_x \in \calM(\bfx)$.
%    The generalized simplex ball centered at $\bfx$ with radius $d$ for $\calP$ is defined by
%    \begin{equation*}
%        S_{\calP}(\bfx,d):=\left\{\sum_{i=1}^N\lambda_i\bfv_i
%        \ \left| \ \bflambda\in S(\bflambda_x,d)  \right. \right\}.
%    \end{equation*}
%\end{definition}

Lemma~\ref{Lemma-TwoSimplexBall} ensures that the definition is independent of choice of $\bflambda_x \in \calM(\bfx)$. Hence, the definition is well defined.
Given a linear objective $\bfc\in\mathbb{R}^n$, we extend it to $\bfc_{ext}\in\mathbb{R}^N$ such that $c_{ext}(i)=\langle \bfv_i,\bfc\rangle$ for all $i\in [N]$. Consequently, the following equivalence holds:
\begin{equation*}
    \min_{\bfy\in\calP}\langle \bfy,\bfc\rangle = \min_{\bflambda\in S_N}\langle \bflambda,\bfc_{ext}\rangle.
\end{equation*}
Leveraging this equivalence, we define the generalized SLMO for $\calP$ as follows.

\begin{definition}[$\SLMO_\calP$: SLMO over $\calP$]\label{Def: gen_SLMO}
    Given a linear objective $\bfc\in\mathbb{R}^n$, radius $d>0$, a point $\bfx\in\calP$, and its corresponding  $\bflambda_x\in \calM(\bfx)$, a solution $\bfy^*\in \SLMO_{\calP}(\bfx,d,\bfc, \bflambda_x)$ is referred to as a {generalized simplex-based linear minimization oracle} if
    \[\bfy^*=\sum_{i=1}^N\lambda_i^*\bfv_i,\]
    where $\bflambda^*$ is an optimal solution to the following optimization problem
    \begin{equation}\label{Eq: SLMO_P}
        \begin{aligned}
            &\min\quad \langle \bflambda,\bfc_{ext}\rangle \\
            &\mbox{s.t. }\quad \bflambda\in S(\bflambda_x,d)\cap S_N.
        \end{aligned}
    \end{equation}
%    Here, $\bfc_{ext}\in\mathbb{R}^N$ is the extended objective vector, where $c_{ext}(i)=\langle \bfv_i,\bfc\rangle$ for all $i\in [N]$.
\end{definition}

We note that \eqref{Eq: SLMO_P} can be efficiently solved by Alg.~\ref{Alg: SLMO}. Consequently, $\SLMO_\calP$ can also be efficiently solved provided an element in $\calM(\bfx)$ can be cheaply obtained.
The detailed steps are outlined in Alg.~\ref{Alg: SLMO_P}.

\begin{algorithm}[H]
\footnotesize
	\renewcommand{\algorithmicrequire}{\textbf{Input:}}
	\renewcommand{\algorithmicensure}{\textbf{Output:}}
	\caption{$\SLMO_\calP(\bfx, d, \bfc, \bflambda_x)$}
	\label{Alg: SLMO_P}
	\begin{algorithmic}[1]
        \REQUIRE point $\bfx\in\calP$ with $\bflambda_x \in \calM(\bfx)$,
        linear objective $\bfc\in\mathbb{R}^n$, radius $d>0$.
        \STATE $\widehat{d}\gets\frac{\sum_{i=1}^N\min\{\lambda_x(i),d\}}{n+1}$
        \STATE $\widehat{\bflambda}\gets\bflambda_x-\min\{\bflambda_x,d\bfone_N\}+\widehat{d}\bfone_N$
        \STATE $\bfy_+\gets\sum_{i=1}^N(\widehat{\lambda}_i-\widehat{d})\bfv_i$
        \STATE $\bfv_{i^*} \gets \argmin_{\bfv\in \calP}\langle \bfv,\bfc\rangle$
        \STATE $\bfy^*\gets \bfy_++(n+1)\widehat{d}\bfv_{i^*}$
        \ENSURE $\bfy^*$.
	\end{algorithmic}
\end{algorithm}
One can observe that $\SLMO_\calP$-2---corresponding to Lines 4-5 in Alg.~\ref{Alg: SLMO_P} and serving as the oracle in our subsequent Alg.~\ref{Alg: rSFW-P}---requires only one more vector addition and one extra scalar-vector multiplication compared to the standard LMO.

We summarize the optimality of Alg.~\ref{Alg: SLMO_P} in the following result, which is direct consequence of
Lemma~\ref{Lemma-SLMO}

\begin{lemma} \label{Lemma-SLMOP}
	Algorithm $\SLMO_\calP(\bfx, d, \bfc, \bflambda_x)$ returns an optimal solution $\bfy^*$ for the problem:
\[
  \bfy^* \in \argmin \left\{  \langle \bfc, \bfy \rangle \ | \ \bfy \in S_{\calP} (\bfx, d) \cap \calP \right\}.
\]
\end{lemma}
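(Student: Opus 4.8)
The plan is to push the whole problem into the coefficient space $\mathbb{R}^N$ through the linear map $V=[\bfv_1,\dots,\bfv_N]$ and then invoke Lemma~\ref{Lemma-SLMO} verbatim. Writing $\bfy=V\bflambda$, the objective transforms linearly, $\langle\bfc,V\bflambda\rangle=\langle V^\top\bfc,\bflambda\rangle=\langle\bfc_{ext},\bflambda\rangle$, since $(V^\top\bfc)_i=\langle\bfv_i,\bfc\rangle=c_{ext}(i)$. By the definition \eqref{SimplexBall-P} we have $S_\calP(\bfx,d)=V\!\left(S(\bflambda_x,d)\right)$ and $\calP=V(S_N)$, so the target feasible set $S_\calP(\bfx,d)\cap\calP$ is at once the $V$-image of $S(\bflambda_x,d)$ and of $S_N$. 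The goal is therefore to argue that
\[
 \min_{\bfy\in S_\calP(\bfx,d)\cap\calP}\langle\bfc,\bfy\rangle
 \quad\text{equals}\quad
 \min_{\bflambda\in S(\bflambda_x,d)\cap S_N}\langle\bfc_{ext},\bflambda\rangle ,
\]
the right-hand side being exactly Problem \eqref{Eq: SLMO_P}.

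The first concrete step is to recognise that Alg.~\ref{Alg: SLMO_P} is Alg.~\ref{Alg: SLMO} run in $\mathbb{R}^N$ on the data $(\bflambda_x,d,\bfc_{ext})$, with its output mapped through $V$. Lines~1--2 return $(\widehat{\bflambda},\widehat{d})$ for which $S(\bflambda_x,d)\cap S_N=S(\widehat{\bflambda},\widehat{d})$ by Lemma~\ref{Lemma: Simplex ball}(2); lines~3--5 assemble the closed-form minimiser of Lemma~\ref{Lemma: Simplex ball}(3) over $S(\widehat{\bflambda},\widehat{d})$, using $\bfv_{i^*}=V\bfe_{i^*}$ and $\bfy_+=V(\widehat{\bflambda}-\widehat{d}\bfone_N)$, so the returned point is $\bfy^*=V\bflambda^*$ with $\bflambda^*$ the optimiser of \eqref{Eq: SLMO_P} furnished by Lemma~\ref{Lemma-SLMO}. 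Feasibility of $\bfy^*$ for the target problem is then immediate: $\bflambda^*\in S(\bflambda_x,d)$ gives $\bfy^*\in S_\calP(\bfx,d)$, and $\bflambda^*\in S_N$ gives $\bfy^*\in\calP$.

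The remaining and genuinely delicate step is optimality, and this is where I expect the main obstacle. Because $\bflambda\mapsto V\bflambda$ is not injective, a competitor $\bfy\in S_\calP(\bfx,d)\cap\calP$ only comes with a representation $\bfnu\in S(\bflambda_x,d)$ and a possibly different representation $\bfmu\in S_N$ having the same image $\bfy$, and neither need lie in the common feasible set $S(\bflambda_x,d)\cap S_N$. I would therefore aim to prove the representation property that every $\bfy\in S_\calP(\bfx,d)\cap\calP$ admits a single $\bflambda\in S(\bflambda_x,d)\cap S_N$ with $V\bflambda=\bfy$; once this is in hand, $\langle\bfc,\bfy\rangle=\langle\bfc_{ext},\bflambda\rangle\ge\langle\bfc_{ext},\bflambda^*\rangle=\langle\bfc,\bfy^*\rangle$ closes the argument. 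To establish it I would lean on Lemma~\ref{Lemma-TwoSimplexBall} and its translation identity $S(\bflambda_x',d)=S(\bflambda_x,d)-\bflambda_x+\bflambda_x'$, which transports one coefficient representation to another without leaving the simplex ball; combined with the observation from \eqref{SimplexBall-New} that every member of $S(\bflambda_x,d)$ already satisfies $\bfone_N^\top\bflambda=1$, this confines the search to the fibre $V^{-1}(\bfy)$ and reduces matters to meeting only the nonnegativity constraints defining $S_N$. Concentrating the nontriviality of the non-norm-based construction into this single lemma is, I expect, the crux of the proof.
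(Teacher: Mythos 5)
Your first two paragraphs reproduce, in more detail, the entirety of the paper's own justification: the paper gives no proof of this lemma at all beyond declaring it a ``direct consequence of Lemma~\ref{Lemma-SLMO}'', which is precisely your reduction to coefficient space plus the feasibility observation. (One small correction to your ``verbatim'' claim: Line~1 of Alg.~\ref{Alg: SLMO_P} divides by $n+1$ rather than $N$, so the pair $(\widehat{\bflambda},\widehat{d})$ it builds does \emph{not} satisfy $S(\bflambda_x,d)\cap S_N=S(\widehat{\bflambda},\widehat{d})$; but since $\widehat{d}$ enters the output only through $\widehat{\lambda}_i-\widehat{d}=\lambda_x(i)-\min\{\lambda_x(i),d\}$ and through $(n+1)\widehat{d}=\sum_{i}\min\{\lambda_x(i),d\}$, the returned point is the same as if one had divided by $N$, so this is harmless.) The real issue is your third paragraph: you correctly isolate the missing step --- that every $\bfy\in S_\calP(\bfx,d)\cap\calP$ should admit a \emph{single} representative $\bflambda\in S(\bflambda_x,d)\cap S_N$ with $V\bflambda=\bfy$ --- but you do not prove it, and the route you sketch cannot prove it. The translation $\bflambda'=\bflambda-\bflambda_x+\bflambda_x'$ of Lemma~\ref{Lemma-TwoSimplexBall} moves along the fibre $V^{-1}(\bfy)$ and preserves membership in the simplex ball, but it destroys nonnegativity, and nothing forces the fibre to meet $S(\bflambda_x,d)$ and $S_N$ at a \emph{common} point.

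In fact the representation property you need is false, and with it the lemma as literally stated. Take $n=1$, $N=3$, $\bfv_1=0$, $\bfv_2=1$, $\bfv_3=2$, so $\calP=[0,2]$; let $\bfx=1$ with $\bflambda_x=(\tfrac12,0,\tfrac12)$ (admissible under Remark~\ref{Remark-SLMO-P}, having $n+1=2$ positive entries), $d=\tfrac14$, and $c=1$, so $\bfc_{ext}=(0,1,2)$. By \eqref{SimplexBall-New}, $\bfz\in S(\bflambda_x,d)$ iff $\bfz\geq(\tfrac14,-\tfrac14,\tfrac14)$ componentwise and $\bfone_3^\top\bfz=1$. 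The point $\bfy=\tfrac14\in\calP$ lies in $S_\calP(\bfx,d)$, witnessed by $\bfz=(1,-\tfrac14,\tfrac14)$, so $\tfrac14$ is feasible for the problem in the lemma. Yet every $\bflambda\in S(\bflambda_x,d)\cap S_3$ has $\lambda_2\geq 0$ and $\lambda_3\geq\tfrac14$, hence $V\bflambda=\lambda_2+2\lambda_3\geq\tfrac12$; and running Alg.~\ref{Alg: SLMO_P} indeed returns $\bfy^*=\tfrac12$. So the oracle is optimal for Problem \eqref{Eq: SLMO_P} --- which is all that Lemma~\ref{Lemma-SLMO} gives and all that Definition~\ref{Def: gen_SLMO} demands --- but not for $\min\{\langle\bfc,\bfy\rangle \mid \bfy\in S_\calP(\bfx,d)\cap\calP\}$: the obstruction is exactly that $V(A\cap B)\subsetneq V(A)\cap V(B)$ for non-injective $V$. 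Your suspicion therefore \emph{understates} the difficulty: the gap is not closable, and the paper's one-line derivation silently assumes the false identification. What is true, and what the downstream convergence analysis actually uses, is the local statement of Lemma~\ref{Lemma: SLMO_P}: any $\bfy\in\calP$ with $\|\bfx-\bfy\|\leq dD/\eta$ \emph{does} admit a representative in $S(\bflambda_x,d)\cap S_N$ (the coefficients $\lambda_i-\Delta_i+\Delta\lambda_i'$ constructed there are nonnegative), whence $\langle\bfc,\bfy^*\rangle\leq\langle\bfc,\bfy\rangle$ for all such $\bfy$. A correct version of the present lemma should therefore either replace the feasible set by $\{V\bflambda \mid \bflambda\in S(\bflambda_x,d)\cap S_N\}$ (as in Table~\ref{table: LMOs}) or be stated only in the restricted form of Lemma~\ref{Lemma: SLMO_P}.
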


\begin{remark} \label{Remark-SLMO-P}
(Carath\'{e}odory's Representation Assumption)
By Carath\'{e}odory's \\ Representation Theorem \cite[Thm. 17.1]{rockafellar1997convex}, for any point $\bfx\in\calP$, there exists $\bflambda_x \in \calM(\bfx)$ such that $|\mathcal{I}_+(\bflambda_x)| \leq n + 1$ where $\mathcal{I}_+(\bflambda_x):=\{i \in [N] \mid \lambda_x(i) > 0\}$.
As demonstrated in the illustrative examples in Supplement \ref{Section: app_Cara_rep}, this representation can be easily implemented for common types of $\calP$.
In this representation, the running time of $\SLMO_\calP$ does not explicitly depend on the number of vertices $N$, but rather on the natural dimension of $\calP$, that is, $n$.
For the analysis in the subsequent sections, we assume without loss of generality that the selected $\bflambda_x$ always satisfies $|\mathcal{I}_+(\bflambda_x)| \leq n + 1$.
\end{remark}

%Here we make several remarks.
%Firstly, it is important to note that, the running time of Alg. \ref{Alg: SLMO_P} doesn't explicitly depend on the number of vertices $N$, but rather on the number of nonzero entries in the vector $\bflambda_x$. Therefore, it can be upper bounded by the natural dimension of $\calP$ --- $n$.

% Secondly, it should be noted that even if the condition $|\{i\in [N]\mid \lambda_x(i)>0\}| \leq n+1$ in line 1 is challenging to satisfy, removing this condition will not compromise the correctness of the algorithm or subsequent algorithms. Instead, it will only result in a slower execution speed.

%Moreover, we can divide Alg.~\ref{Alg: SLMO_P} into two parts in the same way as in Alg.~\ref{Alg: SLMO}. $SLMO_{\calP}-1$ corresponds to Line 1-3 of Alg. \ref{Alg: SLMO}, while $SLMO_{\calP}-2$ corresponds to Line 4-5. This division will be useful when generalizing our rSFW method to polytope case. Also note that $\SLMO_{\calP}-2$ is equivalent to a single call to the linear optimization oracle for the polytope.
%
The following lemma demonstrates some useful properties of $S_{\calP}$ and  $\SLMO_{\calP}$, which can be regarded as a generalization of Lemma \ref{Lemma: Simplex ball}(5) and is crucial for proving the convergence of our algorithm in the next subsection. The detailed proof can be found in Supplement \ref{Section: app_prove_gen_P}.

\begin{lemma}\label{Lemma: SLMO_P}
    Given $\bfx\in\calP$, $d>0$ and $\bfy^*\in \SLMO_{\calP}(\bfx,d,\bfc,\bflambda_x)$, for any point $\bfy\in\calP$ satisfying $\|\bfx-\bfy\|\leq \frac{dD}{\eta}$, it follows that $\bfy\in S_{\calP}(\bfx,d)$ and $\langle \bfc,\bfy^* \rangle\leq \langle \bfc,\bfy \rangle$. Furthermore, we have $\|\bfx-\bfy^*\|\leq (n+1)dD$.
\end{lemma}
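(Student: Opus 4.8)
The plan is to push everything into the coefficient space $\mathbb{R}^N$, where $S_{\calP}(\bfx,d)$ is by \eqref{SimplexBall-P} the image under $V$ of the genuine simplex ball $S(\bflambda_x,d)$, and to exploit the explicit half-space description of a simplex ball. From Definition~\ref{Def: simplex_ball} one checks that for $\bflambda_x\in S_N$,
\[
  S(\bflambda_x,d)=\Big\{\bflambda\in\mathbb{R}^N : \textstyle\sum_i\lambda(i)=1,\ \lambda(i)\ge \lambda_x(i)-d \ \text{for all } i\Big\},
\]
so that membership of a point of $S_N$ in $S(\bflambda_x,d)$ is equivalent to the coordinatewise slack bound $\lambda_x(i)-\lambda(i)\le d$ for every $i$. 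All three assertions will follow from this description once it is combined with (i) a bound translating Euclidean displacement in $\calP$ into coefficient change, and (ii) the Carath\'eodory support bound $|\mathcal{I}_+(\bflambda_x)|\le n+1$ from Remark~\ref{Remark-SLMO-P}.

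For the first assertion I would exhibit a representation $\bflambda_y\in\calM(\bfy)$ lying in $S(\bflambda_x,d)$. This is where the geometry of $\calP$ enters and is the crux of the lemma: I invoke the decomposition result of Garber and Hazan \cite{garber2016linearly}, which, given the representation $\bflambda_x$ of $\bfx$ and any $\bfy\in\calP$, produces $\bflambda_y\in\calM(\bfy)$ with a coefficient-change bound of the form
\[
  \sum_{i}\big[\lambda_x(i)-\lambda_y(i)\big]_+ \ \le\ \tfrac{\eta}{D}\,\|\bfx-\bfy\| \ =\ \tfrac{\psi}{\xi}\,\|\bfx-\bfy\|.
\]
Under the hypothesis $\|\bfx-\bfy\|\le dD/\eta$ the right-hand side is at most $d$, hence $\max_i\big(\lambda_x(i)-\lambda_y(i)\big)\le d$, and the half-space description gives $\bflambda_y\in S(\bflambda_x,d)$; applying $V$ yields $\bfy=V\bflambda_y\in S_{\calP}(\bfx,d)$. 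I expect carefully invoking (or reproducing) this displacement-to-coefficient bound—tracking the exact constant $\eta/D$ through the active-constraint/slack argument built on $\psi$ and $\xi$—to be the main technical obstacle; the rest is bookkeeping. The second assertion is then immediate: by Lemma~\ref{Lemma-SLMOP}, $\bfy^*$ minimizes $\langle\bfc,\cdot\rangle$ over $S_{\calP}(\bfx,d)\cap\calP$, and the first assertion together with $\bfy\in\calP$ places $\bfy$ in that set, so $\langle\bfc,\bfy^*\rangle\le\langle\bfc,\bfy\rangle$.

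For the third assertion I write $\bfy^*-\bfx=V(\bflambda^*-\bflambda_x)$, where $\bflambda^*\in S(\bflambda_x,d)\cap S_N$ is the optimal coefficient vector returned by Alg.~\ref{Alg: SLMO_P}. Setting $\bfw:=\bflambda^*-\bflambda_x$ and using $\sum_i w(i)=0$, I split $\bfw$ into its positive and negative parts and factor $V\bfw=s(\bfu_+-\bfu_-)$ with $\bfu_+,\bfu_-\in\calP$ and $s:=\sum_i[\lambda_x(i)-\lambda^*(i)]_+$, so that $\|\bfy^*-\bfx\|=s\,\|\bfu_+-\bfu_-\|\le sD$. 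It remains to bound $s$: since $\bflambda^*\in S(\bflambda_x,d)$ forces $\lambda_x(i)-\lambda^*(i)\le d$ for every $i$, and a coordinate can contribute to $s$ only when $\lambda_x(i)>\lambda^*(i)\ge 0$, i.e.\ only for $i\in\mathcal{I}_+(\bflambda_x)$, the Carath\'eodory bound $|\mathcal{I}_+(\bflambda_x)|\le n+1$ gives $s\le (n+1)d$. Combining, $\|\bfx-\bfy^*\|\le sD\le (n+1)dD$, which completes the proof.
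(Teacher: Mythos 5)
Your overall architecture mirrors the paper's: work in the coefficient space $\mathbb{R}^N$, characterize $S(\bflambda_x,d)$ by its half-space description (which you state correctly), obtain the optimality inequality from Lemma~\ref{Lemma-SLMOP}, and bound $\|\bfx-\bfy^*\|$ via the Carath\'eodory support bound $|\mathcal{I}_+(\bflambda_x)|\le n+1$. Your third part is correct and essentially the paper's own argument (the paper just instantiates the optimal coefficients explicitly from Alg.~\ref{Alg: SLMO_P} and bounds $\sum_{i\in\mathcal{I}_+(\bflambda_x)}\min\{\lambda_x(i),d\}\,\|\bfv_i-\bfv_{i^*}\|\le(n+1)dD$), and the second part is a legitimate appeal to Lemma~\ref{Lemma-SLMOP}.

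The genuine gap is the bound you attribute to Garber--Hazan. The inequality
\[
  \sum_i\big[\lambda_x(i)-\lambda_y(i)\big]_+\ \le\ \frac{\psi}{\xi}\,\|\bfx-\bfy\|
\]
is not what \cite{garber2016linearly} proves, and it is false in general. Their decomposition lemma bounds the \emph{sum} of the decrements only up to an additional dimension-dependent factor; the constant $\psi/\xi$ is achievable only for the \emph{maximum} (in fact the $\ell_2$ norm) of the decrements, and proving that refinement is exactly the content of the paper's Lemma~\ref{Lemma: Delta_bound} --- a nontrivial argument via the active-constraint sets $C_0(\bfz)$, not a citation. To see that your $\ell_1$-form fails, take $\calP=S_N$ itself (so $\psi=\xi=1$ and coefficient representations are unique), $\lambda_x(i)=2/N$ for $i\le N/2$ and $0$ otherwise, and $\lambda_y(i)=0$ for $i\le N/2$ and $2/N$ otherwise: the left-hand side equals $1$ while $\frac{\psi}{\xi}\|\bfx-\bfy\|=2/\sqrt{N}\to 0$. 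Your downstream reasoning only uses the consequence $\max_i\big(\lambda_x(i)-\lambda_y(i)\big)\le d$, which is true, so the proof is repairable: apply Lemma~\ref{Lemma: Delta_bound} to the minimal-$\Delta$ decomposition $\bfy=\sum_i(\lambda_x(i)-\Delta_i)\bfv_i+\Delta\bfz$, getting $\max_i\Delta_i\le\frac{\psi}{\xi}\|\bfx-\bfy\|\le d$ and hence $\lambda_x(i)-\lambda_y(i)\le\Delta_i\le d$ for the induced representation $\lambda_y(i)=\lambda_x(i)-\Delta_i+\Delta\lambda_i'$. But that repair is precisely the main technical content of the lemma, which you deferred while stating an incorrect version of what needs to be proved.
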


We also like to note that, though similar to LLOO, $\SLMO_{\calP}(\bfx,d,\bfc,\bflambda_x)$ is not exactly an LLOO because Lemma~\ref{Lemma: SLMO_P} only proves that $\mathbb{B}(\bfx, (D/\eta) d) \subseteq S_{\calP}(\bfx,d)$, not $\mathbb{B}(\bfx, d) \subseteq S_{\calP}(\bfx,d)$ which would be sufficient for the first
property of LLOO.
We note that $D/\eta = \xi/\psi$. Therefore, the condition $\xi/\psi\ge 1$ would be enough for $\SLMO_{\calP}(\bfx,d,\bfc,\bflambda_x)$ to be LLOO.

%%%%%%%%%%%%%%%%%%%%%%%%%
\subsection{\texorpdfstring{$\SFW_\calP$: Simplex Frank-Wolfe for Arbitrary Polytopes}{SFW-P: Simplex Frank-Wolfe for Arbitrary Polytopes}}

In this subsection, we extend the SFW to the polytope case. The generalized SFW algorithm is presented as follows.
\begin{algorithm}[!ht]
\footnotesize
\renewcommand{\algorithmicrequire}{\textbf{Input:}}
\renewcommand{\algorithmicensure}{\textbf{Output:}}
\caption{$\SFW_{\calP}$: Simplex Frank-Wolfe Method for Polytope $\calP$}
\label{Alg: SFW_P}
\begin{algorithmic}[1]
    \REQUIRE $\bfx_0\in S_n$, initial lower bound $B_0$, condition number $\eta$ and diameter $D$ of $\calP$.
    \STATE Set: $d_0\gets\sqrt{\frac{2(f(\bfx_0)-B_0)}{\mu}},\bflambda_0 \in \calM(\bfx_0).$
    \FOR{$k=1,\dots$}

        \STATE Compute $\bfy_k\in \SLMO_{\calP}({\bfx}_{k-1},\frac{\eta}{D}{d}_{k-1},\nabla f(\bfx_{k-1}),\bflambda_{k-1})$.

        \STATE Compute the working lower bound: $B_k^w\gets f(\bfx_{k-1})+\langle\nabla f(\bfx_{k-1}),\bfy_k-\bfx_{k-1}\rangle$.

        \STATE Update best bound $B_k\gets \max\{B_{k-1}, B_k^w \}$.

        \STATE Set $\bfx_k\gets (1-\delta_k)\bfx_{k-1}+\delta_k\bfy_k$ for some $\delta_k\in [0,1]$.

        % \STATE Set: $d_k\gets \min\{e^{-\frac{\mu}{8Ln^2}}{d}_{k-1}, \sqrt{\frac{2(f(\bfx_k)-B_k)}{\mu}}\}$.

        \STATE Set: $d_k\gets \sqrt{\frac{2(f(\bfx_k)-B_k)}{\mu}},\bflambda_k \in \calM(\bfx_k)$.
        % \STATE Find $(\bar{\bfx}_k,\bar{d}_k)$ such that $S(\bar{\bfx}_k,\bar{d}_k)=S(\bfx_k,d_k)\cap S(\bar{\bfx}_{k-1},\bar{d}_{k-1})$ by using \eqref{Eq: intsect_simplex_balls}.
    \ENDFOR
\end{algorithmic}
\end{algorithm}

Notice that when $\calP$ degenerates to $S_n$, the algorithm differs from Alg.~\ref{Alg: SFW_P} only slightly at line 7 since $\eta=D = \sqrt{2}$ for $S_n$.
%Similar to Theorem \ref{Thm: Convergence-SL1}, we can provide the following convergence analysis for Alg.~\ref{Alg: SFW_P}, which is proven in Supplement \ref{Section: app_prove_gen_P}.
The convergence for Alg.~\ref{Alg: SFW_P} stated below is proved in Supplement \ref{Section: app_prove_gen_P}.

\begin{theorem}\label{Thm: Convergence_sFW_P}
    Let $\{\bfx_k\}$ be the sequences generated by Alg.~\ref{Alg: SFW_P} with step size policy for $\{\delta_k\}$ in \eqref{Eq: step size_line}, \eqref{Eq: step size_smooth}, or simple step size
    \begin{equation}\label{Eq: step size_constant3}
        \delta_k=\frac{\mu}{2L(n+1)^2\eta^2}.
    \end{equation}
    Then, for $k\geq 0$, we have
    \begin{equation}\label{Eq: Convergence_sfw_P}
        f(\bfx_k) -f^*\leq f(\bfx_k) - B_k\leq \frac{\mu d_0^2}{2}e^{-\frac{\mu}{4L\eta^2(n+1)^2}k}.
    \end{equation}
\end{theorem}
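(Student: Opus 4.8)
The plan is to run the inductive argument of Theorem~\ref{Thm: Convergence-SL1} essentially verbatim, swapping the simplex-ball tool Lemma~\ref{Lemma: Simplex ball}(5) for its polytope counterpart Lemma~\ref{Lemma: SLMO_P}, and carefully tracking the radius rescaling by $\eta/D$ that appears in Line~3 of Alg.~\ref{Alg: SFW_P}. The inductive claim I would carry is: for every $k\ge 0$, $\bfx^*\in S_{\calP}(\bfx_k,\frac{\eta}{D}d_k)$ and $f(\bfx_k)-B_k\le \frac{\mu}{2}d_k^2$. The second part holds by the very definition $d_k=\sqrt{2(f(\bfx_k)-B_k)/\mu}$ once we know $B_k\le f^*$; hence the real content is the containment statement together with the validity of the lower bounds $B_k$.

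For the base case I would argue as before: $\frac{\mu d_0^2}{2}=f(\bfx_0)-B_0\ge f(\bfx_0)-f^*\ge \frac{\mu}{2}\|\bfx_0-\bfx^*\|^2$ by \eqref{Eq: strongly_convex_property}, giving $\|\bfx_0-\bfx^*\|\le d_0$; then Lemma~\ref{Lemma: SLMO_P} applied with radius $\frac{\eta}{D}d_0$---whose containment threshold is $\frac{(\eta/D)d_0\cdot D}{\eta}=d_0$---yields $\bfx^*\in S_{\calP}(\bfx_0,\frac{\eta}{D}d_0)$. For the inductive step, the hypothesis $\bfx^*\in S_{\calP}(\bfx_{k-1},\frac{\eta}{D}d_{k-1})$ together with $\bfx^*\in\calP$ places $\bfx^*$ in the feasible set over which $\bfy_k$ is the linear minimizer, so the convexity chain $f^*\ge f(\bfx_{k-1})+\langle\nabla f(\bfx_{k-1}),\bfx^*-\bfx_{k-1}\rangle\ge f(\bfx_{k-1})+\langle\nabla f(\bfx_{k-1}),\bfy_k-\bfx_{k-1}\rangle=B_k^w$ certifies $B_k^w\le f^*$ and hence $B_k\le f^*$. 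The three step-size cases then collapse, exactly as in Theorem~\ref{Thm: Convergence-SL1}, to the single descent inequality $f(\bfx_k)-B_k\le (1-\gamma)(f(\bfx_{k-1})-B_{k-1})+\frac{L\gamma^2}{2}\|\bfy_k-\bfx_{k-1}\|^2$ with $\gamma=\frac{\mu}{2L(n+1)^2\eta^2}$. Substituting the inductive bound $f(\bfx_{k-1})-B_{k-1}\le \frac{\mu}{2}d_{k-1}^2$ and the diameter estimate $\|\bfy_k-\bfx_{k-1}\|\le (n+1)\eta d_{k-1}$ (the second half of Lemma~\ref{Lemma: SLMO_P} with radius $\frac{\eta}{D}d_{k-1}$), the bracketed coefficient collapses to $\frac{\mu}{2}\bigl(1-\frac{\mu}{4L(n+1)^2\eta^2}\bigr)$ for precisely this $\gamma$. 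Finally $B_k\le f^*$ and \eqref{Eq: strongly_convex_property} give $\|\bfx_k-\bfx^*\|^2\le \frac{2}{\mu}(f(\bfx_k)-B_k)=d_k^2$, reestablishing the containment via Lemma~\ref{Lemma: SLMO_P} and closing the induction; the per-step contraction $d_k\le e^{-\frac{\mu}{8L(n+1)^2\eta^2}}d_{k-1}$ then telescopes to the stated bound.

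The main obstacle---and the only place where the polytope case genuinely diverges from the simplex case---is keeping the radius rescaling consistent across its two uses. Because $S_{\calP}$ is not induced by a norm, Lemma~\ref{Lemma: SLMO_P} sandwiches it between Euclidean balls of radii $(D/\eta)d$ and $(n+1)dD$ rather than $d$ and $nd$; feeding radius $\frac{\eta}{D}d_{k-1}$ into $\SLMO_\calP$ is exactly the rescaling that makes the inner radius equal $d_{k-1}$ (so the strong-convexity ball around $\bfx^*$ is captured, validating $B_k^w$) while inflating the outer radius to $(n+1)\eta d_{k-1}$ (the diameter factor that forces the replacement $n\mapsto (n+1)\eta$ in the step size). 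I would verify that this single choice simultaneously powers both invocations of Lemma~\ref{Lemma: SLMO_P}, and that when $\eta=D=\sqrt2$ the rescaling factor $\eta/D$ becomes $1$, so the construction reduces to the no-rescaling simplex setting of Alg.~\ref{Alg: SFW} and the bound matches Theorem~\ref{Thm: Convergence-SL1} in form (up to the cruder constant reflecting the Carath\'eodory dimension $n+1$). Everything beyond this is routine arithmetic substitution of $(n+1)\eta$ for $n$.
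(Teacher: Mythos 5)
Your proposal is correct and follows essentially the same route as the paper's own proof in the Supplement: the same two-part inductive claim ($\bfx^*\in S_{\calP}(\bfx_k,\tfrac{\eta}{D}d_k)$ and $f(\bfx_k)-B_k\le\tfrac{\mu}{2}d_k^2$), the same reduction of all three step sizes to one descent inequality with $\gamma=\tfrac{\mu}{2L(n+1)^2\eta^2}$, and the same two invocations of Lemma~\ref{Lemma: SLMO_P} (inner ball of radius $d_{k-1}$ for validity of the lower bounds, outer bound $(n+1)\eta d_{k-1}$ for the quadratic term), with the contraction $d_k\le e^{-\mu/(8L(n+1)^2\eta^2)}d_{k-1}$ telescoping to the stated rate. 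Your explicit bookkeeping of the $\eta/D$ rescaling is exactly what the paper does implicitly, so there is nothing to add.
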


%%%%%%%%%%%%%%%%%%%%%%%%%%%%%%%%%
\subsection{\texorpdfstring{$\mbox{rSFW}_\calP$: Refining $\SFW_\calP$}{rSFW-P: Refining SFW-P}}

As with the motivation for rSFW for the Simplex case, once we constructed the
Simplex ball for $\calP$, we may compute an approximate solution:
\[
  \bfp_k \approx \argmin \; f(\bfp) \quad \mbox{s.t.} \quad
  \bfp \in S(\bfx_{k-1}, d_{k-1}) \cap S_N ,
\]
with the initial point $\bfp_0 = \bfx_{k-1}$.
The motivation is based on a similar observation that $\SFW_\calP$ can be split into
two independent parts with the first part of constructing the Simplex ball being
the major computation. Hence, once such a ball is constructed we run a few more cheap $\SLMO$ steps over this ball.
Once again, other methods such as Away-step FW and pairwise FW can be used for computing $\bfp_k$.
The generalized rSFW algorithm is presented as follows.

\begin{algorithm}[!ht]
\footnotesize
\renewcommand{\algorithmicrequire}{\textbf{Input:}}
\renewcommand{\algorithmicensure}{\textbf{Output:}}
\caption{$\mbox{rSFW}_\calP$:  Refined Simplex Frank-Wolfe Method for Polytope $\calP$}
\label{Alg: rSFW-P}
\begin{algorithmic}[1]
    \REQUIRE $\bfx_0\in\calP$, $\bflambda_0 \in \calM(\bfx_0)$,
    radius contraction ratio $\rho>1$, initial lower bound $B_0$, condition number $\eta$ and diameter $D$ of $\calP$.
    \STATE Set: $d_0\gets\frac{\eta}{D}\sqrt{\frac{2(f(\bfx_0)-B_0)}{\mu}}, J\gets\frac{4\rho^2(n+1)^2\eta^2L}{\mu}$.
    \FOR{$k=1,\dots$}
        \STATE Set: $\bfp_0\gets\bfx_{k-1},C_0\gets B_{k-1}$.
        \STATE ($\SLMO_{\calP}$-1)
         Compute $\widehat{\bflambda}_{k-1}$ and $\widehat{d}_{k-1}$ such that $S(\widehat{\bflambda}_{k-1},\widehat{d}_{k-1})=S({{\bflambda}}_{k-1},{d}_{k-1})\cap S_N$.
        \FOR{$j=1,\dots,J$}
            \STATE ($\SLMO_{\calP}$-2)
            Compute $\bfy_j\in \SLMO_{\calP}(\bfx_{k-1},d_{k-1},\nabla f(\bfp_{j-1}),\bflambda_{k-1})$.
            \STATE Set: $C_j^w\gets f(\bfp_{j-1})+\langle \nabla f(\bfp_{j-1}),\bfy_j-\bfp_{j-1} \rangle$.
            \STATE  Update best bound $C_j\gets \max\{C_{j-1},C_j^w\}$.
            \IF{$f(\bfp_j)-C_j\leq \frac{\mu}{2\rho^2\eta^2}{d}_{k-1}^2D^2$}
                \STATE Break out of the inner loop.
            \ENDIF
            \STATE Set $\bfp_j\gets(1-\delta_j)\bfp_{j-1}+\delta_j\bfy_j$ for some $\delta_j\in [0,1]$.
        \ENDFOR
        \STATE Set: $\bfx_k\gets\bfp_j$, $d_k\gets\frac{{d}_{k-1}}{\rho}$,
        $B_k\gets C_j$ and $\bflambda_k \in \calM(\bfx_k)$.
    \ENDFOR
\end{algorithmic}
\end{algorithm}
Similar to Theorem \ref{Thm: Convergence-SL2}, we can provide the following convergence analysis for Alg.~\ref{Alg: rSFW-P}, which is proven in Supplement~\ref{Section: app_prove_gen_P}.
\begin{theorem}\label{Thm: Convergence_Sp_sFW_P}
    Let $\{\bfx_k\}$ be the sequences generated by Alg.~\ref{Alg: rSFW-P} with step size policy for $\{\delta_j\}$ in \eqref{Eq: step size_const}-\eqref{Eq: step size_smooth}. Then, for $k\geq 1$, we have
    \begin{equation*}
        f(\bfx_k) -f^*\leq f(\bfx_k)-B_k\leq (f(\bfx_0)-B_0)\rho^{-2k}.
    \end{equation*}
\end{theorem}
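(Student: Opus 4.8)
The plan is to follow the induction behind Theorem~\ref{Thm: Convergence-SL2}, replacing the simplex-ball calculus of Lemma~\ref{Lemma: Simplex ball} by its polytope version in Lemma~\ref{Lemma: SLMO_P} and carrying the two geometric factors $D/\eta$ and $(n+1)D$ that the latter introduces. I would prove by induction on $k$ the single invariant $\bfx^* \in S_{\calP}(\bfx_k, d_k)$. Granting this, each step also yields the suboptimality estimate $f(\bfx_k) - B_k \le \frac{\mu D^2}{2\eta^2} d_k^2$; since the outer loop contracts $d_k = d_{k-1}/\rho$, we get $d_k = d_0 \rho^{-k}$, and the initialization $d_0 = \frac{\eta}{D}\sqrt{2(f(\bfx_0)-B_0)/\mu}$, i.e.\ $\frac{\mu D^2}{2\eta^2} d_0^2 = f(\bfx_0) - B_0$, turns this into the claimed rate $f(\bfx_k) - B_k \le (f(\bfx_0)-B_0)\rho^{-2k}$. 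The left inequality $f(\bfx_k) - f^* \le f(\bfx_k) - B_k$ follows once each $B_k$ is shown to be a genuine lower bound of $f^*$.

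For the base case, $B_0 \le f^*$ together with strong convexity \eqref{Eq: strongly_convex_property} gives $\frac{\mu}{2}\|\bfx_0 - \bfx^*\|^2 \le f(\bfx_0) - B_0 = \frac{\mu D^2}{2\eta^2} d_0^2$, hence $\|\bfx_0 - \bfx^*\| \le d_0 D/\eta$, and the containment half of Lemma~\ref{Lemma: SLMO_P} (that $\|\bfx - \bfy\| \le dD/\eta$ and $\bfy \in \calP$ force $\bfy \in S_{\calP}(\bfx, d)$) yields $\bfx^* \in S_{\calP}(\bfx_0, d_0)$. In the inductive step the hypothesis $\bfx^* \in S_{\calP}(\bfx_{k-1}, d_{k-1})$ and $\bfx^* \in \calP$ place $\bfx^*$ in the local feasible set $\calP_{k-1} := S_{\calP}(\bfx_{k-1}, d_{k-1}) \cap \calP$, which by Lemma~\ref{Lemma-SLMOP} is exactly the set the inner loop's $\SLMO_{\calP}$ optimizes over. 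Consequently $\min_{\calP_{k-1}} f = f^*$, and each working bound $C_j^w = f(\bfp_{j-1}) + \langle \nabla f(\bfp_{j-1}), \bfy_j - \bfp_{j-1}\rangle$ satisfies $C_j^w \le f^*$ by convexity and the optimality of $\bfy_j$ over $\calP_{k-1}$; with $C_0 = B_{k-1} \le f^*$ this gives $B_k = C_j \le f^*$.

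The core is the inner loop, which is plain Frank--Wolfe on $\calP_{k-1}$, and I want to show that in either exit mode it delivers $f(\bfx_k) - B_k \le \frac{\mu}{2\rho^2\eta^2} d_{k-1}^2 D^2 = \frac{\mu D^2}{2\eta^2} d_k^2$. If it breaks through the test on Line~9 this is immediate. If it exhausts the budget $j = J$, I would invoke Theorem~\ref{Thm: FW}, which bounds both the suboptimality and the Frank--Wolfe gap by $2L\,D(\calP_{k-1})^2/(j+1)$, so that $f(\bfp_J) - C_J \le 2L\,D(\calP_{k-1})^2/J$; substituting $J = 4\rho^2(n+1)^2\eta^2 L/\mu$ reduces the target inequality to the diameter bound $D(\calP_{k-1}) \le (n+1)d_{k-1}D$. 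Finally, combining $f(\bfx_k) - B_k \le \frac{\mu D^2}{2\eta^2} d_k^2$ with $\frac{\mu}{2}\|\bfx_k - \bfx^*\|^2 \le f(\bfx_k) - f^* \le f(\bfx_k) - B_k$ gives $\|\bfx_k - \bfx^*\| \le d_k D/\eta$, and Lemma~\ref{Lemma: SLMO_P} once more yields $\bfx^* \in S_{\calP}(\bfx_k, d_k)$, closing the induction.

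The step I expect to be the main obstacle is calibrating $D(\calP_{k-1})$ against the budget $J$. Lemma~\ref{Lemma: SLMO_P} directly controls only the center-to-vertex radius $\|\bfx_{k-1} - \bfy^*\| \le (n+1)d_{k-1}D$, whereas the Frank--Wolfe rate needs the full diameter of the image set $S_{\calP}(\bfx_{k-1}, d_{k-1}) \cap \calP$; a naive triangle-inequality doubling of the radius is too lossy to match the constant baked into $J$, so the delicate point is the polytope analog of the exact diameter formula $\sqrt{2}nd$ in Lemma~\ref{Lemma: Simplex ball}(4). Once this diameter estimate is in hand, the remainder is a transcription of the simplex argument under the substitutions $n \mapsto n+1$, Euclidean radius $nd \mapsto (n+1)dD$, and containment radius $d \mapsto dD/\eta$.
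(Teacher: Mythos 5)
Your proposal follows the paper's own route almost step for step: the induction on the invariant $\bfx^* \in S_{\calP}(\bfx_k, d_k)$, the base case via strong convexity plus the containment half of Lemma~\ref{Lemma: SLMO_P}, the validity of the lower bounds $C_j^w \le f^*$ (hence $B_k \le f^*$) because $\bfx^*$ lies in the inner feasible set, the two exit modes of the inner loop calibrated against $J$, and the closing step $\|\bfx_k-\bfx^*\| \le d_k D/\eta \Rightarrow \bfx^* \in S_{\calP}(\bfx_k,d_k)$ are all exactly what the paper does, and your arithmetic (in particular $\frac{\mu D^2}{2\eta^2}d_0^2 = f(\bfx_0)-B_0$ and $d_k = d_0\rho^{-k}$) is correct. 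You also correctly diagnose that everything reduces to the diameter bound $D\bigl(S_{\calP}(\bfx_{k-1},d_{k-1})\cap\calP\bigr) \le (n+1)d_{k-1}D$, and that doubling the center-to-vertex radius from Lemma~\ref{Lemma: SLMO_P} by the triangle inequality is off by a factor of $4$ relative to the budget $J = 4\rho^2(n+1)^2\eta^2 L/\mu$.

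But that diameter bound is precisely where the proposal stops, and it is a genuine gap: it is the one ingredient that is \emph{not} a transcription of the simplex argument (there is no analog of the exact formula $\sqrt{2}nd$ available by symmetry), and the paper devotes a standalone result to it, Lemma~\ref{Lemma: Simplex_ball_P}. The idea that closes it is structural rather than metric: every vertex $\bfu$ of the inner feasible set is the unique minimizer of some linear objective over that set, so by the explicit form of the $\SLMO_\calP$ solution (Lemma~\ref{Lemma-SLMOP} / Alg.~\ref{Alg: SLMO_P}) it can be written as
\[
  \bfu = \sum_{j=1}^N\bigl(\lambda_{k-1}(j)-\delta_j\bigr)\bfv_j + \delta\,\bfz,
  \qquad \delta_j := \min\{\lambda_{k-1}(j),\, d_{k-1}\},\quad \delta := \sum_{j=1}^N \delta_j,
\]
where only $\bfz \in \calP$ depends on the objective; the ``fixed part'' and the transferred mass $\delta$ are the same for every vertex. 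Hence any two vertices differ by $\delta(\bfz_1-\bfz_2)$, giving diameter at most $\delta D \le |\mathcal{I}_+(\bflambda_{k-1})|\,d_{k-1}D \le (n+1)d_{k-1}D$, the last inequality using the Carath\'eodory representation assumption of Remark~\ref{Remark-SLMO-P}. This shared-fixed-part observation is exactly what recovers the factor of $2$ you flagged as the obstacle; without it (or an equivalent substitute) the induction does not close with the constant baked into $J$, so the proposal is incomplete at its self-declared delicate point.
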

% Notice that we do not extend the rSFW algorithm to the polytope case in this subsection. This is mainly due to two reasons.
% First, since there may exist multiple representation $\lambda_x$ for a given point $\bfx\in\calP$, we cannot naturally extend Lemma \ref{Lemma: Simplex ball}(2) to polytope case. This makes it impossible to process Line 15 of Alg. \ref{Alg: rSFW} for $\calP$.
% Second, it is challenging to provide a good upper bound for $\|\bfp_{j-1}-\bfy_j\|$. A direct extension of Lemma \ref{Lemma: Simplex ball}(4) results in an $O(NDd)$ bound for the diameter of $S_{\calP}(\bfx,d)$, rather than the expected $O(nDd)$. Furthermore, applying the results from Lemma \ref{Lemma: SLMO_P} only yields an estimate for $\|\bfp_{j-1}-\bfx_{k-1}\|$.

\begin{remark}
(Adaptive Lower Bound Update)
    We estimate the lower bound of $f^*$ by $f(\bfx_{k-1})+\langle\nabla f(\bfx_{k-1}),\bfy_k-\bfx_{k-1}\rangle$ for the Simplex Frank-Wolfe method and its refined version.
    In fact, when the objective function exhibits specific structural properties, we  can derive an additional lower $B_k^o$ and update the best bound $B_k$ as $B_k\gets \max\{B_{k-1}, B_k^w, B_k^o \}$.
    For instance, when the objective function has a minmax structure, we can construct  a minmax lower bound $B_k^o$ for $f^*$, see \cite{freund2016new} for detailed analysis.
    Moreover, in certain application scenarios, there may be exact information about the optimal value $f^*$, such as in linear regression or machine learning tasks, where it is known a priori that the optimal value of the loss function is $0$. In such cases, it is straightforward to set $B_k^o \gets f^*$.
\end{remark}

\begin{remark}
(Robustness to Parameter Estimation)
Our algorithms rely on parameters $L, \mu, \eta, D$. In practice, using overestimates $L', \eta', D'$ and an underestimate $\mu'$ such that $\frac{L'\eta'D'\mu}{L\eta D\mu'} = O(1)$ only increases the bounds by a constant factor.
Moreover, as shown in Supplement~\ref{Section: app_quantity}, both $\eta$ and $D$ can be efficiently estimated for common $\calP$.
For $L$ and $\mu$, one can use the backtracking strategy from \cite{pedregosa2020linearly} to estimate their local values and compute adaptive short step sizes; see Subsection~\ref{Subsection-rSFW-A} for details.
\end{remark}

%%%%%%%%%%%%%%%%%%%%%%%%%%%%%%%%%%%%%%%%%%%%%%%%%%%%%%%
\section{Numerical Experiments} \label{Section-Numerical}

In this section, we present numerical experiments to evaluate the efficiency, convergence, and adaptability of the proposed methods. All tests were performed using MATLAB R2022b on a Windows laptop equipped with a 14-core Intel(R) Core(TM) 2.30GHz CPU and 16GB of RAM.

We try to furnish four tasks. 
(T1) We first assess the computational efficiency of SLMO and SLMO-2 across four representative polytopes,
consolidating their role of the workhorse in our SFW methods.
(T2) We illustrate the linear convergence behavior of SFW and rSFW using two numerical experiments.
(T3) We show that our methods can be enhanced with a backtracking strategy to eliminate the need for predefined values of the parameters $L$ and $\mu$. 
(T4) We demonstrate how integrating the away-step variants of the Frank-Wolfe method (AFW and PFW) into the rSFW framework significantly enhances its performance, outperforming the original AFW and PFW methods.
Those four tasks are addressed in four subsections.

\subsection{Efficiency of SLMO and SLMO-2}
In this subsection, we evaluate the performance of the proposed SLMO and SLMO-2 through comparative experiments on four common polytopes $\mathcal{P}$: (a) Unit simplex; (b) Hypercube; (c) $\ell_1$-ball; and (d) Flow polytope, derived from the video co-localization problem in \cite{joulin2014colocalization}.

\begin{table}[htbp]
\footnotesize
\caption{Description of projection and five LMO variants used in the numerical comparison. These six methods shares the same randomly generated parameters: $\bfc\sim \mathcal{N}(\bfzero,I_n),\bfx\in\calP$ and $d\in \mathcal{U}_{[0,1]}$.}\label{table: LMOs}
\begin{center}
    \begin{tabular}{p{0.13\linewidth}  p{0.30\linewidth} p{0.47\linewidth}} \hline
    Algorithm & Formulation & Description \\ \midrule[1.3pt]
    Projection & $\argmin_{\bfy\in\calP}\|\bfy-\bfz\|^2$ & \hangindent=1em \hangafter=1 The projection onto the polytope $\calP$, and $\bfz\sim \mathcal{N}(\bfzero,I_n)$ is a randomly generated point. We implement projections onto the Simplex and $\ell_1$-ball using the method from \cite[Fig.~2]{condat2016fast}, while the projection onto the hypercube is straightforward. Although a closed-form solution exists for projection onto the flow polytope \cite[Thm.~20]{vegh2012strongly}, its computational complexity of $O(m^3n + n^2)$ makes it significantly more expensive than other LMO variants. \\ \\
    LMO & $\argmin_{\bfy\in\calP}\langle \bfy,\bfc\rangle$ & \hangindent=1em \hangafter=1 The standard linear minimization oracle. \\ \\
    $\ell_1$-LMO & $$\argmin_{\bfy\in\{V\bflambda\mid\bflambda\in B_1(\bflambda_x,d)\cap S_N\}}\langle \bfy,\bfc\rangle$$ & \hangindent=1em \hangafter=1 The $\ell_1$-norm constrained LMO, Alg. 3 and Alg. 4 in \cite{garber2016linearly}. Here, $V$ consists of columns of $\bfv\in\calV(\calP)$, and the computation of  $\bflambda_x\in\calM(\bfx)$ is included in the timing. \\ 
    NEP & $$\argmin_{\bfy\in \calV (\calP)}\langle \bfy,\bfc\rangle + \lambda\|\bfy-\bfx\|^2$$ & \hangindent=1em \hangafter=1 Nearest extreme point oracle in \cite{garber2021frank}. Here, $\lambda\sim \mathcal{U}_{[0,10000]}$ is a randomly generated positive number.   \\ 
    SLMO$_{\calP}$ & $$\argmin_{\bfy\in\{V\bflambda\mid\bflambda\in S(\bflambda_x,d)\cap S_N\}}\langle \bfy,\bfc\rangle$$ & \hangindent=1em \hangafter=1 Our proposed Simplex Linear Minimization Oracle (Alg.~\ref{Alg: SLMO} and Alg.~\ref{Alg: SLMO_P}). Here, the computation of $\bflambda_x\in\calM(\bfx)$ is included in the timing.\\ 
    SLMO${}_\calP$-2 & $$\argmin_{\bfy\in\{V\bflambda\mid\bflambda\in S(\widehat{\bflambda}_x,\widehat{d})\}}\langle \bfy,\bfc\rangle$$ & \hangindent=1em \hangafter=1 The latter phase of SLMO, consisting of Lines 4-5 of Alg.~\ref{Alg: SLMO} and Alg.~\ref{Alg: SLMO_P}. Here, $S(\widehat{\bflambda}_x,\widehat{d})=S(\bflambda_x,d)\cap S_N$ is precomputed and not included in the timing.  \\  \hline
    \end{tabular}
\end{center}
\end{table}

\begin{figure}[!ht]
    \centering
    \subfloat[]{
        \includegraphics[width=0.45\textwidth]{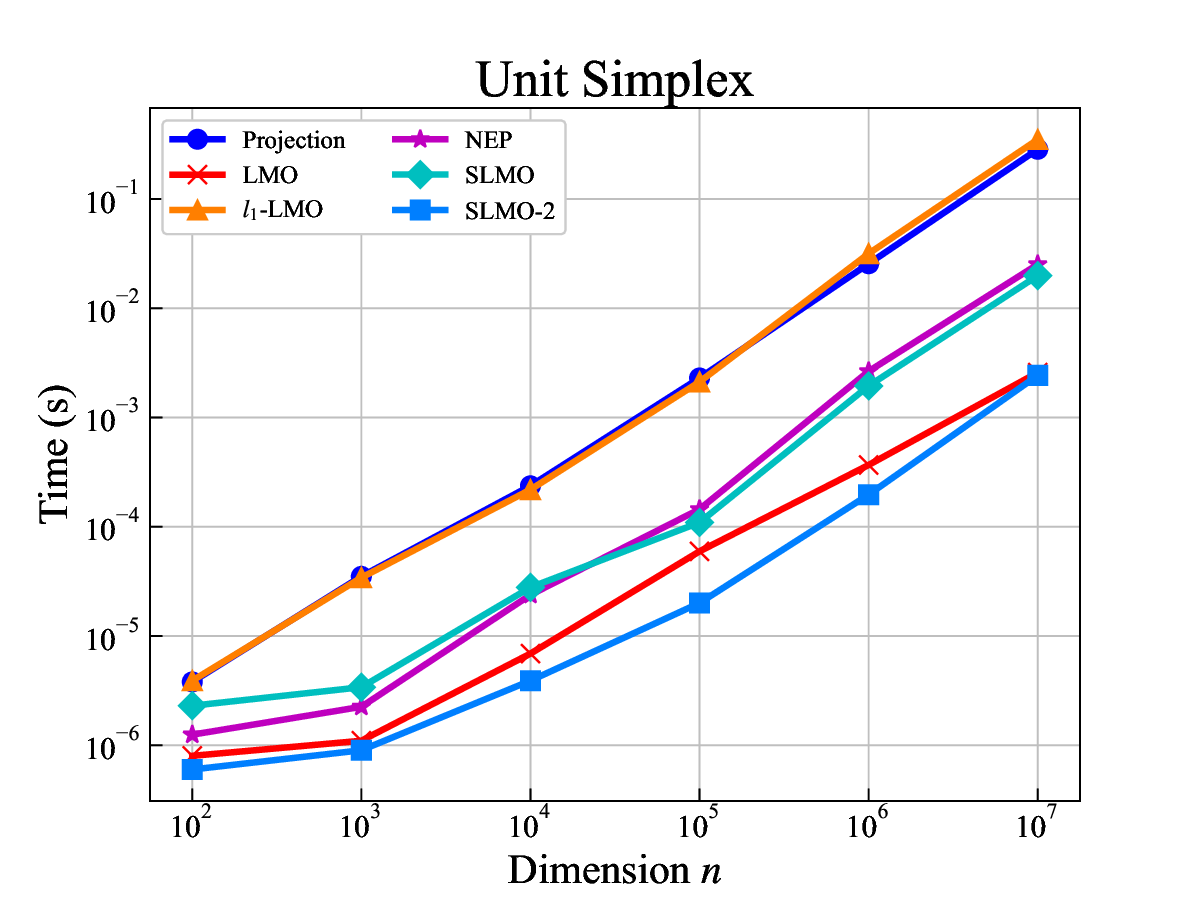}
        \label{Fig: LMO_compare_Simplex}
    }
    ~
    \subfloat[]{
        \includegraphics[width=0.45\textwidth]{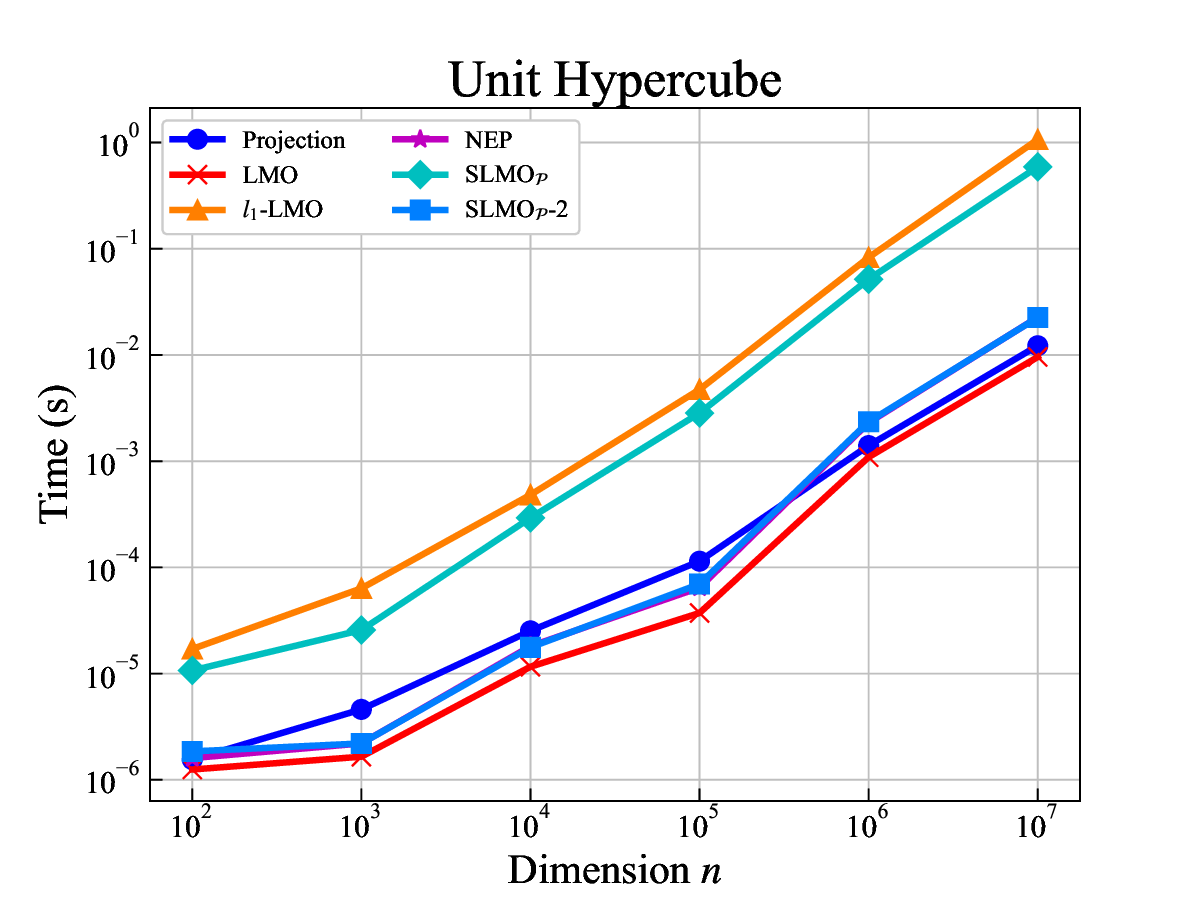}
        \label{Fig: LMO_compare_Hypercube}
    }

    \vspace{0.3cm} %    ƴ ֱ   

    \subfloat[]{
        \includegraphics[width=0.45\textwidth]{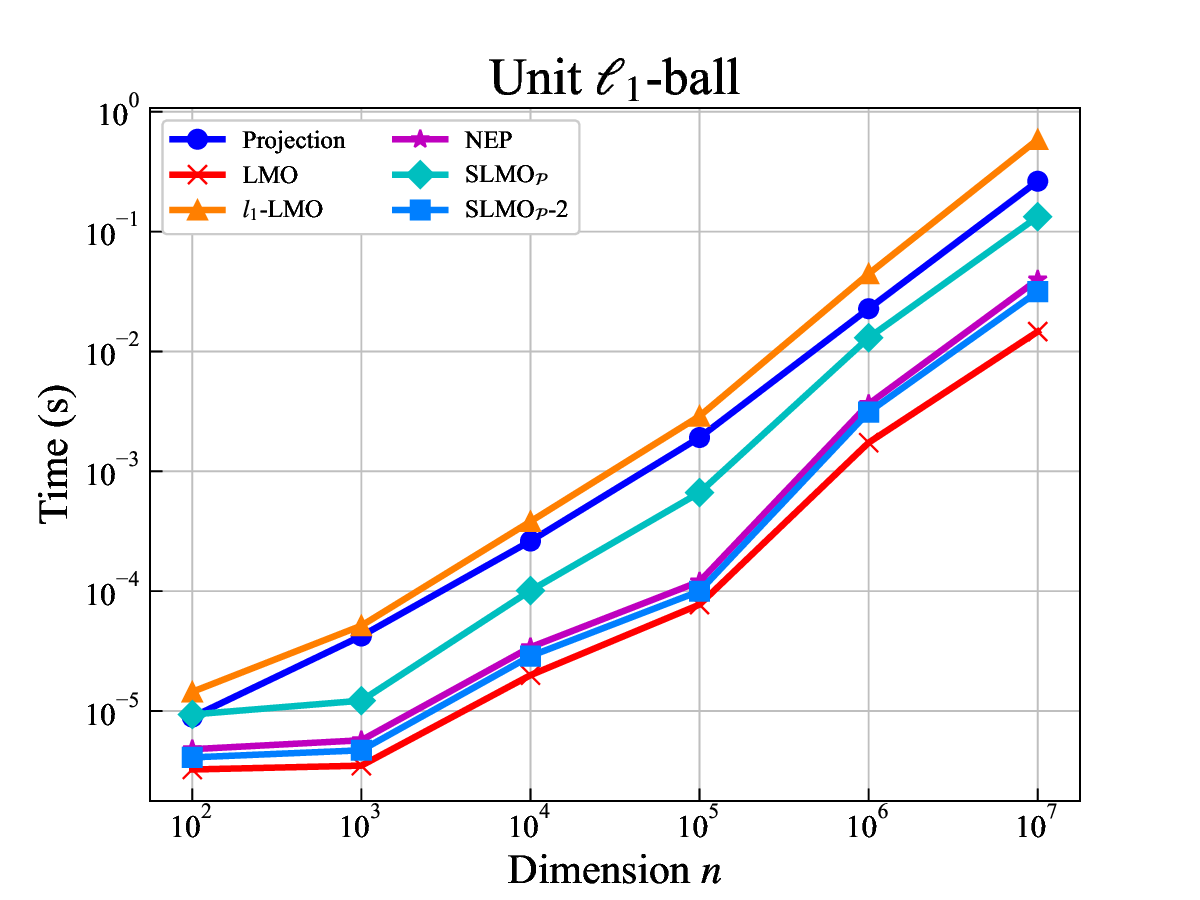}
        \label{Fig: LMO_compare_L1}
    }
    ~
    \subfloat[]{
        \includegraphics[width=0.45\textwidth]{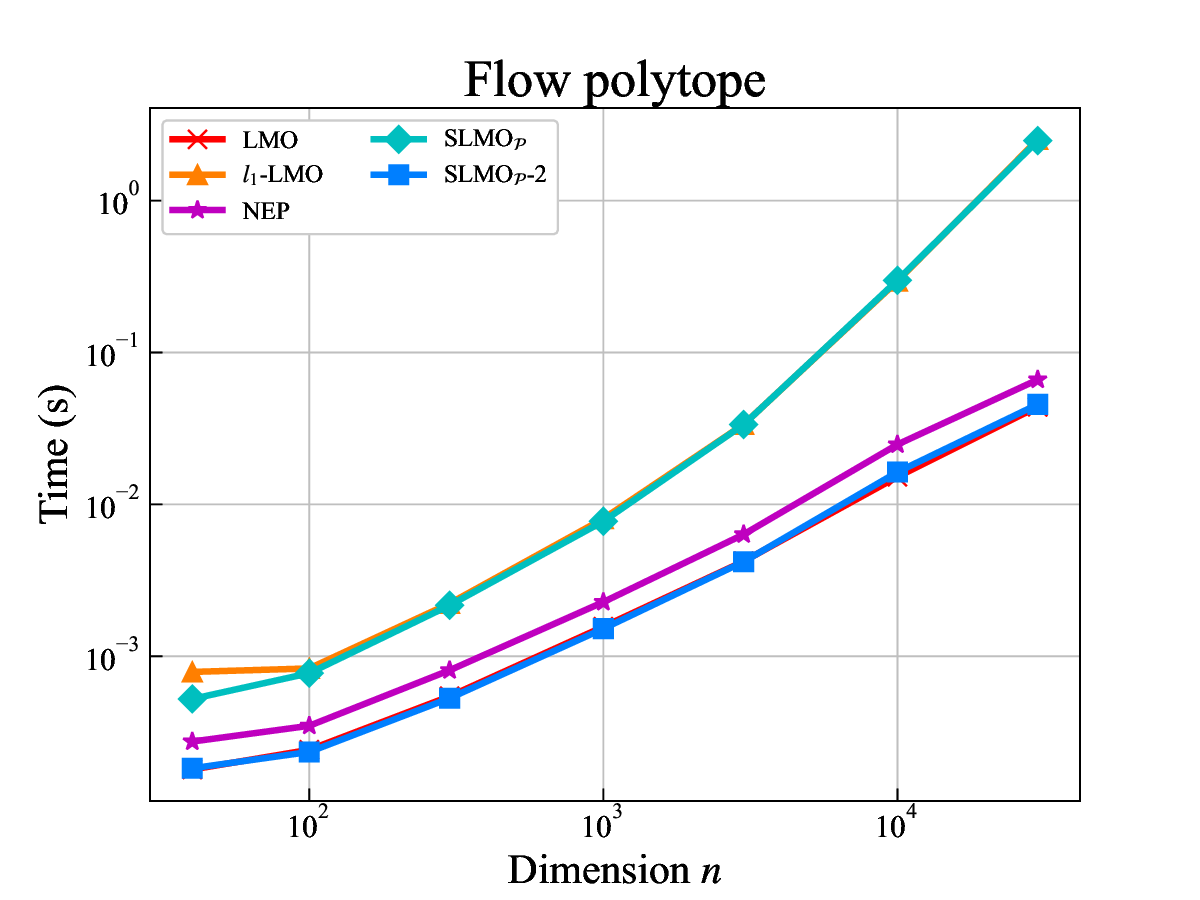}
        \label{Fig: LMO_compare_Flow}
    }
    \caption{Comparison of solving Projection, LMO, $\ell_1$-LMO, NEP, SLMO, and SLMO-2 over the following polytopes: (a) Unit Simplex; (b) Unit Hypercube; (c) Unit $\ell_1$-ball; and (d) Flow polytope. All the results are averaged over 20 i.i.d runs. We omit the projection onto the flow polytope due to its prohibitively high computational cost.}
    \label{Fig: LMO_compare}
\end{figure}

We consider the six different methods, including projection (a key subproblem in projection/proximal based methods) and five variants of LMO, as detailed in Table~\ref{table: LMOs}.
These six methods shares the same randomly generated $\bfc\sim \mathcal{N}(\bfzero,I_n),\bfx\in\calP$ and $d\in \mathcal{U}_{[0,1]}$.
Figure \ref{Fig: LMO_compare} illustrates the relationship between running time and dimensionality for the six methods. % mentioned above, evaluated over four different types of polytopes.
Additionally, Table~\ref{table: percentage-LMO} reports the proportion of time spent on LMO calls within the SLMO-2 algorithm.
We draw the following observations from these results:
\begin{itemize}
    \item \textbf{$\ell_1$-LMO:} Across all four polytopes, the $\ell_1$-LMO method incurs the highest computational overhead  surpassing even that of projection-based methods.
    \item \textbf{SLMO${}_\calP$-2 vs. SLMO${}_\calP$:} The overhead of SLMO-2 is significantly lower than that of SLMO. In most cases, as shown in Table~\ref{table: percentage-LMO}, its overhead closely matches that of the LMO itself. This indicates that our proposed rSFW and rSFW${}_\calP$ achieve iterative complexity comparable to that of the standard Frank-Wolfe algorithm.
    \item \textbf{NEP:} While NEP demonstrates very low runtime overhead, it is important to note that the corresponding Frank-Wolfe variant, NEP-FW, converges only sublinearly  as shown in Subsection~\ref{Subsection: num_sFW}.
    \item \textbf{$\ell_1$-LMO and SLMO${}_\calP$ on the Flow Polytope:} 
    Both methods exhibit rising overhead with increasing dimension, mainly due to the cost of computing the Carathéodory representation $\bflambda_x \in \calM(\bfx)$, which dominates the runtime.
\end{itemize}

\begin{table*}[!ht]
\footnotesize
\caption{Time overhead of LMO calls as a percentage of total computation time when using the SLMO-2 algorithm across four different polytopes.}
    \label{table: percentage-LMO}
\begin{center}
    \begin{tabular}{p{0.165\linewidth} p{0.165\linewidth} p{0.165\linewidth} p{0.165\linewidth} p{0.165\linewidth}} \hline
     & Simplex  & Hypercube  & $\ell_1$-ball  & Flow polytope  \\
     & ($n=10^7$) & ($n=10^7$) & ($n=10^7$) & ($n=3\times 10^4$) \\ \midrule[1.3pt]
    $\frac{\mbox{Time(LMO)}}{\mbox{Time(SLMO-2)}}$ & $98.8\%$ & $46.0\%$ & $52.6\%$ & $99.7\%$
    \\  \hline
    \end{tabular}

\end{center}
%\vskip -0.2in
\end{table*}

\subsection{Linear Convergence of SFW and rSFW}\label{Subsection: num_sFW}
\begin{table*}[t]
{\footnotesize
\caption{Description of Frank-Wolfe variants used in the numerical comparison. }
    \label{table:algorithms}
\begin{center}
    \begin{tabular}{p{0.29\linewidth}  p{0.63\linewidth}} \hline
    Algorithm &Description \\ \midrule[1.3pt]
    FW (simple/Ada) & \hangindent=1em \hangafter=1 Frank-Wolfe  with simple step size $\delta_k = 2/(k+1)$. The `Ada' variant employs a backtracking step \eqref{Eq-backtracking} prior to updating the iterate, in order to estimate the local parameters $L$ and $\mu$, thereby enabling an adaptive short step size.
    We set $\tau_1 = 2$ and $\tau_2 = 0.9$ in Alg.~\ref{Alg: backtracking}, and apply the same configuration to the subsequent `Ada' variants.  \\
    SFW/SFW${}_\calP$ (line-search) &  \hangindent=1em \hangafter=1  Simplex Frank-Wolfe with exact line-search (Alg.~\ref{Alg: SFW} and Alg.~\ref{Alg: SFW_P}). For the $\ell_1$-constrained least squares problem, we set $\mu=2\lambda_{min}(A'A)$, $D=2$ and $\eta=\sqrt{n}$. Although Supplement~\ref{Section: app_quantity} estimates $\eta\leq n$ for $\ell_1$-ball, this setting does not hinder the algorithm  s linear convergence and demonstrates strong practical performance. For the video co-localization task, we set $\mu=\lambda_{min}(A)$ and $\eta = D=\sqrt{66}$; see Supplement~\ref{Section: app_quantity} for details. For the Simplex-constrained least squared problem, we set $\mu=2\lambda_{min}(A'A)$. \\
    NEP-FW (simple) & \hangindent=1em \hangafter=1 Frank-Wolfe with Nearest Extreme Point Oracle, with theoretical step size $2/(k+1)$ \cite[Alg.~1]{garber2021frank}. We omitted Line 5, as it showed no noticeable effect on performance. \\
    rSFW/rSFW${}_\calP$ (simple) & \hangindent=1em \hangafter=1 Refined Simplex Frank-Wolfe with simple step size $\delta_j = 2/(j+1)$ (Alg.~\ref{Alg: rSFW} and Alg.~\ref{Alg: rSFW-P}). We utilize the warm-start strategy with $\rho'=2$ as mentioned in Remark.~\ref{Remark-warm-start}. The parameters $\mu,D$ and $\eta$ are set the same as in SFW/SFW${}_\calP$. Additionally, we set $\rho=1.01, L=2\lambda_{max}(A'A)$ for $\ell_1$/Simplex-constrained least squares problems, and $\rho=1.01, L=\lambda_{max}(A)$ for video co-localization problem. \\
    PFW (line-search) & \hangindent=1em \hangafter=1 Pairwise Frank-Wolfe with exact line-search \cite[Alg.~2]{lacoste2015global}.  \\
    AFW (line-search) & \hangindent=1em \hangafter=1 Away-steps Frank-Wolfe with exact line-search \cite[Alg.~1]{lacoste2015global}. \\
    rSFW-P (line-search) & \hangindent=1em \hangafter=1 The Refined Simplex Frank-Wolfe framework enhanced with Pairwise technique. Specifically, we incorporate \eqref{Eq: PFW-correction} after Line 6 in Alg.~\ref{Alg: rSFW} and replace Line 12 with \eqref{Eq: rSFW_P_update_2}.\\
    rSFW-A (line-search) & \hangindent=1em \hangafter=1 The Refined Simplex Frank-Wolfe framework enhanced with Away-steps technique. Specifically, we incorporate \eqref{Eq: AFW-correction} after Line 6 in Alg.~\ref{Alg: rSFW} and replace Line 12 with \eqref{Eq: rSFW_P_update_2}.\\ \hline
    \end{tabular}

\end{center}
}
%\vskip -0.2in
\end{table*}

We demonstrate the linear convergence of our proposed methods---SFW and rSFW through two numerical experiments. %shown in Figure \ref{Fig: FW_compare}.
These methods are compared against the standard Frank-Wolfe (FW) algorithm, its variant NEP-FW\footnote{As the code for NEP-FW is not publicly available, we implemented it ourselves.}, and two well-known variants: Away-step FW\footnote{The implementations of AFW and PFW are available at \url{https://github.com/Simon-Lacoste-Julien/linearFW}.} (AFW) and Pairwise FW (PFW), all summarized in Table~\ref{table:algorithms}.

To evaluate algorithmic performance, we adopt the Frank-Wolfe gap defined by
$
\langle \nabla f(\bfx_k), \bfx_k-\bfy_{k+1}\rangle,
$
where $\bfx_k$ is the $k$-th iterate and $\bfy_k$ denotes the solution returned by the respective LMO variant at that iteration.
This FW gap provides a valid upper bound on the primal gap, i.e., $f(\bfx_k) - f^* \leq \langle \nabla f(\bfx_k), \bfx_k - \bfy_{k+1} \rangle$, and can thus be used as a practical stopping criterion\footnote{For NEP-FW, however, this inequality does not hold in general for $\bfy_{k+1} = \text{NEP}(\bfx_k) = \text{LMO}(\nabla f(\bfx_k) - \lambda_k \bfx_k, \calP)$.
Therefore, to ensure a consistent and fair comparison, we use the standard FW gap to evaluate NEP-FW as well.}.
\begin{figure}[!ht]
    \centering
    \subfloat[]{
        \includegraphics[width=0.45\textwidth]{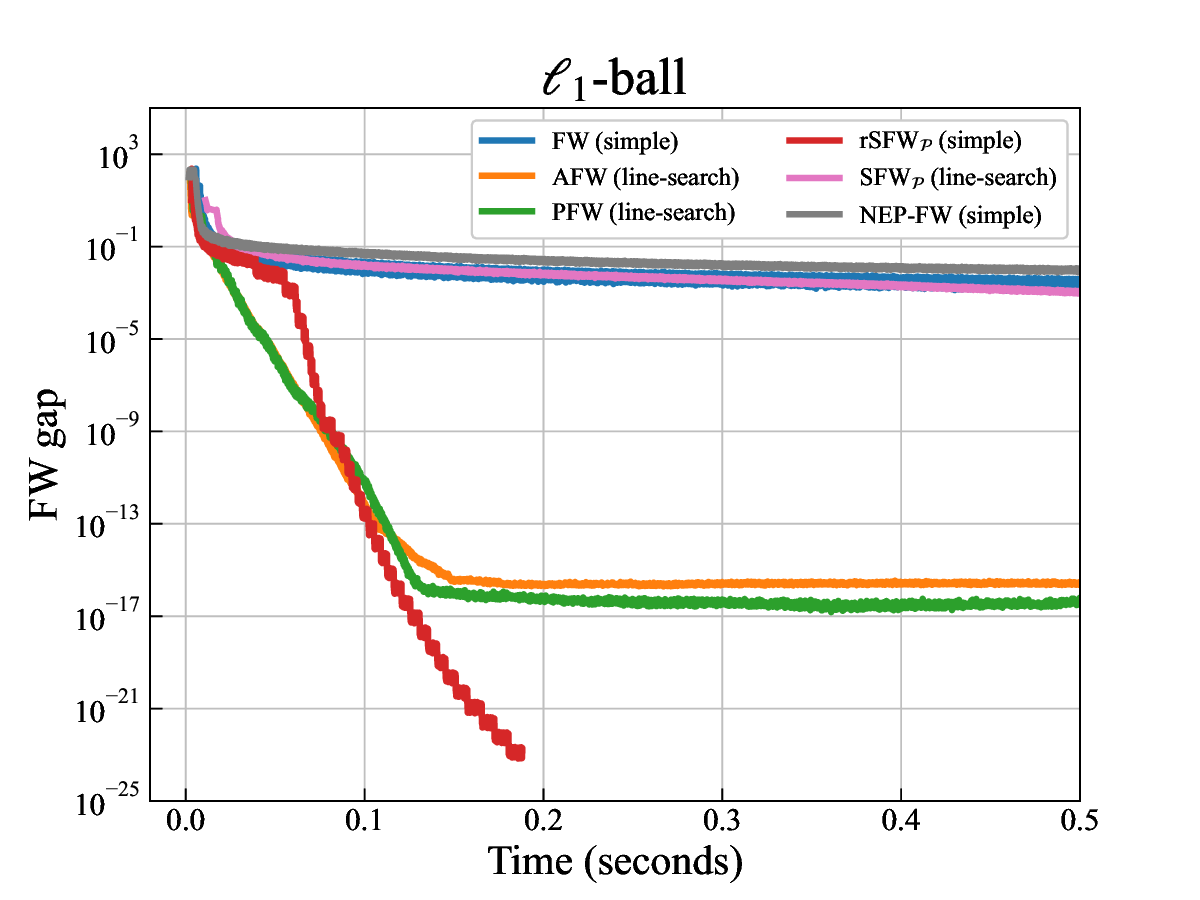}

        \label{Fig: time2gap_L1}
    }
    ~
    \subfloat[]{
        \includegraphics[width=0.45\textwidth]{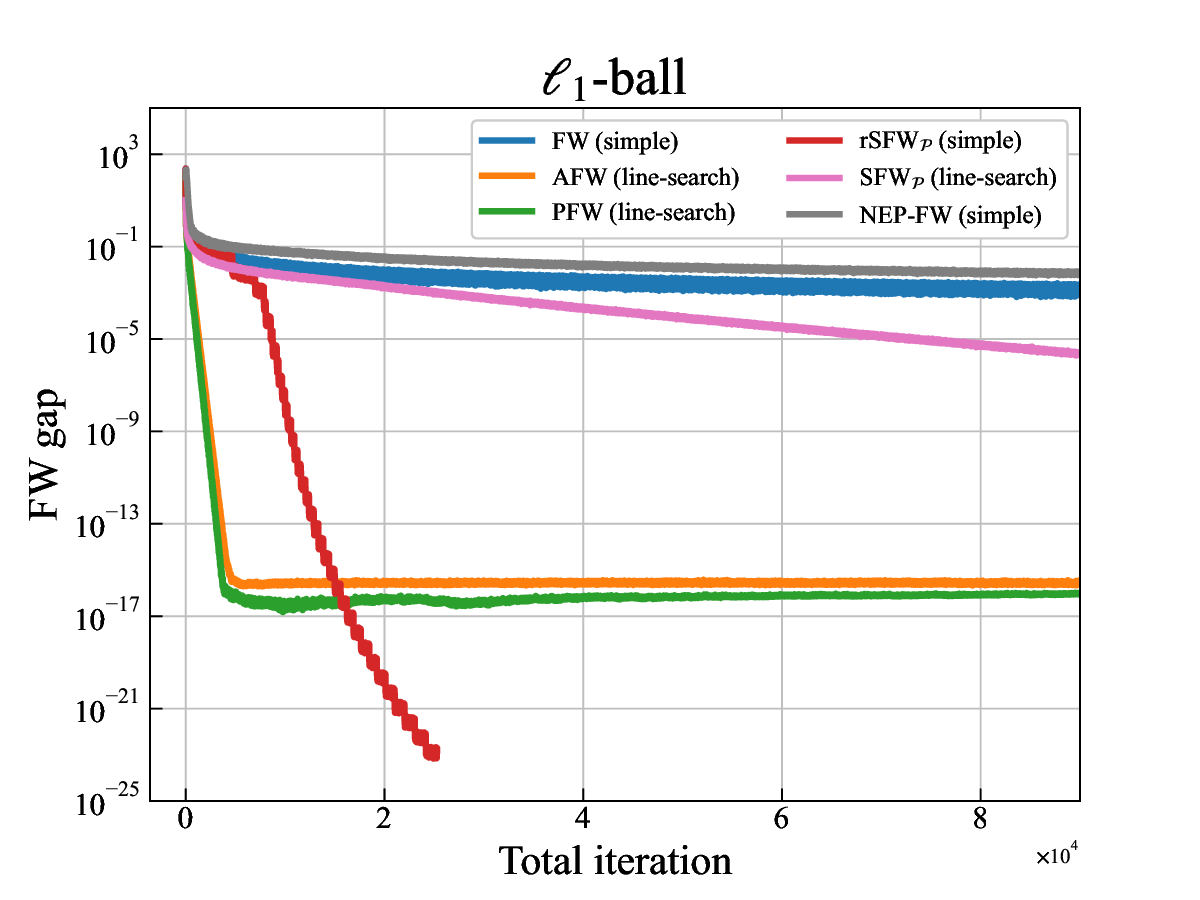}

        \label{Fig: iter2gap_L1}
    }

    \vspace{0.2ex} %    ƴ ֱ   
    %\caption*{\textbf{(a)} Least squares regression}

    \subfloat[]{
        \includegraphics[width=0.45\textwidth]{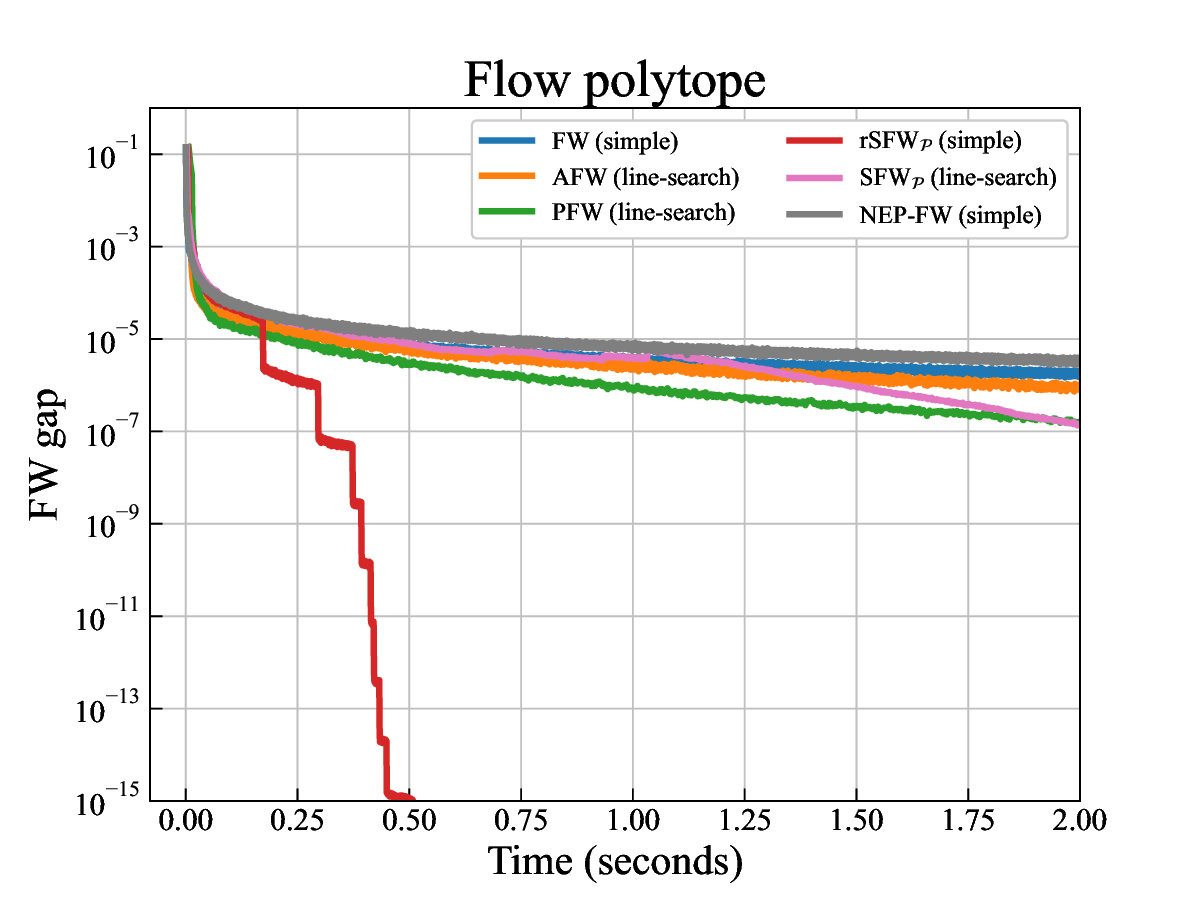}

        \label{Fig: time2f_Flow}
    }
    ~
    \subfloat[]{
        \includegraphics[width=0.45\textwidth]{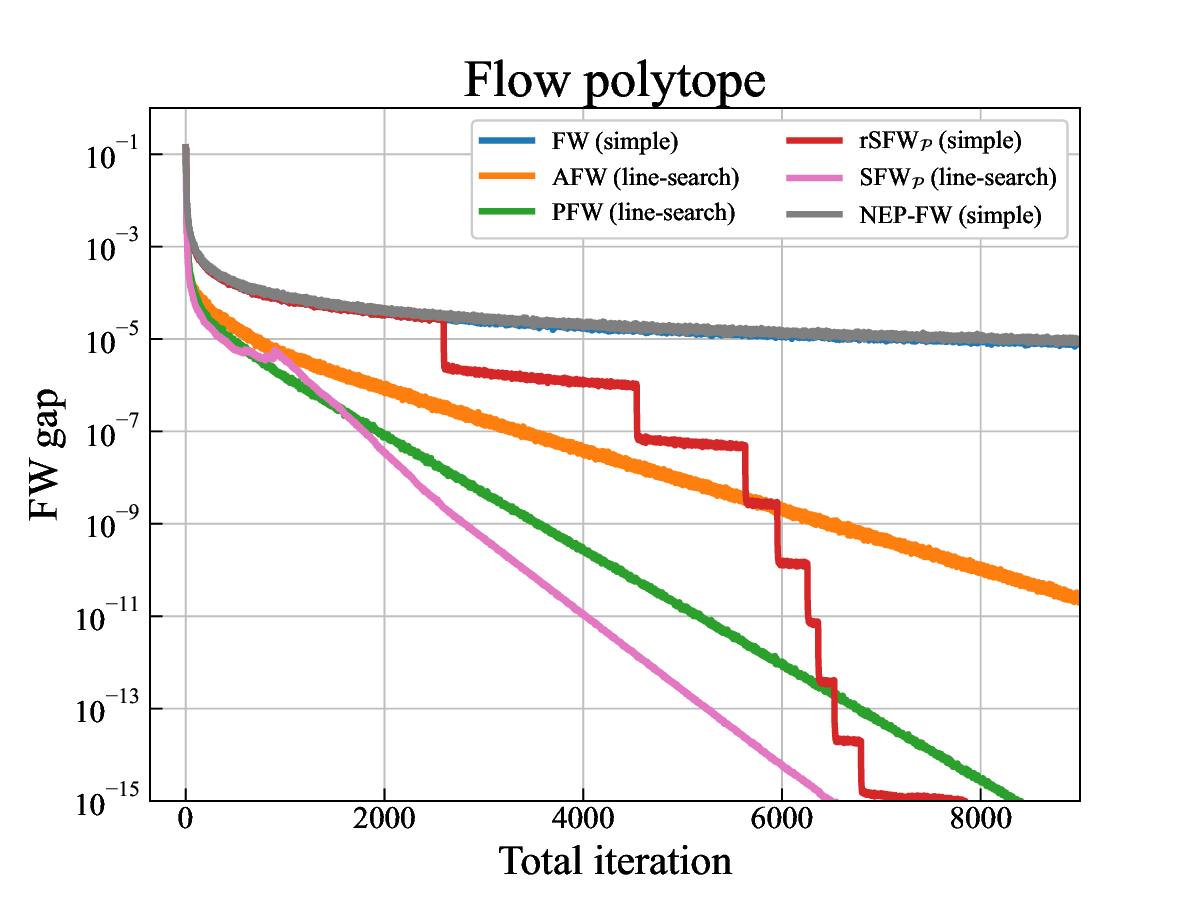}

        \label{Fig: iter2f_Flow}
    }

    \vspace{0.2ex}
    %\caption*{\textbf{(b)} Video co-localization}

    \caption{FW gap vs time/iterations.}
    \label{Fig: sFW_linear_demon}
\end{figure}

The first experiment involves an $\ell_1$-regularized least squares regression, that is $\min_{\|\bfx\|_1\leq 1}\lVert A\bfx-\bfb\rVert_2^2$, where $A\in\mathbb{R}^{m\times n}$ with $m=400,n=100$, and the entries of $A$ are drawn from a standard Gaussian distribution.
We set $\bfb=A\bfx^*$, where $\bfx^*$ is constructed by first generating a random vector with sparsity parameter $s=0.7$, followed by normalization to lie on the boundary of the $\ell_1$-ball.
Thus, the optimal value of this problem is 0. We use the same initial point $\bfx_0=\bfzero_n$ for all methods.

The second experiment involves a convex quadratic problem over the flow polytope, derived from the \textit{video co-localization} task introduced by \cite{joulin2014colocalization}.
The problem is formulated as $\min_{\bfx\in\mathcal{F}_{s,t}} \frac{1}{2}\bfx'A\bfx+\bfb' x$, where $A\in\mathbb{R}^{n\times n}$ is a positive definite matrix,  $\bfb\in\mathbb{R}^n$, and $\mathcal{F}_{s,t}$ represents the s-t flow polytope.
We used the same dataset and initial point as in \cite{lacoste2015global,garber2021frank}.
The problem has a dimension of $n = 660$.

The results are presented in Figure~\ref{Fig: sFW_linear_demon}. %We make several comments from these results.
We make some comments below.
\begin{itemize}
    \item \textbf{Linear convergence of SFW${}_\calP$ and rSFW${}_\calP$:} Both SFW${}\calP$ and rSFW${}\calP$ show linear convergence, confirming our theoretical guarantees.
    \item \textbf{Superior efficiency of rSFW${}_\calP$:} In both experiments, our proposed rSFW method significantly outperforms all other algorithms  --including the well-established AFW and PFW--  in terms of running time.
    \item \textbf{Limitations of NEP-FW:} Although NEP performs well in iteration complexity, its NEP-FW variant converges sublinearly and is slightly slower than standard FW.
    \item \textbf{Time inefficiency in video co-localization:} In the video co-localization task, while SFW${}\calP$, AFW, and PFW converge quickly by iteration count, their runtime is slower due to overhead—--Carathéodory computation for SFW${}\calP$, and growing active sets for AFW and PFW.
\end{itemize}

\subsection{SFW/rSFW with Backtracking}
We further show that our methods can be enhanced with the backtracking technique proposed by \cite{pedregosa2020linearly}, thereby eliminating the need to manually specify the parameters $L$ and $\mu$.
While \cite[Alg.~2]{pedregosa2020linearly} does not specify how to estimate the strong convexity constant $\mu$, we outline our approach to estimating both $L$ and $\mu$ as well as determining an adaptive step size; see \ref{app-backtracking} for the detailed algorithm.
As an example, consider the $k$-th iteration of SFW${}_\calP$. Before updating $\bfx_k$, we perform the following backtracking step:
\begin{equation}\label{Eq-backtracking}
    \delta_k,\ L_k,\ \mu_k\gets \text{Backtracking-Routine}(\bfx_{k-1},\bfy_k-\bfx_{k-1},L_{k-1},\mu_{k-1},1).
\end{equation}

We focus on the $\ell_1$-constrained logistic regression problem with the form:
\[
    \min_{\lVert\bfx\rVert_1\leq \beta} \frac{1}{m}\sum_{i=1}^m \text{ln}(1+\text{exp}(-b_i\langle \mathbf{a}_i,\bfx \rangle))+\frac{\lambda}{2}\lVert\bfx\rVert^2,
\]
where $A=[\mathbf{a}_1,\dots,\mathbf{a}_m]\in\mathbb{R}^{m\times n}$ and $\bfb\in\mathbb{R}^m$. We use the dataset Madelon \cite{guyon2008feature}, which has $m=4400,n=500$, and fully-density (i.e. $\text{density}=1$). We set $\beta=1$ and $\lambda=1/n$.
We compare our methods against AFW, PFW, and the standard FW, all of which are equipped with the backtracking technique. The results are presented in Figure~\ref{Fig: FW_back}. It can be observed that our two methods achieve the best performance in terms of running time. Although AFW and PFW perform well in terms of iteration count, their overall efficiency is hindered by the increasing cost of maintaining a growing active set and computing the away direction.

\begin{figure}[ht]
    \centering
    \subfloat{
        \includegraphics[width=0.45\textwidth]{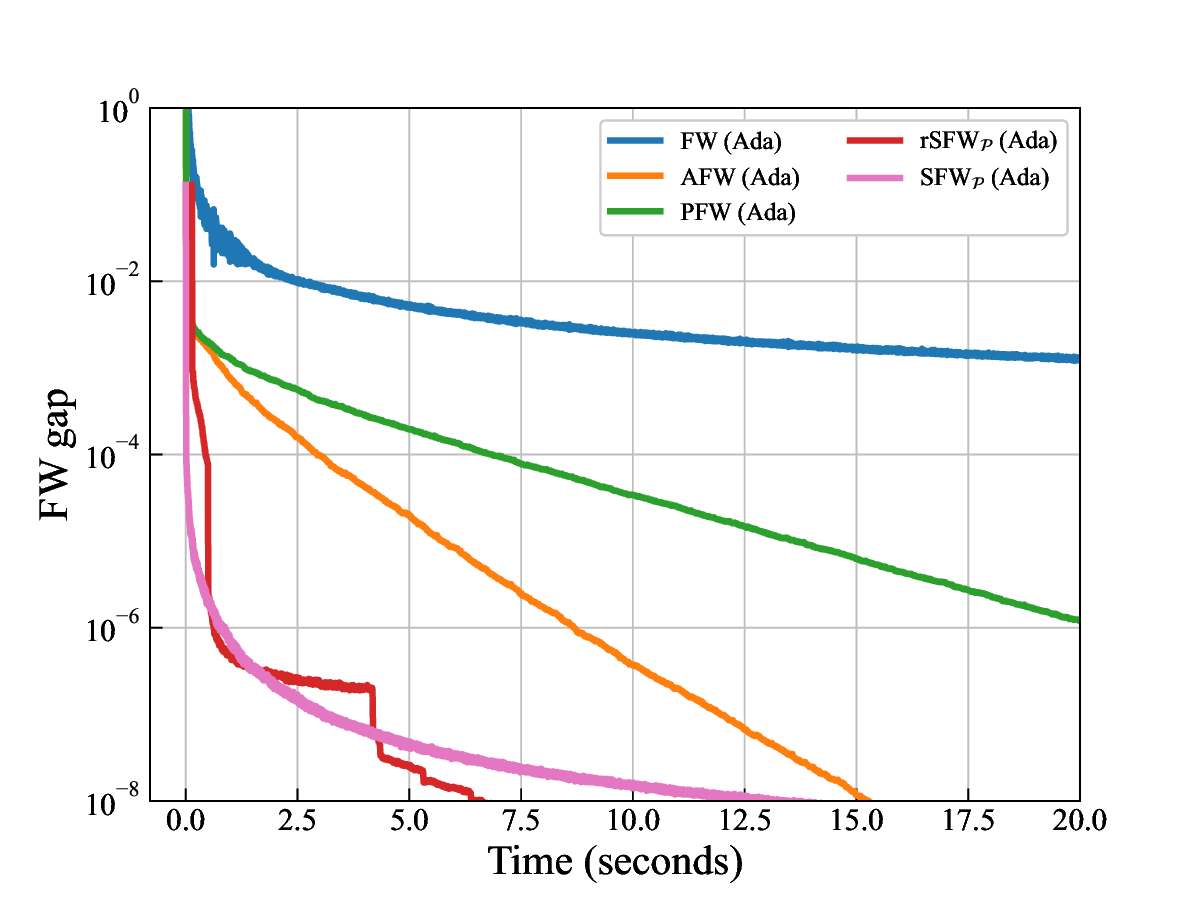}
        \label{Fig: time2gap_back}
    }
    ~
    \subfloat{
        \includegraphics[width=0.45\textwidth]{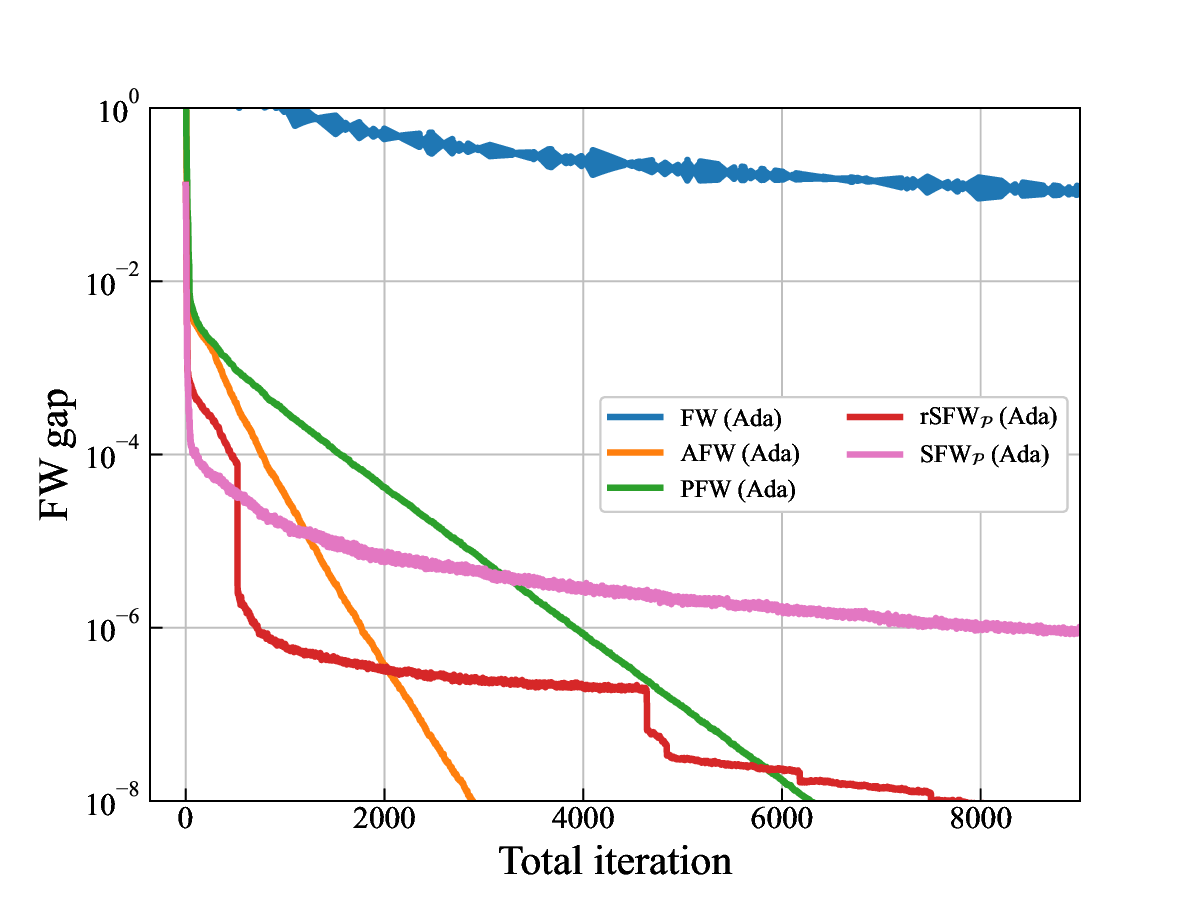}
        \label{Fig: iter2gap_back}
    }
    \caption{FW gap vs time/iterations on the $\ell_1$-constrained logistic regression problem.}\label{Fig: FW_back}
\end{figure}

\subsection{rSFW Framework Combined with the Away Step Technique}\label{Subsection-rSFW-A}

In this subsection, we demonstrate that the well-known linearly converging variants of the standard Frank-Wolfe method  AFW and PFW \cite{lacoste2015global}  can be seamlessly integrated into the inner loop of the rSFW framework.
This straightforward combination leads to a significant performance improvement over the original AFW and PFW methods.

We focus on the simplex-regularized problem $\min_{\bfx\in S_n}\lVert A\bfx-\bfb\rVert_2^2$, where $A\in\mathbb{R}^{m\times n},m=800,n=200$, with standard Gaussian entries.
We set $\bfb=A\bfx^*$, where $\bfx^*$ is constructed by first generating a random nonnegative vector with sparsity parameter $d=0.6$ and then normalized it so that its components sum to 1.
Thus 0 is the optimal value of this problem. We use the same initial point $\bfx_0 = \bfone_n/n$ for all methods.

In comparison to the experiments in the previous subsection, we introduce four additional algorithms: AFW and PFW, along with their respective versions integrated into the rSFW framework, denoted as rSFW-A and rSFW-P.
Details of these methods are provided in Table~\ref{table:algorithms}.

We briefly explain here how the direction-correction $\bfg$ is computed within the general framework \eqref{Modified-FW} for both the rSFW-A and rSFW-P algorithms.
Based on Alg.~\ref{Alg: rSFW}, during the $k$-th outer loop and the $j$-th inner loop, let $S^{(k,j)}\subset \mathcal{V}(S(\widehat{\bfx}_{k-1},\widehat{d}_{k-1}))$ denote the active set corresponding to the point $\bfp_{j-1}$. Thus, $\bfp_{j-1}$ can be represented as $\bfp_{j-1}=\sum_{\bfv\in S^{(k,j)}}\alpha_{\bfv}\bfv$ where $\alpha_{\bfv}>0$.
Let $\bfv_j={\arg\max}_{\bfv\in S^{(k,j)}}\langle \nabla f(\bfp_{j-1}),\bfv \rangle$.
For the rSFW-A method, the direction-correction is computed as:
\begin{equation}\label{Eq: AFW-correction}
\bfg_j =
\begin{cases}
\frac{1}{1-\alpha_{\bfv_j}}\bfp_{j-1}-\bfy-\frac{\alpha_{\bfv_j}}{1-\alpha_{\bfv_j}}\bfv_j & \text{if } \Delta_j<0, \\
\bfzero & \text{if } \Delta_j\geq 0,
\end{cases}
\end{equation}
where $\Delta_j:=\langle -\nabla f(\bfg_{j-1}),\bfy_j-\bfp_{j-1}\rangle-\langle -\nabla f(\bfg_{j-1}),\bfp_{j-1}-\bfv_j\rangle.$

For the rSFW-P method, the direction-correction is computed as:
\begin{equation}\label{Eq: PFW-correction}
    \bfg_j =\bfp_{j-1}-(1-\alpha_{\bfv_j})\bfy_j-\alpha_{\bfv_j}\bfv_j.
\end{equation}

Finally, we update the point $\bfp_j$ using the iteration:
\begin{equation}\label{Eq: rSFW_P_update_2}
    \bfp_j\gets(1-\delta_j)\bfp_{j-1}+\delta_j(\bfy_j+\bfg_j),
\end{equation}
which replaces the original iteration.

\begin{figure}[ht]
    \centering
    \subfloat{
        \includegraphics[width=0.45\textwidth]{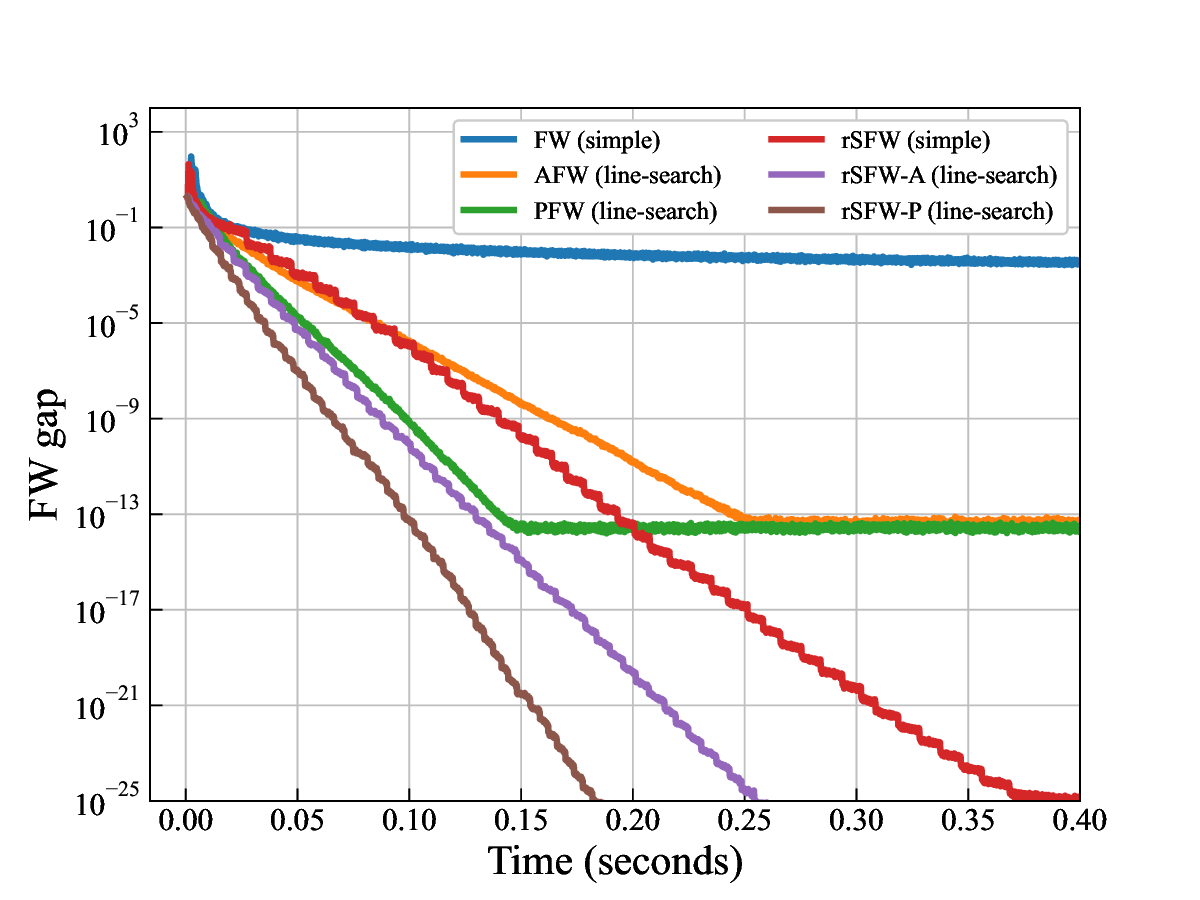}
        \label{Fig: time2gap_Simplex}
    }
    ~
    \subfloat{
        \includegraphics[width=0.45\textwidth]{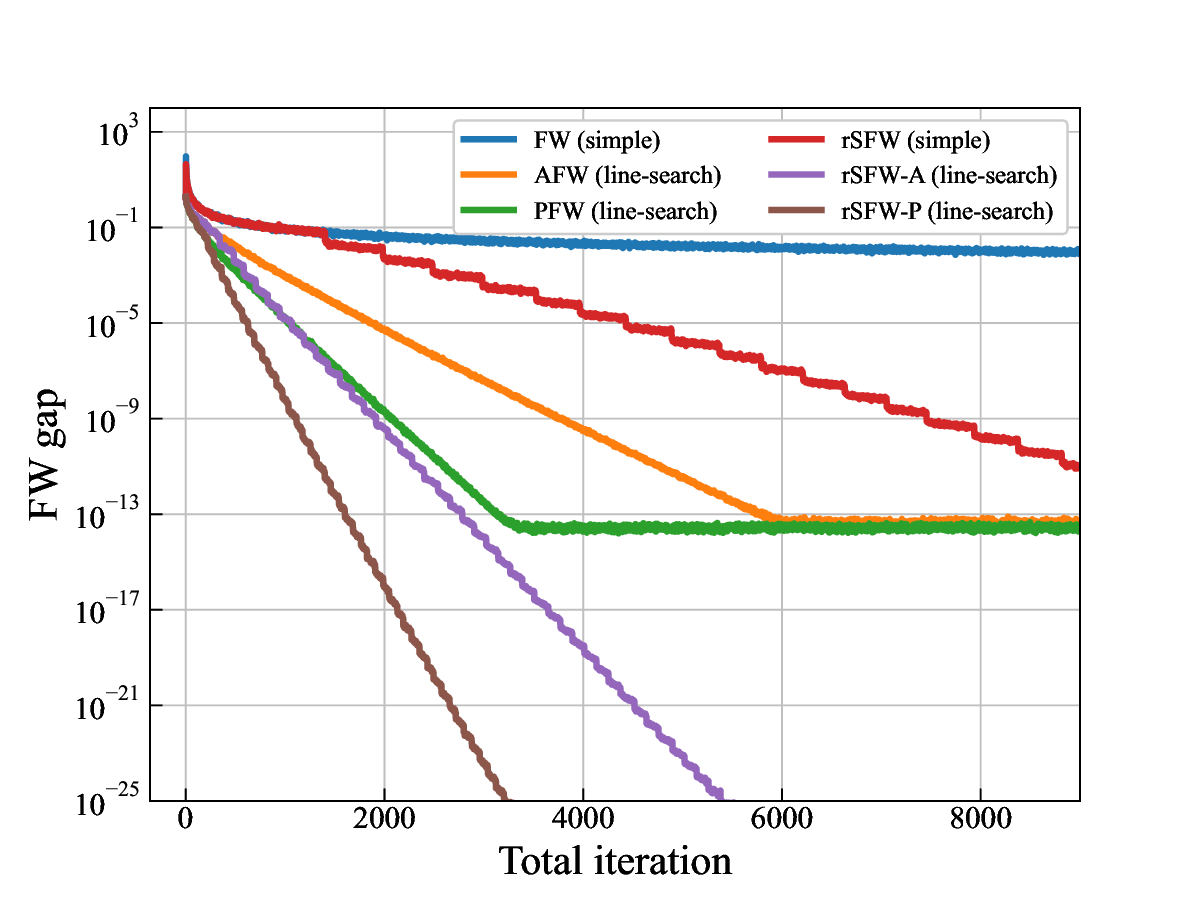}
        \label{Fig: iter2gap_Simplex}
    }
    \caption{FW gap vs time/iterations on the Simplex-constrained least squared  problem with $(m,n)=(800,200)$.}\label{Fig: FW_compare}
\end{figure}

The results are given in Figure \ref{Fig: FW_compare}. rSFW-P demonstrates superior performance compared to all other algorithms, excelling in both the number of iterations and running time, achieving nearly twice the efficiency of PFW.
Additionally, both framework-based acceleration algorithms exhibit substantial performance improvements over the standalone rSFW framework.

\section{Conclusion} \label{Section-Conclusion}

In this paper, we introduced a novel oracle: SLMO, %
% the Simplex Linear Minimization Oracle (SLMO), 
which leverages the advantageous geometric properties of the unit simplex.
This design enables SLMO to be implemented with the same computational complexity as the standard linear optimization oracle, preserving the efficiency of the Frank-Wolfe framework.
Building on this oracle, we proposed two new variants of the classical Frank-Wolfe algorithm: the Simplex Frank-Wolfe (SFW) and refined Simplex Frank-Wolfe (rSFW) algorithms. Both methods achieve linear convergence for smooth and strongly convex optimization problems over polytopes.
The linear convergence rates of these methods depend only on the condition number of the objective function, the polytope  s quantity, and the problem  s dimension, demonstrating their scalability and robustness in various settings.

The purpose of this paper is to develop the basic framework for the new SFW methods and demonstrate
that they are highly competitive. 
We do so with the simplest setting of $f$ being strongly convex and smooth.
We 
made no attempt to weaken such assumption except pointing out that the obtained results should also hold 
under the quadratic growth condition.
An immediate question would be to extend the methods to convex or even nonconvex setting.
Furthermore, LLOO proposed in \cite{garber2016linearly} was an elegant framework and it was largely omitted
from recent surveys on FW methods. 
To our best knowledge, SFW was the first LLOO instance that was extensively tested and compared with other 
popular FW methods. 
An intriguing question is whether there exist
alternative LLOO approaches that simultaneously satisfy the following criteria: (i) adhering to the LLOO framework, (ii) enabling fast and accurate computation, and (iii) incurring significantly lower overhead compared to projection-based methods?
We leave those topics to our future research.

%%%%%%%%%%%%%%%%%%%%%%%%%%%%%%%%%%%%%%%%%%%%%%%%%%%%%%%%%%%%%%%%%%%%%%
\section*{acknowledgement}
    This work was supported by the National Natural Science Foundation of China (Grant No. 12171271) and by Hong Kong RGC General Research Fund PolyU/15303124.

\appendix
\section*{Appendix}
\section{Proof of Lemma \ref{Lemma: Simplex ball}}\label{Section: app_proof_simplexball}

\begin{proof} %[Proof of Lemma \ref{Lemma: Simplex ball}]
    (1) By the definition of the simplex ball $S(\bfx,d)$, we have
 \begin{align*}
 	S(\bfone_n/n, 1/n) &= \frac 1n \bfone_n + n \times \frac 1n S_0 = \frac 1n \bfone_n + S_0
 	 = S_n .
 \end{align*}
Furthermore, we have
\[
  \Conv\left\{
   n \bfe_i - \bfone_n: \ i \in [n]
  \right\} = n \Conv\left\{
   \bfe_i: \ i \in [n]
  \right\} - \bfone_n = n S_n - \bfone_n = nS_0 .
\]
Consequently, the characterization \eqref{SimplexBall} holds.

%    \begin{equation*}
%    \begin{aligned}
%        S(\frac{1}{n}\bfone_n, \frac{1}{n}) &=
%        \{\frac{1}{n}\bfone_n+\frac{1}{n}r\vert r\in\text{conv}\{n\bfe_i-\bfone_n:i\in [n] \} \} \\
%        &= \{\frac{1}{n}\bfone_n+\frac{1}{n} \sum_{i=1}^n x_i(n\bfe_i-\bfone_n)
%        \vert \bfx\geq 0, \sum_{i=1}^nx_i=1 \} \\
%        &= \{ \bfx \vert \bfx\geq 0, \sum_{i=1}^nx_i=1 \} = \mathcal{S}_n.
%    \end{aligned}
%    \end{equation*}

    (2)
    On one hand, for every \(\bfy \in S(\bfx, d) \cap S_n\), there exists \(\bflambda \in S_n\) such that \(y_i = x_i - d + nd\lambda_i, \forall i \in [n]\). Define for each \(i \in [n]\),
    \[
    \widehat{\lambda}_i = \left\{
    %\begin{cases}
    \begin{array}{ll}
    \frac{nd\lambda_i}{\sum_{j=1}^n \min\{d, x_j\}} & \text{if } x_i \geq d, \\ [2ex]
    \frac{x_i - d + nd\lambda_i}{\sum_{j=1}^n \min\{d, x_j\}} & \text{if } x_i < d.
    %\end{cases}
    \end{array}
    \right .
    \]
    Since \(\bfy \in S_n\) and \(\bflambda \in S_n\), we have \(nd\lambda_i \geq 0\) and \(x_i - d + nd\lambda_i = y_i \geq 0\), which shows that \(\widehat{\lambda}_i \geq 0\) for each \(i \in [n]\). Moreover, let
    \(
    {\mathcal{I}}_- := \{i \in [n] \mid x_i < d\}\) be an index set. Then we have
    \[
    \sum_{i=1}^n \widehat{\lambda}_i = \frac{\sum_{i \in {\mathcal{I}}_-} (x_i - d) + nd}{\sum_{i=1}^n \min\{d, x_i\}} = \frac{\sum_{i \in {\mathcal{I}}_-} x_i + (n - |{\mathcal{I}}_-|) d}{\sum_{i \in {\mathcal{I}}_-} x_i + (n - |{\mathcal{I}}_-|) d} = 1.
    \]
    As above, we have verified that
    $\widehat{\bflambda} \in S_n$.
    We now show that $\bfy = (\widehat{\bfx} - \widehat{d}\bfone_n) + n\widehat{d}\widehat{\bflambda}$, where \(\widehat{\bfx}\) and \(\widehat{d}\) are defined in \eqref{Eq: definition_d}.
    This follows from the fact that, for each \(i \in [n]\),
    %\[
    \begin{align*}
    & \widehat{x}_i - \widehat{d} + n\widehat{d}\;\widehat{\lambda}_i \\
    =& \max\{x_i, d\} + \widehat{d} - d - \widehat{d} + n \frac{\sum_{j=1}^n \min\{d, x_j\}}{n}
    \times \frac{\min\{x_i - d, 0\} + nd\lambda_i}{\sum_{j=1}^n \min\{d, x_j\}} \\
    =& \max\{x_i, d\} - d + \min\{x_i - d, 0\} + nd\lambda_i 
    =\; x_i - d + nd\lambda_i = y_i.
    \end{align*}
    %\]
    Thus, we have \(\bfy \in S(\widehat{\bfx}, \widehat{d})\), leading to \(S_n \cap S(\bfx, d) \subset S(\widehat{\bfx}, \widehat{d})\).

    On the other hand, for every \(\bfy \in S(\widehat{\bfx}, \widehat{d})\), there exists \(\widehat{\bflambda} \in S_n\) such that \(y_i = \widehat{x}_i - \widehat{d} + n\widehat{d}\; \widehat{\lambda}_i\). Due to the fact that for \(i \in [n]\), \(y_i \geq \widehat{x}_i - \widehat{d} = \max\{x_i, d\} - d \geq 0\) and
    \[
    \sum_{i=1}^n y_i = \sum_{i=1}^n \max\{x_i, d\} + n\widehat{d} - nd 
    = \sum_{i=1}^n \max\{x_i, d\} + \sum_{i=1}^n \min\{x_i, d\} - nd
    = \sum_{i=1}^n x_i = 1,
    \]
    we have \(\bfy \in S_n\). We now turn to show that \(\bfy \in S(\bfx, d)\).

    Let \(\lambda_i := \frac{\max\{x_i, d\} - x_i + \sum_{j=1}^n \min\{x_j, d\} \widehat{\lambda}_i}{nd}\). It is not difficult to verify that \(\lambda_i \geq 0\) and \(\sum_{i=1}^n \lambda_i = 1\). Moreover, we have
    \begin{align*}
    & x_i - d + nd\lambda_i 
    = x_i - d + \max\{x_i, d\} - x_i + \sum_{j=1}^n \min\{x_j, d\} \widehat{\lambda}_i \\
    =& (\max\{x_i, d\} + \widehat{d} - d) - \widehat{d} + n\widehat{d}\;\widehat{\lambda}_i 
    = \widehat{x}_i - \widehat{d} + n\widehat{d}\;\widehat{\lambda}_i = y_i,
    \end{align*}
   implying \(\bfy \in S(\bfx, d)\). Thus \(S(\widehat{\bfx}, \widehat{d}) \subset S_n \cap S(\bfx, d)\).
    This finishes the proof for \eqref{Eq: definition_d}.

    We now proceed to prove \eqref{Eq: intsect_simplex_balls}.
    Following the definition of $S(\bfx, d)$, we have
    \[
      S(\bfx, d) = (nd) S_n + (\bfx - d\bfone_n).
    \]
    Translating to $(\bfx_1, d_1)$ and $(\bfx_2, d_2)$, we have
    \begin{equation*}
    \begin{aligned}
        S(\bfx_1,d_1)&= (nd_1) S_n +(\bfx_1-d_1\bfone_n), \\
        S(\bfx_2,d_2)&= (nd_2) S_n + (\bfx_2 - d_2 \bfone_n) \\
        &= (nd_1) \left[ n \times \frac{d_2}{nd_1} S_n +
        \left(
          \frac{\bfx_2 - (\bfx_1 - d_1 \bfone_n)}{ nd_1} - \frac{d_2}{nd_1} \bfone_n
        \right)
        \right] +(\bfx_1-d_1\bfone_n )\\
        &=
        nd_1S \left(\frac{\bfx_2-(\bfx_1-d_1\bfone_n)}{nd_1},\frac{d_2}{nd_1} \right)+(\bfx_1-d_1\bfone_n).
    \end{aligned}
    \end{equation*}
    Therefore,
    \[
     S(\bfx_1,d_1) \cap S(\bfx_2,d_2)
     = (\bfx_1-d_1\bfone_n ) + (nd_1) \left[
      S_n \cap \left( \frac{\bfx_2-(\bfx_1-d_1\bfone_n)}{nd_1},\frac{d_2}{nd_1} \right)
     \right]
    \]
    Using \eqref{Eq: definition_d}, we have
    \[
    S_n \cap S \left(\frac{\bfx_2-(\bfx_1 - d_1\bfone_n)}{nd_1},\frac{d_2}{nd_1} \right)=S(\widehat{\bfx},\widehat{d}),\]
    where
    \begin{equation*}
    \begin{aligned}
        \widehat{d} &= \frac{1}{n}\sum_{i=1}^n\min \left\{\frac{d_2}{nd_1},\frac{x_2(i)-x_1(i)+d_1}{nd_1} \right\} 
        = \frac{\sum_{i=1}^n\min\{d_2,x_2(i)-x_1(i)+d_1\}}{n^2d_1} \\
        &\stackrel{(a)}{=}  \frac{1+\sum_{i=1}^n\min\{d_1-x_1(i),d_2-x_2(i) \}}{n^2d_1}\\
        \widehat{x}_i &= \max \left\{\frac{d_2}{nd_1}, \frac{x_2(i)-x_1(i)+d_1}{nd_1} \right\}+ \left(\widehat{d}-\frac{d_2}{nd_1} \right),\quad \forall i\in [n].
    \end{aligned}
    \end{equation*}
%    and
%    \begin{equation*}
%        \widehat{x}_i = \max \left\{\frac{d_2}{nd_1}, \frac{x_2(i)-x_1(i)+d_1}{nd_1} \right\}+ \left(\widehat{d}-\frac{d_2}{nd_1} \right),\quad \forall i\in [n].
%    \end{equation*}
    Here, $(a)$ follows from the fact $\bfx_2\in S_n$.
    We then have
 \begin{eqnarray*}
 	S(\bfx_1,d_1) \cap S(\bfx_2,d_2)
 	&=&  (nd_1) S(\widehat{\bfx},\widehat{d}) + (\bfx_1-d_1\bfone_n ) \\
 	&=& (nd_1) \left[
 	 (n\widehat{d}) S_n + (\widehat{\bfx} - \widehat{d} \bfone_n)
 	\right] + (\bfx_1-d_1\bfone_n ) \\
 	&=& n (n d_1 \widehat{d}) S_n + \left[
 	 (\bfx_1 + nd_1 \widehat{\bfx} - d_1 \bfone_n) - (n d_1 \widehat{d}) \bfone_n
 	\right] \\
 	&=& S\Big(
 	\bfx_1 + nd_1 \widehat{\bfx} - d_1 \bfone_n,\  n d_1 \widehat{d}
 	\Big) 
 	= S(\bfx_3, d_3),
 \end{eqnarray*}
  where
%    \begin{equation*}
%        d_3 = nd_1\widehat{d}= \frac{1+\sum_{i=1}^n\min\{d_1-x_1(i),d_2-x_2(i) \}}{n},
%    \end{equation*}
%    and
    \begin{equation*}
    \begin{aligned}
    	 d_3 &= nd_1\widehat{d}= \frac{1+\sum_{i=1}^n\min\{d_1-x_1(i),d_2-x_2(i) \}}{n},\\
        x_3(i) &= nd_1\widehat{x}(i)+(x_1{(i)}-d_1)\\
        &= \max\{d_2, x_2(i)-x_1(i)+d_1 \}+(nd_1\widehat{d}-d_2)+(x_1{(i)}-d_1) \\
        &=  \max\{d_2, x_2(i)-x_1(i)+d_1 \} + (x_1{(i)}-d_1 -d_2) + d_3 \\
        &= \max\{x_1(i)-d_1,x_2(i)-d_2 \}+ d_3.
    \end{aligned}
    \end{equation*}
    Thus, we have proven \eqref{Eq: intsect_simplex_balls}.

    (3) Since the optimal solution $\bfy^*$ lies on the extreme point of $S(\bfx,d)$, we have
    \begin{equation*}
    \begin{aligned}
        \bfy^* &= {\argmin}_{\bfy=\bfx+d(n\bfe_i-\bfone_n),i\in [n]} \langle \bfc,\bfx+d(n\bfe_i-\bfone_n)\rangle \\
        &= {\argmin}_{\bfy=\bfx+d(n\bfe_i-\bfone_n),i\in [n]} \langle \bfc,e_i\rangle 
        = \bfx+d(n\bfe_{i^*}-\bfone_n),
    \end{aligned}
    \end{equation*}
    where $i^* = \argmin_{i\in [n]}c_i$.

    (4) By \eqref{SimplexBall-New}, for any points $\bfy_1,\bfy_2\in S(\bfx,d)$, there exist $\bflambda_1,\bflambda_2\in S_n$ such that $\bfy_i=(\bfx-d\bfone_n)+nd\bflambda_i$ for $i\in [2]$. Thus, we have
    \begin{equation*}
        \max_{\bfy_1,\bfy_2\in S(\bfx,d)}\lVert \bfy_1-\bfy_2 \rVert = nd\cdot\max_{\bflambda_1,\bflambda_2\in S_n} \lVert \bflambda_1-\bflambda_2\rVert= \sqrt{2}nd.
    \end{equation*}

    (5) Let $\bflambda=\frac{1}{n}(\bfone_n+\frac{\bfy-\bfx}{d})$. Since $\lVert\bfy-\bfx\rVert\leq d$, we have $\lambda_i\geq 0$. Moreover, since $\bfx,\bfy\in S_n$, we have $\sum_{i=1}^n\lambda_i=1$, which implies $\bflambda\in S_n$. Thus by \eqref{SimplexBall-New}, $\bfy=(\bfx-d\bfone_n)+nd\bflambda\in S(\bfx,d)$.
        We now turn to the last part of Lemma \ref{Lemma: Simplex ball}(5). This simply follows from
    \begin{equation*}
        \max_{\bfy\in S(\bfx,d)}\lVert \bfy-\bfx \rVert = nd\cdot\max_{\bflambda\in S_n} \lVert \bflambda-\frac{\bfone_n}{n}\rVert = \sqrt{n(n-1)}d\leq nd.
    \end{equation*}
    The proof is completed.
\end{proof}

%%%%%%%%%%%%%%%%%%%%%%%%%%%%%%%%%%%%%%%%%%%%%%%%%%%%%%%%%%%%%%%%%%%%%%%%%%%%%%%

\section{Proofs for Section \ref{Section: gen_P}}\label{Section: app_prove_gen_P}

\subsection{A Useful Bound}

%We first give a proof for Lemma \ref{Lemma: SLMO_P}.
The proof of Lemma \ref{Lemma: SLMO_P} relies on the following lemma, whose proof used some key technical results established in \cite{garber2016linearly}.
In particular, for given $\bfx, \bfy \in \calP$ and $\bflambda \in \calM({\bfx})$,
there must exist $\bfz \in \calP$ and $\gamma \in [0, 1]$ such that
\[
	\bfy = \gamma \bfx + (1-\gamma) \bfz 
	= \gamma \sum_{j=1}^N \lambda_j \bfv_j + (1-\gamma) \bfz 
	= \sum_{i=1}^N \Big(
	\lambda_j - \lambda_j (1-\gamma)
	\Big) \bfv_j + (1-\gamma) \bfz .
\]
%\begin{eqnarray*}
%	\bfy &=& \gamma \bfx + (1-\gamma) \bfz \\
%	     &=& \gamma \sum_{j=1}^N \lambda_j \bfv_j + (1-\gamma) \bfz \\
%	     &=& \sum_{i=1}^N \Big(
%	      \lambda_j - \lambda_j (1-\gamma)
%	     \Big) \bfv_j + (1-\gamma) \bfz .
%\end{eqnarray*}
Let $\Delta_j := \lambda_j (1-\gamma) \in [0, \lambda_j]$. We then have
$
 1- \gamma = \sum_{j=1}^N \Delta_j =: \Delta .
$
To put another way, the point $\bfy$ can always be represented by
\be \label{y-Representation}
  \bfy = \sum_{j=1}^N(\lambda_j-\Delta_j)\bfv_j+ \Delta \bfz,
\ee
for some $\bfz \in \calP$, $\Delta_j \in [0, \lambda_j]$.
Since $\calP$ is compact. There must exist a representation of \eqref{y-Representation} with the smallest $\Delta$
among all such representations.
An important fact established in \cite[lemma~5.3]{garber2016linearly} is that
the minimal value $\Delta$ can be bounded.
We refine this bound below for the largest $\Delta_j$ in $\Delta$.
%We have the following bound on the largest $\Delta_i$.

\begin{lemma}\label{Lemma: Delta_bound}
    Let $\bfx, \bfy \in\calP$ with $\bflambda \in \calM(\bfx)$
        %$\bflambda\in\mathcal{S}_N$ such that $\bfx=\sum_{i=1}^N\lambda_i\bfv_i$,
    %and let $\bfy\in\calP$.
    %Write $\bfy=\sum_{i=1}^N(\lambda_i-\Delta_i)\bfv_i+(\sum_{i=1}^N\Delta_i)\bfz$ for values $\Delta_i\in [0,\lambda_i],\forall i\in [N]$ and $\bfz\in\calP$,
    Let $\bfy$ be represented as in \eqref{y-Representation} with
    $\Delta$ having been minimized. Then it holds that
    \begin{equation*}
        \max_{i\in [N]}\{\Delta_i\}\leq \frac{\psi}{\xi}\lVert \bfx-\bfy\rVert.
    \end{equation*}
\end{lemma}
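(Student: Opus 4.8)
The plan is to work directly with the minimal-$\Delta$ representation and to isolate the single largest weight $\Delta_i$ by projecting onto one carefully chosen active inequality, rather than bounding the whole sum $\Delta$ as in \cite[Lemma~5.3]{garber2016linearly}. First I would dispose of the trivial case $\bfx=\bfy$ (then $\Delta=0$) and fix an index $i$ attaining $\max_{j}\Delta_j$, assuming $\Delta_i>0$. Subtracting $\bfx=\sum_j\lambda_j\bfv_j$ from \eqref{y-Representation} and using $\sum_j\Delta_j=\Delta$ gives the displacement identity $\bfy-\bfx=\sum_j\Delta_j(\bfz-\bfv_j)$, which is the object I would project onto a suitable constraint normal.

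The heart of the argument is to exploit the minimality of $\Delta$ to produce a row of $A_2$ that is active at $\bfz$ and strictly slack at $\bfv_i$. Concretely, I would decrease $\Delta_i$ to $\Delta_i-\varepsilon$ for small $\varepsilon>0$, keep every other $\Delta_j$ fixed, and solve the representation identity for the new center; a short computation shows the only consistent choice is $\bfz'=(\Delta\bfz-\varepsilon\bfv_i)/(\Delta-\varepsilon)$, so that $\bfz'$ moves away from $\bfz$ along the direction $\bfz-\bfv_i$. Since this would be a feasible representation of $\bfy$ with strictly smaller total weight whenever $\bfz'\in\calP$, minimality forces $\bfz'\notin\calP$ for all small $\varepsilon>0$. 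As $\bfz'\to\bfz\in\calP$ along $\bfz-\bfv_i$, and since $A_1(\bfz-\bfv_i)=0$ rules out the equality constraints, some row $l$ of $A_2$ must satisfy $A_2(l)\bfz=b_2(l)$ and $A_2(l)(\bfz-\bfv_i)>0$; hence $A_2(l)\bfv_i<b_2(l)$. Because $\bfv_i$ is a vertex that does not meet constraint $l$ with equality, the definition of $\xi$ yields $b_2(l)-A_2(l)\bfv_i\ge\xi$.

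With this constraint in hand I would finish by projecting the displacement identity onto $A_2(l)$. Using $A_2(l)\bfz=b_2(l)$ gives $A_2(l)(\bfy-\bfx)=\sum_j\Delta_j\bigl(b_2(l)-A_2(l)\bfv_j\bigr)$, and every summand is nonnegative precisely because each $\bfv_j\in\calP$ (this is the payoff of choosing a constraint active at $\bfz$ rather than at $\bfv_i$). Discarding all terms but $j=i$ and invoking the slack bound gives $A_2(l)(\bfy-\bfx)\ge\Delta_i\bigl(b_2(l)-A_2(l)\bfv_i\bigr)\ge\xi\Delta_i$. On the other hand, since the rows of $A_2$ are normalized to unit length, Cauchy--Schwarz gives $A_2(l)(\bfy-\bfx)\le\|\bfy-\bfx\|\le\psi\|\bfx-\bfy\|$, where the last step only uses $\psi\ge1$ (a single unit row already has operator norm $1$). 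Combining the two estimates yields $\xi\Delta_i\le\psi\|\bfx-\bfy\|$, which is the claim.

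The main obstacle is the minimality/perturbation step: making rigorous that shrinking $\Delta_i$ must be blocked by an \emph{inequality} constraint active at $\bfz$ (not by the box constraints $\Delta_j\in[0,\lambda_j]$, nor by the affine hull), and that this blocking constraint is strictly slack at the vertex $\bfv_i$ so that $\xi$ applies. Two small points need care: that $\bfz\ne\bfv_i$ in a minimal representation (otherwise $\Delta_i$ could be driven to $0$ for free), which guarantees the perturbation direction is nonzero; and that, since there are finitely many constraints, one fixed row $l$ is violated along a sequence $\varepsilon\downarrow0$ and can be selected once and for all. Everything after this step is elementary, and $\psi$ enters only through the harmless inequality $\psi\ge1$; the normalization of $A_2$ in fact delivers the sharper constant $1/\xi$.
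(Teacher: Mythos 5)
Your proof is correct, but it takes a genuinely different route from the paper's. The paper's proof is citation-based: it invokes Lemmas 5.3--5.5 of Garber--Hazan to get (i) a nonempty active set $C(\bfz)$ at the center $\bfz$, (ii) for each $i$ with $\Delta_i>0$ an active row with slack at least $\xi$ at $\bfv_i$, and (iii) a basis $C_0(\bfz)$ of active rows whose matrix $A_{2,z}$ satisfies $\|A_{2,z}\|\le\psi$; it then bounds $\|\bfx-\bfy\|^2\ge \frac{1}{\psi^2}\|A_{2,z}(\bfx-\bfy)\|^2$, drops cross terms by nonnegativity, and obtains the stronger aggregate estimate $\|\bfx-\bfy\|^2\ge\frac{\xi^2}{\psi^2}\sum_{i:\Delta_i>0}\Delta_i^2$, of which the stated max-bound is a corollary. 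You instead reprove the key structural fact from scratch: your perturbation $\bfz'=(\Delta\bfz-\varepsilon\bfv_i)/(\Delta-\varepsilon)$ is exactly the right one (it preserves the representation identity and the box constraints $\Delta_j\in[0,\lambda_j]$ for small $\varepsilon$), minimality forces $\bfz'\notin\calP$, the equality constraints are immune since $A_1(\bfz-\bfv_i)=\bfzero$, and pigeonhole over the finitely many rows plus the limit $\varepsilon\downarrow 0$ produces a single row $l$ with $A_2(l)\bfz=b_2(l)$ and $A_2(l)\bfv_i<b_2(l)$, hence slack at least $\xi$ at the vertex $\bfv_i$; projecting $\bfy-\bfx=\sum_j\Delta_j(\bfz-\bfv_j)$ onto that one unit-norm row and discarding the other (nonnegative) terms finishes the argument. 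What each approach buys: yours is self-contained, avoids the basis-extraction and spectral-norm machinery entirely, and delivers the sharper constant $1/\xi$ (the stated bound then follows from $\psi\ge 1$, which holds since every matrix in $\mathbb{A}(\calP)$ contains a unit row); the paper's reuses existing machinery and controls the full $\ell_2$ norm of $(\Delta_i)_i$ rather than just its largest entry, which is where $\psi$ enters naturally. The caveats you flag are real but you resolve them correctly; the only shared hypothesis worth noting is that both arguments need the atoms $\bfv_i$ to be vertices of $\calP$ (so that the definition of $\xi$ applies), which is how the paper sets up $\calV=\calV(\calP)$.
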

\begin{proof}
    The claim is trivial for the case $\sum_{i=1}^N\Delta_i=0$.
    Now we suppose that $\sum_{i=1}^N\Delta_i>0$ (i.e., at least one $\Delta_i >0$).
    The following index sets $C(\bfz)$ and $C_0(\bfz)$ are defined in \cite{garber2016linearly}.
    We simply describe them and use some established results relating to them.
    Denote the index set $C(\bfz) :=\{j\in [m]\mid A_2(j)\bfz=b_2(j)\}$.
    By \cite[Lemma 5.3]{garber2016linearly} we have $C(\bfz)\neq\emptyset$
    since one $\Delta_i>0$.
    Let $C_0(\bfz)\subseteq C(\bfz)$ be such that the set $\{A_2(j)\}_{j\in C_0(\bfz)}$ forms a basis for the set $\{A_2(j)\}_{j\in C(\bfz)}$. Denote by $A_{2,{z}}\in\mathbb{R}^{|C_0(\bfz)|\times n}$ consisting of the set $\{A_2(j)\}_{j\in C_0(\bfz)}$. By definition we have $\|A_{2,{z}}\|\leq \psi$. Then we obtain
    \begin{equation*}
    \begin{aligned}
        \|\bfx-\bfy \|^2
        &=\left\|\sum_{i\in [N]:\Delta_i>0}\Delta_i(\bfv_i-\bfz) \right\|^2 \\
           %\geq \frac{1}{\|A_{2,{z}}\|^2}\left\|A_{2,{z}}\sum_{i\in [N]:\Delta_i>0}\Delta_i(\bfv_i-\bfz) \right\|^2 \\
        %& \geq \frac{1}{\psi^2}\left\|\sum_{i\in [N]:\Delta_i>0}\Delta_iA_{2,{z}}(\bfv_i-\bfz) \right\|^2 \\
        & \ge \frac{1}{\psi^2}\sum_{j\in C_0(\bfz)}\left (\sum_{i\in [N]:\Delta_i>0}\Delta_i(b_2(j)-A_2(j)\bfv_i) \right )^2 \\
        &\stackrel{(a)}{\geq} \frac{1}{\psi^2}\sum_{j\in C_0(\bfz)} \sum_{i\in [N]:\Delta_i>0}\Delta_i^2(b_2(j)-A_2(j)\bfv_i)^2 \\
        &= \frac{1}{\psi^2}\sum_{i\in [N]:\Delta_i>0}\sum_{j\in C_0(\bfz)} \Delta_i^2(b_2(j)-A_2(j)\bfv_i)^2,
    \end{aligned}
    \end{equation*}
    where the first inequality is established in the proof of \cite[Lemma~5.5]{garber2016linearly},
     $(a)$ follows from the fact that for any $i\in [N]$, and any $j\in C_0(\bfz)$ we have $b_2(j)-A_2(j)\bfv_i\geq 0$. Combining \cite[Lemma 5.3]{garber2016linearly} and \cite[Lemma 5.4]{garber2016linearly}, we obtain that for all $i\in [N]$ such that $\Delta_i>0$ there exists $j\in C_0(\bfz)$ such that $b_2(j)-A_2(j)\bfv\geq \xi$. Hence,
    \begin{equation*}
        \|\bfx-\bfy\|^2\geq \frac{\xi^2}{\psi^2}\sum_{i\in [N]:\Delta_i>0}\Delta_i^2\geq \frac{\xi^2}{\psi^2}\max_{i\in [N]}\{\Delta_i^2\}.
    \end{equation*}
    Thus we conclude that $\max_{i\in [N]}\{\Delta_i\}\leq \frac{\psi}{\xi}\lVert \bfx-\bfy\rVert$.
\end{proof}

\subsection{Proof of Lemma \ref{Lemma: SLMO_P}}

\begin{proof}%[Proof of Lemma \ref{Lemma: SLMO_P}]
    We begin by proving the first part. Write $\bfx=\sum_{i=1}^N\lambda_i\bfv_i$ for $\bflambda\in S_N$ and express $\bfy=\sum_{i=1}^N(\lambda_i-\Delta_i)\bfv_i+(\sum_{i=1}^N\Delta_i)\bfz$, where $\Delta_i\in [0,\lambda_i],\forall i\in [N]$ and $\bfz\in\calP$. Here, the sum $\Delta = \sum_{i=1}^N\Delta_i$ is minimized (as in Lemma \ref{Lemma: Delta_bound}). We then have
    %By Lemma \ref{Lemma: Delta_bound} and
    %the definition of $\eta$ in \eqref{eq: eta_def}, we obtain
    \begin{equation*}
        \max_{i\in [N]}\{\Delta_i\}
         \le \frac{\psi}{\xi}\lVert \bfx-\bfy\rVert
         \le \frac{\psi}{\xi} \times \frac{d D}{\eta}
        = d,
    \end{equation*}
   where the first inequality used Lemma \ref{Lemma: Delta_bound}, the second inequality used the assumption
   $\| \bfx - \bfy \| \le (dD)/\eta$, and the last equation is by the definition of
   $\eta$ in \eqref{eq: eta_def}.
    Express $\bfz$ as $\bfz=\sum_{i=1}^N\lambda_i'\bfv_i$, where $\bflambda'\in S_N$. We can then rewrite $\bfy$ as follows:
     \[
    		\bfy
    		=\sum_{i=1}^N(\lambda_i-\Delta_i+\Delta\lambda_i')\bfv_i 
    		= \sum_{i=1}^N\left((\lambda_i-d)+d-\Delta_i+\Delta\lambda_i'\right)\bfv_i.
    	\]
%    \begin{equation*}
%    \begin{aligned}
%        \bfy
%        =&\sum_{i=1}^N(\lambda_i-\Delta_i+\Delta\lambda_i')\bfv_i \\
%        =&\sum_{i=1}^N\left((\lambda_i-d)+d-\Delta_i+\Delta\lambda_i'\right)\bfv_i.
%    \end{aligned}
%    \end{equation*}
    Since $\max_{i\in [N]}\{\Delta_i\}\leq d$, we have $d-\Delta_i+\Delta\lambda_i'\geq 0$ for all $i\in [N]$. Moreover, the sum $\sum_{i=1}^N(d-\Delta_i+\Delta\lambda_i)=Nd$, which implies that $\frac{(d-\Delta_i+\Delta\lambda_i)_i}{Nd}\in S_N$. By the definition in \eqref{SimplexBall-New}, we have $\bflambda_y:=(\lambda_i-\Delta_i+\Delta\lambda_i')\in S(\bflambda,d)$, thus $\bfy\in S_{\calP}(\bfx,d)$. Moreover, we have
    \[
    \langle \bfy,\bfc \rangle = \langle \bflambda_y,\bfc_{ext} \rangle \geq \langle \bflambda^*,\bfc_{ext} \rangle = \langle \bfy^*,\bfc \rangle.
    \]

    We now turn to prove the second part. Referring to Algorithm~\ref{Alg: SLMO_P}, we note that
    \[
      \bflambda_+ - d\bfone_N = \max\{ \bflambda_x, d\bfone_N\} - d \bfone_N = \bflambda_x - \min\{\bflambda_x, d \bfone_N\}
    \]
    and
    \[
      N \widehat{d} = \sum_{i=1}^N \min\{\lambda_x(i), d \}.
    \]
    Denote
    ${\mathcal I}_+(\bflambda_x) :=\{i\in [N]\mid \lambda_x(i)>0\}$,
    $\delta_i := \min\{\lambda_x(i),d\}$, and
    $\delta := \sum_{i\in {\mathcal I}_+(\bflambda_x)}\delta_i$, the optimal solution
    $\bfy^*$ produced by Algorithm~\ref{Alg: SLMO_P} has
    \begin{equation*}
        \bfy^*=  \bflambda_+ + d \bfone_N + N \widehat{d} \bfe_{i^*}
        = \sum_{i\in {\mathcal I}_+(\bflambda_x)}(\lambda_x(i)-\delta_i)\bfv_i+\delta\bfv_{i^*},
    \end{equation*}
%    where $\Delta_i=\min\{\lambda_x(i),d\}, \Delta=\sum_{i\in B(\bfx)}\Delta_i$ and $i^*=\argmin_{i\in [N]}\langle \bfv_i,\bfc\rangle$.
Thus we have that
    \begin{equation*}
    \begin{aligned}
        \lVert \bfx-\bfy^*\rVert &= \lVert \sum_{i\in {\mathcal I}_+(\bflambda_x)}\min\{\lambda_x(i),d\}(\bfv_i-\bfv_{i^*})\rVert \\
        &\leq \sum_{i\in {\mathcal I}_+(\bflambda_x)}\min\{\lambda_x(i),d\}\lVert \bfv_i-\bfv_{i^*}\rVert 
        \leq |{\mathcal I}_+(\bflambda_x)|dD\leq (n+1)dD.
    \end{aligned}
    \end{equation*}
\end{proof}

%%%%%%%%%%%%%%%%%%%%%%%%%%%%%%%%%%%%%%%%%%%%%%%%%%%%%%
\subsection{Proof of Theorem \ref{Thm: Convergence_sFW_P}}

%Below we give the proof of Theorem \ref{Thm: Convergence_sFW_P}. The proof relies heavily on the framework of the proof of Theorem \ref{Thm: Convergence-SL1} and Lemma \ref{Lemma: SLMO_P}.

\begin{proof} %[Proof of Theorem \ref{Thm: Convergence_sFW_P}]
The proof follows the framework of the proof of Theorem \ref{Thm: Convergence-SL1} and Lemma \ref{Lemma: SLMO_P}.
We first claim that $\bfx^*\in S_{\calP}(\bfx_k,\frac{\eta}{D}d_k)$ and that $f(\bfx_k)-B_k\leq \frac{\mu d_k^2}{2}$. We prove this by induction.
    First, we have
    \begin{equation*}
        \frac{\mu d_0^2}{2}=f(\bfx_0)-B_0\geq f(\bfx_0)-f^*\stackrel{(a)}{\geq} \frac{\mu}{2}\lVert \bfx_{0}-\bfx^* \rVert^2,
    \end{equation*}
    where $(a)$ comes from \eqref{Eq: strongly_convex_property}.
    This implies that $\lVert \bfx_{0}-\bfx^* \rVert\leq d_0$, and by Lemma \ref{Lemma: SLMO_P}, we have $\bfx^*\in S_{\calP}(\bfx_0,\frac{\eta}{D}d_0)$.
    Therefore, the claim holds for $k=0$.

    Now suppose that $\bfx^*\in S_{\calP}(\bfx_t,\frac{\eta}{D}d_t)$ and $f(\bfx_t)-B_t\leq \frac{\mu d_t^2}{2}$ for all $t\leq k-1$.
    Let $\gamma :=\frac{\mu}{2L(n+1)^2\eta^2}$.
    In the same manner as the proof in Theorem \ref{Thm: Convergence-SL1}, for step size policy \eqref{Eq: step size_line}, \eqref{Eq: step size_smooth} or \eqref{Eq: step size_constant3}, we all have
    \begin{equation*}
    \begin{aligned}
        f(\bfx_k)-B_k\leq & (1-\gamma)(f(\bfx_{k-1})-B_{k-1})+\frac{L\gamma^2}{2}\lVert \bfy_k-\bfx_{k-1} \rVert^2 \\
        \stackrel{(d)}{\leq} & (1-\gamma)\frac{\mu}{2}d_{k-1}^2+\frac{L\gamma^2}{2}(n+1)^2\eta^2d_{k-1}^2 \\
        = & \left[ (1-\gamma)\frac{\mu}{2} + \frac{L\gamma^2(n+1)^2\eta^2}{2} \right]d_{k-1}^2,
    \end{aligned}
    \end{equation*}
    where $(d)$ is due to our inductive hypothesis and Lemma~\ref{Lemma: SLMO_P}.
    By plugging in the value of $\gamma$, and using $1-x\leq e^{-x}$, we have that
    \begin{equation*}
        f(\bfx_k)-B_k\leq \frac{\mu}{2}(1-\frac{\mu}{4L(n+1)^2\eta^2})d_{k-1}^2\leq \frac{\mu}{2}e^{-\frac{\mu}{4L(n+1)^2\eta^2}}d_{k-1}^2.
    \end{equation*}
    Combining the above inequality with the fact that $f(\bfx_k)-B_k=\frac{\mu}{2}(\sqrt{\frac{2(f(\bfx_k)-B_k)}{\mu}})^2$, and by the definition of $d_k$, we conclude that $f(\bfx_k)-B_k\leq \frac{\mu d_k^2}{2}$.
    By the inductive hypothesis, we know that $\bfx^*\in S_{\calP}(\bfx_t,d_t)$ holds for all $t\leq k-1$. Thus $B_{t+1}^w$ is a valid lower bound of $f^*$, and consequently, $B_k$ is also a lower bound of $f^*$. Now by \eqref{Eq: strongly_convex_property}, we have
    \begin{equation}\label{Eq: extreme_S_p}
%        \lVert \bfx_k-\bfx^*\rVert^2\leq \frac{2}{\mu}(f(\bfx_k)-f^*)\leq \frac{2}{\mu}(f(\bfx_k)-B_k)\leq d_k^2.
\lVert \bfx_k-\bfx^*\rVert^2\leq (2/\mu)(f(\bfx_k)-f^*)\leq (2/\mu)(f(\bfx_k)-B_k)\leq d_k^2.
    \end{equation}
    This implies that $\bfx^*\in S_{\calP}(\bfx_k,\frac{\eta}{D}d_k)$ by Lemma \ref{Lemma: SLMO_P}. Therefore, we have completed the proof of the claim.

    We now start to prove the conclusion in Theorem \ref{Thm: Convergence_sFW_P}.
    From the earlier proof, we know that $B_k\leq f^*$, thus confirming the first part of the inequality.
    By the definition of $d_k$ and the established claim, we have
    $$
    f(\bfx_k)-B_k\leq \frac{1}{2}\mu d_k^2 \leq \frac{\mu d_0^2}{2}e^{-\frac{\mu}{4L(n+1)^2\eta^2}k} .
    $$
    The proof is thus completed.
\end{proof}

\subsection{Proof of Theorem \ref{Thm: Convergence_Sp_sFW_P}}

The proof of Theorem \ref{Thm: Convergence_Sp_sFW_P} relies on the following lemma.
\begin{lemma}\label{Lemma: Simplex_ball_P}
    For any $\bflambda_1,\bflambda_2 \in S(\bflambda_x,d)$, define the two corresponding points:
   % $\bfy_j \in S_{\calP} ( \bfx, d)$, $j=1,2$ by
    \[
     \bfy_j=\sum_{i=1}^N\lambda_j (i)\bfv_i  \in S_{\calP} ( \bfx, d), \quad j=1, 2.
    \]
    We must have
    $
      \| \bfy_1 - \bfy_2 \| \le (n+1) d D.
    $
\end{lemma}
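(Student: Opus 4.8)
The plan is to reduce everything to the Euclidean diameter of $\calP$ by combining the defining representation of the simplex ball with the Carath\'eodory sparsity of $\bflambda_x$. As in Remark~\ref{Remark-SLMO-P}, I would take $\bflambda_x$ to be supported on $\mathcal{I}_+ := \mathcal{I}_+(\bflambda_x)$ with $s := |\mathcal{I}_+| \le n+1$. Since the center is supported on $\mathcal{I}_+$, so is every point of $S(\bflambda_x,d)$, and the ball may be viewed inside the coordinate subspace indexed by $\mathcal{I}_+$, which is an $s$-dimensional copy of the simplex-ball construction.

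First I would invoke the representation \eqref{SimplexBall-New}: because $\bflambda_1,\bflambda_2\in S(\bflambda_x,d)$, there are $\bm{\mu}_1,\bm{\mu}_2\in S_s$ (both supported on $\mathcal{I}_+$) with
\[
  \bflambda_j=(\bflambda_x-d\bfone)+s\,d\,\bm{\mu}_j,\qquad j=1,2,
\]
where $\bfone$ is the all-ones vector on $\mathcal{I}_+$. The essential feature here is the scaling factor $s\,d$, with $s\le n+1$ rather than $N$; this is precisely where the Carath\'eodory assumption is used. Subtracting the two identities cancels the common center and gives
\[
  \bflambda_1-\bflambda_2=s\,d\,(\bm{\mu}_1-\bm{\mu}_2).
\]

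Next I would map this through the atom matrix $V=[\bfv_1,\dots,\bfv_N]$. Using $\bfy_j=V\bflambda_j$,
\[
  \bfy_1-\bfy_2=V(\bflambda_1-\bflambda_2)=s\,d\,\bigl(V\bm{\mu}_1-V\bm{\mu}_2\bigr).
\]
Since $\bm{\mu}_1,\bm{\mu}_2\in S_s$ are honest convex-weight vectors, the points $V\bm{\mu}_1$ and $V\bm{\mu}_2$ are convex combinations of the atoms and therefore lie in $\calP$; hence $\|V\bm{\mu}_1-V\bm{\mu}_2\|\le D$ by definition of the diameter. This yields
\[
  \|\bfy_1-\bfy_2\|=s\,d\,\|V\bm{\mu}_1-V\bm{\mu}_2\|\le s\,d\,D\le (n+1)\,d\,D,
\]
which is the desired bound.

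I expect the main obstacle to be the dimensional bookkeeping rather than any hard estimate. A literal application of \eqref{SimplexBall-New} in the full ambient space $\mathbb{R}^N$ would produce the scaling factor $N$ and only the useless bound $N\,d\,D$; the entire gain comes from first passing to a Carath\'eodory representation of $\bfx$ supported on at most $n+1$ atoms, so that the active simplex ball is $(n+1)$-dimensional and only $n+1$ atoms are ever mixed. This is the same mechanism that turned $N$ into $n+1$ in the second part of Lemma~\ref{Lemma: SLMO_P}. A secondary point I would check explicitly is that the barycentric vectors $\bm{\mu}_j$ supplied by \eqref{SimplexBall-New} are genuine elements of $S_s$ (nonnegative, summing to one), even though $\bflambda_1,\bflambda_2$ as points of the simplex ball may themselves have negative coordinates; this is what legitimizes $V\bm{\mu}_j\in\calP$ and hence the passage to the diameter $D$.
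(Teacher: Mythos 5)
Your proof breaks at its first and most essential step: the claim that, because $\bflambda_x$ is supported on $\mathcal{I}_+$ with $|\mathcal{I}_+| = s \le n+1$, every point of $S(\bflambda_x,d)$ is also supported on $\mathcal{I}_+$, so that the ball may be treated as the $s$-dimensional simplex ball with scaling $sd$. This is false. Applying Definition~\ref{Def: simplex_ball} in $\mathbb{R}^N$,
\[
  S(\bflambda_x,d)=\Big\{(\bflambda_x-d\bfone_N)+Nd\,\bm{\mu}\ \big|\ \bm{\mu}\in S_N\Big\},
\]
so the offset $-d$ is subtracted from \emph{every} coordinate, not just those in $\mathcal{I}_+$: the vertex $(\bflambda_x-d\bfone_N)+Nd\,\bfe_i$ equals $-d\neq 0$ in every coordinate $j\notin\mathcal{I}_+\cup\{i\}$. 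Hence the representation $\bflambda_j=(\bflambda_x-d\bfone)+sd\,\bm{\mu}_j$ with $\bm{\mu}_j\in S_s$ that you subtract is simply not available; what \eqref{SimplexBall-New} actually supplies has scaling $Nd$, and then your own computation yields only $\|\bfy_1-\bfy_2\|\le NdD$, the bound you rightly call useless. Worse, the gap cannot be repaired within your strategy, because your argument never uses that the points lie in $\calP$, and without that constraint the bound $(n+1)dD$ is false. Take $n=1$, $\calP=[0,1]$ with atoms $\bfv_1=0$, $\bfv_2=\tfrac12$, $\bfv_3=1$ (so $N=3$, $D=1$), $\bflambda_x=(0,1,0)$ (so $s=1\le n+1$), $d=0.1$: the vertices of $S(\bflambda_x,d)$ are $(0.2,0.9,-0.1)$, $(-0.1,1.2,-0.1)$, $(-0.1,0.9,0.2)$, whose images under $V$ are $0.35$, $0.5$, $0.65$, so the image of the ball is $[0.35,0.65]$ with diameter $0.3=NdD>0.2=(n+1)dD$. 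Sparsity of the center does not confine the ball; any valid proof must exploit the additional restriction $\bflambda_j\in S_N$ (equivalently $\bfy_j\in\calP$), which is exactly the setting in which the lemma is used inside Theorem~\ref{Thm: Convergence_Sp_sFW_P}, where the inner loop runs over the image of $S(\bflambda_{k-1},d_{k-1})\cap S_N$.

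This is also where the paper's argument is genuinely different from yours. It first reduces to extreme points (the diameter of a compact convex set is attained at vertices), and then, instead of representing points through the center, it characterizes each vertex $\bfu$ as the unique minimizer of a linear functional over $S_{\calP}(\bfx,d)\cap\calP$ and invokes Lemma~\ref{Lemma-SLMOP}: by the output structure of Alg.~\ref{Alg: SLMO_P}, every such minimizer has the form
\[
  \bfu=\sum_{j=1}^N\big(\lambda_x(j)-\delta_j\big)\bfv_j+\delta\,\bfz,\qquad
  \delta_j=\min\{\lambda_x(j),d\},\quad \delta=\sum_{j=1}^N\delta_j,\quad \bfz\in\calP,
\]
where the frozen part $\sum_j(\lambda_x(j)-\delta_j)\bfv_j$ and the scalar $\delta$ are the \emph{same} for all vertices. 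The difference of two vertices therefore collapses to $\delta(\bfz_1-\bfz_2)$, giving $\|\bfu_1-\bfu_2\|\le\delta D$, and Carath\'eodory sparsity enters only at this point, through $\delta=\sum_{i\in\mathcal{I}_+(\bflambda_x)}\min\{\lambda_x(i),d\}\le|\mathcal{I}_+(\bflambda_x)|\,d\le(n+1)d$. In other words, $n+1$ bounds the total amount of mass the oracle is allowed to move, not the dimension of the ball; this cancellation-of-the-frozen-part mechanism, which requires membership in $\calP$, is what your proposal is missing.
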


\begin{proof}
We note that $S_{\calP} ( \bfx, d)$ is compact. Let $\mathcal{V}(S)$ denote the set of its vertices. Obviously,
we have
\begin{equation*}
	\|\bfy_1-\bfy_2\|\leq \max_{\bfu_1,\bfu_2\in\mathcal{V}(S)}\|\bfu_1-\bfu_2\|.
\end{equation*}
For $\bfu_1 \in \calV(S)$ being a vertex of $S_{\calP} ( \bfx, d)$,
according to the separation theorem \cite{rockafellar1997convex} that there exists a vector $\bfc_1 \in \mathbb{R}^N$
such that
\[
  \langle \bfc_1, \; \bfu_1 \rangle < \langle \bfc_1, \; \bfu \rangle \quad \mbox{for all} \
 \bfu \in \mathcal{V} (S) \setminus \{ \bfu_1 \} .
\]
In other words, $\bfu_1$ is the unique solution of the following problem:
\[
  \min \; \langle \bfc_1, \; \bfu \rangle \quad \mbox{s.t.} \quad \bfu \in S_{\calP} (\bfx, d) \cap \calP .
\]
It follows Lemma~\ref{Lemma-SLMOP} that $\bfu_1$ can be represented as
\[
 \bfu_1 = \sum_{j=1}^N ( \lambda_x (j) - \delta_j) \bfv_j + \delta \bfz_1 \quad \mbox{for some} \ \bfz_1 \in \calP ,
\]
where $\delta_j := \min\{\lambda_x(j),d\}$ and $\delta :=\sum_{j=1}^N \delta_j$ independent of $\bfz_1$.
Similarly, $\bfu_2$ has a representation:
\[
\bfu_2 = \sum_{j=1}^N ( \lambda_x (j) - \delta_j) \bfv_j + \delta \bfz_2 \quad \mbox{for some} \ \bfz_2 \in \calP .
\]
Therefore, we have
    \begin{eqnarray*}
        \|\bfy_1-\bfy_2\| &\leq & \max_{\bfu_1,\bfu_2\in\mathcal{V}(S)}\|\bfu_1-\bfu_2\| \\
        &\leq& \delta \max_{\bfz_1,\bfz_2\in \calP } \|\bfz_1 - \bfz_2\|
                    \leq \delta D \\
        &=& D \sum_{i \in \mathcal{I}_+(\bflambda_x)} \delta_i  
        \leq  D | \mathcal{I}_+(\bflambda_x) |d
        \leq (n+1)d D,
    \end{eqnarray*}
where $\mathcal{I}_+(\bflambda_x) : = \left\{ i \ | \
 \lambda_i(x) > 0
\right\}$ and we have used $|\mathcal{I}_+(\bflambda_x) | \le (n+1)$.
\end{proof}

\begin{proof}[Proof of Theorem \ref{Thm: Convergence_Sp_sFW_P}]
    We first claim that $\bfx^*\in S_{\calP}({\bfx}_k,{d}_k)$ for any $k\geq 0$ and prove this by induction.
    From the proof of Theorem \ref{Thm: Convergence_sFW_P}, this is true for $k=0$.
    Now suppose that $\bfx^*\in S_{\calP}({\bfx}_{k-1},{d}_{k-1})$ for some $k\geq 1$.
    Note that the inner loop of Alg.~\ref{Alg: rSFW-P} corresponds to the standard Frank-Wolfe algorithm.
    By Theorem \ref{Thm: FW} and Lemma \ref{Lemma: Simplex_ball_P}, we have
    \begin{equation*}
        f(\bfp_j)-f^*\leq \frac{2L}{j+1}\left((n+1)^2{d}_{k-1}D\right)^2 = \frac{2L(n+1)^2{d}_{k-1}^2D^2}{j+1}
    \end{equation*}
    hold for all $j\in [J]$. In the case where the inner loop terminates at $j = J$, we obtain
    \begin{equation*}
        f(\bfx_k)-f^*=f(\bfp_J)-f^*\leq f(\bfp_J)-C_J\leq
        %\frac{2L(n+1)^2{d}_{k-1}^2D^2}{\frac{4\rho^2(n+1)^2\eta^2L}{\mu}} =
        \frac{\mu d_{k}^2D^2}{2\eta^2}.
    \end{equation*}
    Similarly, if the inner loop is interrupted due to lines 9-11 of the algorithm, we still have $f(\bfx_k)-f^*\leq f(\bfp_j)-C_j\leq \frac{\mu}{2\rho^2\eta^2}{d}_{k-1}^2D^2 = \frac{\mu d_{k}^2D^2}{2\eta^2}$.
    Using the fact that $f(\bfx_k)-f^*\geq \frac{\mu}{2}\lVert \bfx_k-\bfx^*\rVert^2$, we have $\lVert \bfx_k-\bfx^*\rVert^2\leq \frac{d_{k}^2D^2}{\eta^2}$, which implies via Lemma \ref{Lemma: SLMO_P} that $\bfx^*\in S_{\calP}({\bfx}_k,{d}_k)$.

    We now start to prove the conclusion in Theorem \ref{Thm: Convergence_Sp_sFW_P}. Since $d_0=\frac{\eta}{D}\sqrt{\frac{2(f(\bfx_0)-B_0)}{\mu}}$ and $d_k=\frac{{d}_{k-1}}{\rho}$, we have
    \begin{equation*}
        f(\bfx_k)-f^*\leq f(\bfx_k)-B_k \leq \frac{\mu d_{k}^2D^2}{2\eta^2} \leq (f(\bfx_0)-B_0)\rho^{-2k}.
    \end{equation*}
\end{proof}

\section{Properties of Some Common Polytopes}\label{Section: app_prop_poly}
\subsection{Carath  odory Representation Examples}\label{Section: app_Cara_rep} $\ $

$\ $\textbf{Hypercube:}
When $\calP$ is a hypercube $B_n:=\{\bfx\in\mathbb{R}^n\mid x_i\in[0,1],\forall i\in [n]\}$, any point $\bfx\in B_n$ can be naturally represented as
\[\bfx=\sum_{i=1}^{n-1}(x_{j_i}-x_{j_{i+1}})\bfv_i+x_{j_n}\bfone_n+(1-x_{j_1})\bfzero_n,\]
where $j_1,\dots,j_n$ is a permutation over $[n]$ such that $x_{j_1}\geq\dots\geq x_{j_n}$ and $\bfv_i$ is a vector with components from $j_1$ to $j_i$ equal to $1$ and the rest equal to $0$.

\textbf{$\ell_1$-ball:}
When $\calP$ is a $\ell_1$-ball $L_n:=\{\bfx\in\mathbb{R}^n\mid \sum_{i=1}^n|x_i|\leq 1\}$, any point $\bfx\in L_n$ can be naturally represented as
\begin{equation*}
    \bfx=\sum_{i=1}^{n-1}|x_i|(\text{sgn}(x_i)\bfe_i)+\left(|x_n|+s_x\right)(\text{sgn}(x_n)\bfe_n)+s_x(-\text{sgn}(x_n)\bfe_n) ,
\end{equation*}
where $s_x={1-\sum_{i=1}^n|x_i|}/{2}$ and $\text{sgn}(x)=1$ if $x\ge 0$ and $-1$ otherwise.
%$\text{sgn}(x) =     \begin{cases}
%    1 & \text{if } x \geq 0, \\
%    -1 & \text{if } x <0.
%    \end{cases}$

\textbf{Flow polytope:}
Let $G$ be a \textit{directed acyclic graph} (DAG) with a set of vertices $V$ and  edges $E$ such that $|E|=n$.
Let $s,t$ be two vertices in $V$, referred to as the \textit{source} and \textit{target}, respectively.
The $s$ - $t$ flow polytope, here denoted by $\mathcal{F}_{s,t}$, is the set of all unit $s$ - $t$ flows in $G$.
For any point $\bfx\in\mathcal{F}_{s,t}$ and $i\in [n]$, the entry $x_i$ represents the amount of flow through edge $i\in [n]$, where the flow vector $\bfx$ satisfies the flow conservation constraints at each vertex, ensuring that the flow entering any vertex (except $s$ and $t$) equals the flow leaving it.
The extreme points of $\mathcal{F}_{s,t}$ are the extreme unit flows.
To find the Carath  odory representation of a given flow $\bfx\in\mathcal{F}_{s,t}$, we can proceed recursively as follows.

Starting with the flow $\bfx$, we repeatedly perform the following steps until $\bfx=\bfzero_n$:
\begin{enumerate}
    \item Remove all edges with zero flow from the graph.
    \item Identify the edge $i$ corresponding to the smallest non-zero flow in $\bfx$, i.e., $i\gets {\argmin}_{x_i>0}x_i$.
    \item Find the extreme unit flow $\bfv$ in the reduced graph that includes edge $i$.
    \item Subtract $x_i\bfv$ from the current flow, i.e., $\bfx\gets \bfx-x_i\bfv$.
\end{enumerate}
Since each operation eliminates at least one non-zero entry in the current flow, the loop will terminate within at most $m$ steps.
As a result, we obtain a Carath  odory representation of $\bfx$.
This algorithm can be implemented in $O(n^2)$ time when the graph is represented using sparsely structured adjacency matrices.

\subsection{Quantities of Some Common Polytopes}\label{Section: app_quantity}
\quad \textbf{Hypercube:} The diameter of $B_n$ is given by
\[
D(B_n) = \max_{\bfx,\bfy\in B_n}\|\bfx-\bfy \| = \|\bfone_n-\bfzero_n \| = \sqrt{n}.
\]
Since $B_n$ can be represented as
\[
B_n=\left\{\bfx\in\mathbb{R}^n\mid \left(\begin{array}{c}
    I_n  \\
     -I_n
\end{array} \right)\bfx\leq \left(\begin{array}{c}
    \bfone_n \\
    \bfzero_n
\end{array} \right)\right\},
\]
it follows from the definition in Subsection \ref{Subsection: Quantity_Polytope} that $\xi(B_n)=1$ and $\psi(B_n)=1$. Thus, the quantity of $B_n$ is
\[
    \eta(B_n) = {\psi(B_n)D(B_n)}/{\xi(B_n)} = \sqrt{n}.
\]

\textbf{$\ell_1$-ball:}
The diameter of $L_n$ is given by
\[
D(L_n) = \max_{\bfx,\bfy\in L_n}\|\bfx-\bfy \| = \|\bfe_1-(-\bfe_1) \| = 2.
\]
Note that $L_n$ can be described by the linear inequalities system $L_n=\{\bfx\in\mathbb{R}^n\mid A_2\bfx\leq \frac{1}{\sqrt{n}}\bfone_{2^n}\}$, where $A_2\in\mathbb{R}^{2^n\times n}$ is a matrix whose entries are either $\pm \frac{1}{\sqrt{n}}$ and whose rows all have unit $\ell_2$ norm.
Following the definition in Subsection \ref{Subsection: Quantity_Polytope}, we have $\xi(L_n)=\frac{2}{\sqrt{n}}$ and
\[
    \frac{1}{\sqrt{n}}= \max_{M\in\mathbb{A}(L_n)}\frac{1}{\sqrt{n}}\|M\|_F \leq \psi(L_n) = \max_{M\in\mathbb{A}(L_n)}\|M\|\leq \max_{M\in\mathbb{A}(L_n)}\|M\|_F = \sqrt{n},
\]
where $\|\cdot\|_F$ denotes the Frobenius norm. Thus, the quantity of $L_n$ can be estimated as
\[
    \eta(L_n) = {\psi(L_n)D(L_n)}/{\xi(L_n)} \in [1,n].
\]

\textbf{Flow Polytope: }
For every two extreme unit flows $\bfx_1,\bfx_2\in\calV(\mathcal{F}_{s,t})$, since $\calV(\mathcal{F}_{s,t})\subseteq \{0,1\}^n$, we have $\|\bfx_1-\bfx_2\|\leq \sqrt{n}$. Thus, the diameter of $\mathcal{F}_{s,t}$ can be estimated as
\[
    D(\mathcal{F}_{s,t}) = \max_{\bfx_1,\bfx_2\in\calV(\mathcal{F}_{s,t})}\|\bfx_1-\bfx_2\| \leq \sqrt{n}.
\]
When representing $\mathcal{F}_{s,t}$ using a system of linear equations and inequalities, the inequality constraints are given by $-I_n\bfx\leq \bfzero_n$.
Thus, by definition, we have $\xi(\mathcal{F}_{s,t})=\psi(\mathcal{F}_{s,t})=1$, leading to
\[
    \eta(\mathcal{F}_{s,t}) = D(\mathcal{F}_{s,t})\leq \sqrt{n}.
\]
It is worth noting that in the numerical experiment in Subsection~\ref{Subsection: num_sFW}, we set $\eta = D =\sqrt{66}$ while the dimension is $n=660$. This choice stems from the specific characteristics of the dataset used in \cite{lacoste2015global,garber2021frank}.
Specifically, we observe that each extreme unit flow contains exactly $33$ entries of $1$, with the remaining entries being $0$.
Consequently, we can estimate $\eta=D\leq \sqrt{66}$.

\section{Backtracking Details}\label{app-backtracking}
The routine of estimating local parameters $L,\mu$ and step-size $\delta$ is shown as follows. For rSFW, if we use the simple step-size, we can employ only the estimates of $L$ and $\mu$ without applying the corresponding step size. 
\begin{algorithm}[!ht]
\footnotesize
	\renewcommand{\algorithmicrequire}{\textbf{Input:}}
	\renewcommand{\algorithmicensure}{\textbf{Output:}}
	\caption{$\mbox{Backtracking-Routine}(\bfx,\bfd,L,\mu,\delta_{\text{max}})$}
	\label{Alg: backtracking}
	\begin{algorithmic}[1]
        \REQUIRE Iterate $\bfx\in\calP$, update direction $\bfd$, previous estimation $L$ and $\mu$.
        \STATE Choose $\tau_1>1,\tau_2\leq1$.
        \STATE $L\gets\tau_2L,\ \mu\gets\mu/\tau_2$
        \STATE $\delta\gets\min\left\{\frac{\langle\nabla f(\bfx),\bfd \rangle}{L\lVert\bfd\rVert^2},\delta_{\text{max}}\right\}$
        \WHILE{$f(\bfx+\delta\bfd)>f(\bfx)+\delta\langle\nabla f(\bfx),\bfd \rangle+\frac{\delta^2L}{2}\lVert\bfd\rVert^2$}
            \STATE $L\gets\tau_1L$
            \STATE $\mu\gets\min\left\{\frac{2(f(\bfx+\delta\bfd)-f(\bfx)-\delta\langle\nabla f(\bfx),\bfd \rangle)}{\delta^2\lVert\bfd\rVert^2},\mu\right\}$
            \STATE $\delta\gets\min\left\{\frac{\langle\nabla f(\bfx),\bfd \rangle}{L\lVert\bfd\rVert^2},\delta_{\text{max}}\right\}$
        \ENDWHILE
        \ENSURE $\delta,L,\mu$.
	\end{algorithmic}
\end{algorithm}

%%%%%%%%%%%%%%%%%%%%%%%%%%%%%%%%%%%%%%%%%%%%%%%%%%%%%%%
%%% Reference section. ²Î¿¼ÎÄÏ×
%%% citation in the content using "some words~\cite{1,2}".
%%% ~ is needed to make the reference number is on the same line with the word before it.
%%%%%%%%%%%%%%%%%%%%%%%%%%%%%%%%%%%%%%%%%%%%%%%%%%%%%%%

%\begin{acknowledgements}
%If you'd like to thank anyone, place your comments here
%and remove the percent signs.
%\end{acknowledgements}

% Authors must disclose all relationships or interests that 
% could have direct or potential influence or impart bias on 
% the work: 
%
% \section*{Conflict of interest}
%
% The authors declare that they have no conflict of interest.

% BibTeX users please use one of
%\bibliographystyle{spbasic}      % basic style, author-year citations
%\bibliographystyle{spmpsci}      % mathematics and physical sciences
%\bibliographystyle{spphys}       % APS-like style for physics
%\bibliography{}   % name your BibTeX data base

\begin{thebibliography}{10}
\providecommand{\url}[1]{{#1}}
\providecommand{\urlprefix}{URL }
\expandafter\ifx\csname urlstyle\endcsname\relax
  \providecommand{\doi}[1]{DOI~\discretionary{}{}{}#1}\else
  \providecommand{\doi}{DOI~\discretionary{}{}{}\begingroup
  \urlstyle{rm}\Url}\fi

\bibitem{beck2004conditional}
Beck, A., Teboulle, M.: A conditional gradient method with linear rate of
  convergence for solving convex linear systems.
\newblock Mathematical Methods of Operations Research \textbf{59}, 235--247
  (2004)

\bibitem{gartner2023optimization}
Bernd, G., Jaggi, M.: Optimization for machine learning.
\newblock Lecture Notes CS-439, ETH, Spring 2023  (2023)

\bibitem{bomze2021frank}
Bomze, I.M., Rinaldi, F., Zeffiro, D.: Frank--wolfe and friends: a journey into
  projection-free first-order optimization methods.
\newblock 4OR \textbf{19}(3), 313--345 (2021)

\bibitem{braun2022conditional}
Braun, G., Carderera, A., Combettes, C.W., Hassani, H., Karbasi, A., Mokhtari,
  A., Pokutta, S.: Conditional gradient methods.
\newblock arXiv preprint arXiv:2211.14103  (2022)

\bibitem{Chandrasekaran2012}
Chandrasekaran, V., Recht, B., Parrilo, P.A., Willsky, A.S.: The convex
  geometry of linear inverse problems.
\newblock Foundations of Computational mathematics \textbf{12}(6), 805--849
  (2012)

\bibitem{clarkson2010coresets}
Clarkson, K.L.: Coresets, sparse greedy approximation, and the frank-wolfe
  algorithm.
\newblock ACM Transactions on Algorithms (TALG) \textbf{6}(4), 1--30 (2010)

\bibitem{combettes2021complexity}
Combettes, C.W., Pokutta, S.: Complexity of linear minimization and projection
  on some sets.
\newblock Operations Research Letters \textbf{49}(4), 565--571 (2021)

\bibitem{condat2016fast}
Condat, L.: Fast projection onto the simplex and the l 1 ball.
\newblock Mathematical Programming \textbf{158}(1), 575--585 (2016)

\bibitem{damla2008linear}
Damla~Ahipasaoglu, S., Sun, P., Todd, M.J.: Linear convergence of a modified
  frank--wolfe algorithm for computing minimum-volume enclosing ellipsoids.
\newblock Optimisation Methods and Software \textbf{23}(1), 5--19 (2008)

\bibitem{frank1956algorithm}
Frank, M., Wolfe, P., et~al.: An algorithm for quadratic programming.
\newblock Naval research logistics quarterly \textbf{3}(1-2), 95--110 (1956)

\bibitem{freund2016new}
Freund, R.M., Grigas, P.: New analysis and results for the frank--wolfe method.
\newblock Mathematical Programming \textbf{155}(1), 199--230 (2016)

\bibitem{garber2016faster}
Garber, D.: Faster projection-free convex optimization over the spectrahedron.
\newblock Advances in Neural Information Processing Systems \textbf{29} (2016)

\bibitem{garber2013playing}
Garber, D., Hazan, E.: Playing non-linear games with linear oracles.
\newblock In: 2013 IEEE 54th annual symposium on foundations of computer
  science, pp. 420--428. IEEE (2013)

\bibitem{garber2016linearly}
Garber, D., Hazan, E.: A linearly convergent variant of the conditional
  gradient algorithm under strong convexity, with applications to online and
  stochastic optimization.
\newblock SIAM Journal on Optimization \textbf{26}(3), 1493--1528 (2016)

\bibitem{garber2021frank}
Garber, D., Wolf, N.: Frank-wolfe with a nearest extreme point oracle.
\newblock In: Conference on Learning Theory, pp. 2103--2132. PMLR (2021)

\bibitem{guelat1986some}
Gu{\'e}lat, J., Marcotte, P.: Some comments on wolfe's ‘away step’.
\newblock Mathematical Programming \textbf{35}(1), 110--119 (1986)

\bibitem{guyon2008feature}
Guyon, I., Gunn, S., Nikravesh, M., Zadeh, L.A.: Feature extraction:
  foundations and applications, vol. 207.
\newblock Springer (2008)

\bibitem{hazan2008sparse}
Hazan, E.: Sparse approximate solutions to semidefinite programs.
\newblock In: Latin American symposium on theoretical informatics, pp.
  306--316. Springer (2008)

\bibitem{jaggi2013revisiting}
Jaggi, M.: Revisiting frank-wolfe: Projection-free sparse convex optimization.
\newblock In: International conference on machine learning, pp. 427--435. PMLR
  (2013)

\bibitem{jaggi2010simple}
Jaggi, M., Sulovsk, M., et~al.: A simple algorithm for nuclear norm regularized
  problems.
\newblock In: Proceedings of the 27th international conference on machine
  learning (ICML-10), pp. 471--478 (2010)

\bibitem{joulin2014colocalization}
Joulin, A., Tang, K., Fei-Fei, L.: Efficient image and video co-localization
  with frank-wolfe algorithm.
\newblock In: D.~Fleet, T.~Pajdla, B.~Schiele, T.~Tuytelaars (eds.) Computer
  Vision -- ECCV 2014, pp. 253--268. Springer International Publishing, Cham
  (2014)

\bibitem{lacoste2015global}
Lacoste-Julien, S., Jaggi, M.: On the global linear convergence of frank-wolfe
  optimization variants.
\newblock Advances in neural information processing systems \textbf{28} (2015)

\bibitem{lan2013complexity}
Lan, G.: The complexity of large-scale convex programming under a linear
  optimization oracle.
\newblock arXiv preprint arXiv:1309.5550  (2013)

\bibitem{lan2020first}
Lan, G.: First-order and stochastic optimization methods for machine learning,
  vol.~1.
\newblock Springer (2020)

\bibitem{levitin1966constrained}
Levitin, E.S., Polyak, B.T.: Constrained minimization methods.
\newblock USSR Computational mathematics and mathematical physics
  \textbf{6}(5), 1--50 (1966)

\bibitem{pedregosa2020linearly}
Pedregosa, F., Negiar, G., Askari, A., Jaggi, M.: Linearly convergent
  frank-wolfe with backtracking line-search.
\newblock In: International conference on artificial intelligence and
  statistics, pp. 1--10. PMLR (2020)

\bibitem{pokutta2024frank}
Pokutta, S.: The frank-wolfe algorithm: a short introduction.
\newblock Jahresbericht der Deutschen Mathematiker-Vereinigung \textbf{126}(1),
  3--35 (2024)

\bibitem{rockafellar1997convex}
Rockafellar, R.T.: Convex analysis, vol.~11.
\newblock Princeton university press (1997)

\bibitem{vegh2012strongly}
V{\'e}gh, L.A.: Strongly polynomial algorithm for a class of minimum-cost flow
  problems with separable convex objectives.
\newblock In: Proceedings of the forty-fourth annual ACM symposium on Theory of
  computing, pp. 27--40 (2012)

\end{thebibliography}

% Non-BibTeX users please use
%\begin{thebibliography}{}
%%
%% and use \bibitem to create references. Consult the Instructions
%% for authors for reference list style.
%%
%\bibitem{RefJ}
%% Format for Journal Reference
%Author, Article title, Journal, Volume, page numbers (year)
%% Format for books
%\bibitem{RefB}
%Author, Book title, page numbers. Publisher, place (year)
%% etc
%\end{thebibliography}

\end{document}